\newtheorem{prop}{Proposition}[section]
\newtheorem{thm}[prop]{Theorem}
\newtheorem{cor}[prop]{Corollary}
\newtheorem{sch}[prop]{Scholium}
\newtheorem{quess}[prop]{Questions}
\newtheorem{lem}[prop]{Lemma}
\newtheorem{adden}[prop]{Addendum}
\newtheorem{claim}[prop]{Claim}
\theoremstyle{definition}
\newtheorem{de}[prop]{Definition}
\newtheorem{example}[prop]{Example}
\newtheorem{ob}[prop]{Observation}
\theoremstyle{remark}
\newtheorem{Remark}[prop]{Remark}             
\newtheorem{Remarks}[prop]{Remarks}             
\def\Z{{\mathbb Z}}
\def\R{{\mathbb R}}
\def\Q{{\mathbb Q}}
\def\W{{\mathcal W}}
\def\D{{\mathcal D}}
\def\E{{\mathcal E}}
\def\F{{\mathcal F}}
\def\L{{\mathcal L}}
\def\p{{\mathfrak p}}
\def\pr{{\mathcal P}}
\def\inter{\mathop{\rm int}\nolimits}
\def\dim{\mathop{\rm dim}\nolimits}
\def\rk{\mathop{\rm rank}\nolimits}
\def\id{\mathop{\rm id}\nolimits}
\def\GL{\mathop{\rm GL}\nolimits}
\def\SL{\mathop{\rm SL}\nolimits}
\def\PD{\mathop{\rm PD}\nolimits}
\def\co{\colon\thinspace}
\def\SO{{\rm SO}}
\def\ds{{\Delta_{\rm spin}}}
\def\Arf{{\rm Arf}}
\begin{document}
\title{Transverse tori in Engel manifolds}
\author{Robert E. Gompf}
\thanks{This work was inspired by the April 2017 conference ``Engel Structures'' at the American Institute of Mathematics. The author would like to thank Roger Casals, Gordana Mati\'c, Emmy Murphy and the other participants for helpful discussions. He also thanks the referee for reading carefully, suggesting numerous expositional improvements and inspiring a few sharper results.}
\address{The University of Texas at Austin}
\email{gompf@math.utexas.edu}
\begin{abstract} We show that tori in Engel 4-manifolds behave analogously to knots in contact 3-manifolds: Every torus with trivial normal bundle is isotopic to infinitely many distinct transverse tori, distinguished locally (and globally in the nullhomologous case) by their formal invariants. (Few examples of transverse tori were previously known.) We classify the formal invariants, which are richer than for transverse knots. We show that in an overtwisted Engel structure, up to homotopy through such structures, these invariants are a complete set of uniqueness obstructions, and every torus with trivial normal bundle can be made transverse realizing any combination of these invariants. Fixing Engel structures not known to be overtwisted, we explore the range of the primary invariants of given tori. A sample application is that many Engel manifolds admit infinitely many transverse homotopy classes of {\em unknotted} transverse tori such that each class contains infinitely many transverse isotopy classes.

\end{abstract}
\maketitle


\section{Introduction}\label{Intro}

Engel structures, which only exist on 4-manifolds, are poorly understood cousins of contact structures in odd dimensions. The latter have been extensively studied in recent decades, first in dimension 3 and then in higher dimensions, and have been shown to be intimately related to the topology of the underlying manifolds. For example, contact topology was instrumental in proving the notorious Property~P Conjecture for 3-manifolds \cite{KM}. One of the main tools for studying contact 3-manifolds is the notion of a {\em transverse knot}. For example, these can be used to distinguish the crucial {\em tight} contact structures from the less interesting {\em overtwisted} structures. The most fundamental open problem in Engel topology is whether analogous tight Engel structures exist. If so, it seems reasonable to expect that they should become a powerful tool for studying 4-manifolds. With this background in 2017, an AIM conference on Engel structures was held, at which Eliashberg asked, in light of transverse knots in contact 3-manifolds, what could be said about transverse tori in Engel manifolds. The present article shows that transverse tori behave analogously to transverse knots in contact 3-manifolds, although the Engel case is richer in some aspects than the contact case.

\subsection{Background} Engel structures naturally emerge from Cartan's study of $k$-plane fields in $n$-manifolds \cite{C}. A modern exposition appears in \cite{P}; we review the most relevant parts in Section~\ref{Background}. Cartan classified all {\em topologically stable} subsets of the space of such distributions. By definition, these are open subsets consisting of distributions without local invariants. That is, they are all described by the same local model at every point. By a dimension count, there cannot be enough diffeomorphisms locally to generate an open set in the space of distributions unless $k$ or $n-k$ is small. The most obvious case, $k=1$, is the line fields on any manifold. When $n-k=1$, the topologically stable hyperplane fields are the contact structures ($n$ odd) and even-contact structures ($n$ even). The only remaining case, $k=n-k=2$, is the Engel structures. These structures are all characterized as being ``maximally nonintegrable'' in a sense discussed in Section~\ref{Background}.

Much of the power of contact topology comes from the tight/overtwisted dichotomy. Overtwisted contact structures satisfy the {\em h-Principle} ({\em Homotopy Principle}) of Eliashberg and Gromov \cite{EM}, \cite{Gr}. That is, there is a unique homotopy class of overtwisted contact structures within each homotopy class of hyperplane fields endowed with suitable auxiliary data (that is vacuous for oriented 3-manifolds) \cite{E}, \cite{BEM}. Thus, their classification is a problem in algebraic topology. In contrast, tight contact structures appear sporadically, depending in a delicate way on the topology of the underlying manifold. Even-contact structures are considered less interesting since McDuff showed they all obey the h-Principle \cite{McD}. That is, there are no tight even-contact structures. There are recently developed notions of overtwisted \cite{PV} and {\em loose} \cite{CPP} Engel structures, both of which satisfy the h-Principle. However, the relation between these notions is not yet known, and the fundamental question remains of whether there are additional homotopy classes of ``tight'' Engel structures.

Tight contact structures on 3-manifolds are detected by which {\em transverse knots} they contain. These are defined to be circles embedded transversely to the contact planes. Immersed transverse circles satisfy the h-Principle, with each homotopy class containing a unique transverse knot up to homotopy through transverse immersions. However, the h-Principle fails for (embedded) transverse knots, due to a ``formal'' invariant determined by the underlying bundle theory. In fact, tight contact structures are characterized by the failure of nullhomologous knots to transversely realize large values of this invariant (and realization need not be unique). To similarly understand Engel structures, it is natural to look for closed surfaces transverse to an Engel plane field. It is not hard to see (Section~\ref{TransSurf}) that any such surface must be a torus with trivial normal bundle. (We assume throughout the paper that everything is orientable; otherwise transverse Klein bottles can exist, Remark~\ref{Klein}(a).) Transverse {\em immersions} are again classified by the h-Principle \cite[Theorem~31]{PP} (Theorem~4 in the arXiv version). However, unlike circles in 3-manifolds, immersed surfaces in 4-manifolds cannot usually be perturbed to embeddings, making the existence question for transverse embeddings more difficult. In fact, only a few examples of transverse tori were previously known \cite{PV}.

\subsection{Results} Our first theorem, proved in Section~\ref{MainProof}, completely solves the existence problem in 4-manifolds with (oriented) Engel structures.

\begin{thm}\label{main0}
A closed surface embedded in an Engel manifold is isotopic to a transverse surface if and only if it is a torus with trivial normal bundle. If so, the isotopy can be assumed $C^0$-small.
\end{thm}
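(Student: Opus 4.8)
Here is the approach I would take; the ``only if'' direction is quick and the ``if'' direction is the substance.

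For ``only if'', suppose the closed surface $\Sigma\subset M$ is transverse to the Engel plane field $\D$. Since the characteristic line field $\W$ is contained in $\D$, the normal bundle $\nu\Sigma\cong\D|_\Sigma$ inherits the nowhere-zero section $\W|_\Sigma$; an oriented rank-$2$ bundle with a nowhere-zero section is trivial, so $\nu\Sigma$ is trivial. Also $T\Sigma+\D=TM$ along $\Sigma$ and $\D\subset\E$, so $\Sigma$ is transverse to the even-contact structure $\E$, whence $T\Sigma\cap\E$ is a nonsingular line field on $\Sigma$; therefore $\chi(\Sigma)=0$, and by orientability $\Sigma$ is a torus. (The nonorientable exception would be a Klein bottle.)

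For ``if'', let $\Sigma\subset M$ be an embedded torus with trivial normal bundle. I would first make a $C^1$-small isotopy putting $\Sigma$ in general position relative to the flag $\W\subset\D\subset\E$, so that afterward $\Sigma$ is transverse to $\W$ and to $\E$ and the tangency locus mentioned below is a smooth curve. Transversality to $\W$ lets us pass to the quotient by the characteristic direction: with $\pi\co\E\to\bar\E:=\E/\W$ the projection (a rank-$2$ bundle), set $\bar\D:=\D/\W$, a line field inside $\bar\E$, and $\bar\ell:=\pi(T\Sigma\cap\E)$, the characteristic foliation of $\Sigma$ viewed inside $\bar\E$ (well defined since $T\Sigma\cap\W=0$). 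A short chase of the flag shows that, given this, $\Sigma$ is transverse to $\D$ exactly when $\bar\ell$ and $\bar\D$ are everywhere distinct as sections of the $S^1$-bundle $\P(\bar\E)|_\Sigma\to\Sigma$. Because $\bar\D|_\Sigma$ is a section, that circle bundle is trivial, so $\bar\ell$ and $\bar\D$ represent classes in $H^1(\Sigma;\Z)$, and they can be made disjoint by a homotopy of $\bar\ell$ precisely when those classes agree. The class of $\bar\D|_\Sigma$ is an isotopy invariant of $\Sigma$; the class of $\bar\ell$ depends on the $1$-jet of the embedding and, crucially, can be changed by $C^0$-small but $C^1$-large perturbations.

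The heart of the matter, and where I expect the real difficulty, is to realize the required change of $[\bar\ell]$ by a $C^0$-small \emph{embedded} isotopy. Fixing a tubular neighborhood $\Sigma\times D^2$ (which exists since $\nu\Sigma$ is trivial) and an embedded curve $\gamma\subset\Sigma$ representing a chosen class in $H_1(\Sigma;\Z)$, I would insert a tight ``corrugation'' of $\Sigma$ near $\gamma$ in a direction transverse to $\E$ but lying in $\nu\Sigma$: a $C^0$-small wave of large $C^1$-norm, confined to a thin sub-tube, and hence still embedded because it is graphical over $\Sigma$ in the corrugation direction. The non-integrability of $\E$---equivalently, the monotone turning of $\bar\D$ along the $\W$-leaves, which is exactly the Engel condition---forces $\bar\ell$ to be dragged once around the $\P(\bar\E)$-fiber over $\gamma$ as the wave runs through one full period; thus each such corrugation changes $[\bar\ell]$ by $\pm[\gamma]$ while moving $\Sigma$ so little that $[\bar\D|_\Sigma]$ and transversality to $\W$ and $\E$ are unharmed. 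Running corrugations along curves representing a basis of $H_1(\Sigma;\Z)$ then brings $[\bar\ell]$ to $[\bar\D|_\Sigma]$, and a final $C^1$-small adjustment makes $\bar\ell$ fiberwise opposite to $\bar\D$ and so disjoint from it; at that stage $\Sigma$ is transverse to $\D$, and since every move was $C^0$-small the isotopy can be taken $C^0$-small. I expect the technical hurdles to be: pinning down the exact effect of a corrugation on the winding of the characteristic foliation, the bookkeeping that keeps all perturbations embedded and small, and isolating where the Engel (non-integrability) condition is genuinely used---without it there is no mechanism to turn $\bar\ell$, paralleling why the analogous flexibility moves in dimension three require a bona fide contact structure.
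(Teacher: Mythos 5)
The ``only if'' direction is fine and essentially matches the paper. Your reduction in the ``if'' direction --- viewing $\bar\ell$ and $\bar\D$ as sections of a trivial circle bundle over $\Sigma$ and identifying the primary obstruction with their difference class in $H^1(\Sigma)$ --- is also the right starting point and corresponds to the map $\psi_\L$ and the class $\psi_\L^*[S^1]$ used in the paper's proof of Theorem~\ref{main1}. However, your proposal has a wrong step at the outset and a genuine gap at the end. First, a $C^1$-small (i.e.\ generic) isotopy does \emph{not} make $\Sigma$ transverse to $\W$ or to $\E$: for generic $\Sigma$, $\W$-tangencies and $\E$-tangencies are isolated points whose signed count is a characteristic number, and a generic small perturbation merely perturbs their positions. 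Eliminating canceling pairs requires a $C^0$-small but $C^1$-large move; the paper does this for $\W$-tangencies via a Whitney-trick-type cancellation (Lemma~\ref{noW}), and then removes $\E$-tangencies by first making $\Sigma$ convex and using folding and bypass moves to force a nonsingular (``simple'') characteristic foliation (Corollary~\ref{standardize}). Neither step is $C^1$-small.

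The more serious gap is the ``final $C^1$-small adjustment'' you invoke after matching the classes $[\bar\ell]=[\bar\D]$. The characteristic line field $\bar\ell$ is determined by the embedding of $\Sigma$; you cannot realize an arbitrary homotopy of it by a small isotopy of the surface. (This is the same obstruction that makes Giroux's Flexibility Theorem nontrivial in the contact case, and the Engel case is harder because the ambient contact $3$-manifold is only a germ, so one has very little room to move in the $\W$-direction --- the paper explicitly notes this and must change $T\Sigma$ rather than $\L$.) The paper does not try to make $\bar\ell$ and $\bar\D$ disjoint directly. Instead it makes $\bar\ell=\bar\D$ \emph{everywhere} --- i.e.\ $\Sigma$ Legendrian --- via a two-stage construction: (a) simplify the tangency locus $\Lambda=\{\bar\ell=\bar\D\}$ to a union of parallel circles transverse to the foliation by ``transverse twists,'' and (b) replace each complementary annulus by a ``pleated curtain'' built from Legendrian zigzag arcs (Lemma~\ref{zigzag}), introducing circles of $\W$-tangencies along the creases. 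It then applies the transverse pushoff (Proposition~\ref{push}), which uses the Engel non-integrability $[\D,\D]=\E$ to push the Legendrian torus off itself to a nearby transverse one. This Legendrian-then-pushoff strategy is the key idea that sidesteps the question of exactly which nonsingular foliations can be realized; it is absent from your proposal, and I do not see how to complete the argument along the direct route you sketch. Your corrugation heuristic for changing $[\bar\ell]$ has the right flavor (the paper does something concrete and controlled: a small Type~I Reidemeister move on the Lagrangian projection near a transverse circle $C$, producing a change of $\pm\PD[C]$), but it would need to be made precise, and by itself it only addresses the homological obstruction, not the geometric realization.
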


\noindent Our main technique is to reduce to contact topology, which inspires our methods in multiple ways. For example, the notion of transverse pushoffs of Legendrian knots adapts to tori (Section~\ref{Surface}, Proposition~\ref{push}).

This theorem raises the dual question of enumerating transverse representatives of an isotopy class, up to isotopy through transverse embeddings, i.e., {\em transverse isotopy}. For comparison, in a contact plane field with vanishing Euler number, nullhomologous transverse knots can be distinguished by their {\em self-linking number} in $\Z$, their unique formal transverse isotopy invariant. (More subtle invariants are beyond the scope of this paper.) Every such knot is $C^0$-small isotopic to infinitely many transverse knots, distinguished by their self-linking numbers. For homologically essential knots, the same applies in a tubular neighborhood, but not globally in general. For nullhomologous knots with overtwisted complements, the self-linking number classifies transverse representatives and takes all odd values. But in the tight case, both of these statements fail.

We analyze the formal invariants of transverse tori in Section~\ref{Distinguishing} and apply them in Section~\ref{Examples}. These invariants behave analogously to the self-linking number in many ways, but are also richer. A version of the invariant directly analogous to the self-linking number has been used by Kegel~\cite{K} to distinguish certain nullhomologous transverse tori in Engel manifolds (see Example~\ref{PV}(a)). However, transverse tori also have other formal invariants. We classify these in Section~\ref{Classification} (Theorem~\ref{formal}). Informally, we have: 

\begin{claim} The formal invariants of a transverse torus $\Sigma$ consist of a pair of classes in $H^1(\Sigma)$ and a pair of secondary invariants in $\Z$.
\end{claim}

 Using the h-Principle for overtwisted Engel structures \cite{PV}, we show in Section~\ref{Overtwisted} that in the overtwisted setting, transverse tori are flexible in a similar sense to transverse knots. Unlike for contact structures, however, a compactly supported homotopy through Engel structures need not be realized by an isotopy of the manifold. Thus, to realize this flexibility, we must allow the Engel structure to vary by homotopy. We informally summarize here the results given more precisely in that section:

\begin{claim} In overtwisted Engel structures up to homotopy: Every torus with trivial normal bundle can be made transverse, and then modified by any combination of the formal invariants. An isotopy between fixed transverse tori can be assumed to be a transverse isotopy if and only if its formal invariants vanish.
\end{claim}

In contrast, the formal invariants seem less flexible when the Engel structure is held fixed. This seems especially interesting in Engel structures not known to be overtwisted (i.e. potentially tight structures). We analyze the primary invariants in the last two sections, beginning with the analogue $\Delta_\nu\in H^1(\Sigma)$ of the self-linking number in Section~\ref{Dnu}. One application is a general nonuniqueness theorem reminiscent of transverse knots:

\begin{thm}\label{many}
Let $\Sigma$ be a torus with trivial normal bundle in an Engel manifold $M$. Then
\item[a)] After a $C^0$-small isotopy, there is a neighborhood $U$ in which $\Sigma$ is $C^0$-small isotopic to infinitely many transverse tori, no two of which are transversely isotopic in $U$.
\item[b)] If $[\Sigma]$ vanishes in $H_2(M)$ then no two of these tori are transversely isotopic in $M$.
\end{thm}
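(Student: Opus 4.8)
The plan is to carry over to transverse tori the classical fact that a transverse knot in a contact $3$-manifold has infinitely many transverse representatives, distinguished by self-linking number and produced by iterated stabilization; here $\Delta_\nu$ plays the role of the self-linking number (there are further formal invariants, but only $\Delta_\nu$ is needed). By Theorem~\ref{main0}, a $C^0$-small isotopy makes $\Sigma$ transverse; fix a tubular neighborhood $U\cong\Sigma\times D^2$, which in particular supplies a product trivialization of the normal bundle $\nu$ of $\Sigma$. Since $\Sigma$ is transverse, $\D$ is everywhere complementary to $T\Sigma$ along $\Sigma$, so projection identifies $\D|_\Sigma$ with $\nu$; under this identification the Engel line field $\W$ restricts to a line subfield $\W|_\Sigma\subset\nu\cong\underline{\R}^{2}$, and $\Delta_\nu\in H^1(\Sigma)$ records its rotation relative to the product framing. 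All modifications below stay inside $U$, where I would use the local model for an Engel structure near a transverse torus supplied by the earlier sections.

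The heart of the argument is a ``transverse stabilization'' move, the surface analogue of inserting a zigzag into a transverse knot inside a Darboux ball. Choose an embedded annulus $A=S^1\times I\subset\Sigma$ whose core represents a primitive class, with $\alpha\in H^1(\Sigma)$ dual to it. Over a collar of $A\times\{0\}$, and using the $D^2$-directions of $U$, I would replace the corresponding sheet of $\Sigma$ by one that makes an extra full turn in the normal directions as one traverses the $S^1$-factor, arranged (like the zigzag, this is $C^0$-small but not $C^1$-small, and stays inside the local model) so that the result $\Sigma'$ is again an embedded torus transverse to $\D$ and $C^0$-close to $\Sigma$. A direct count of the rotation of $\W|_{\Sigma'}$ then gives $\Delta_\nu(\Sigma')=\Delta_\nu(\Sigma)+\alpha$. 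Iterating $n$ times yields transverse tori $\Sigma_n\subset U$, each $C^0$-small isotopic to $\Sigma$ in $U$, with $\Delta_\nu(\Sigma_n)=\Delta_\nu(\Sigma)+n\alpha$, and these values are pairwise distinct.

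Because $\Delta_\nu$ is a formal invariant it is unchanged under transverse isotopy, and within $U$ --- carrying along its product trivialization of $\nu$ --- it is a genuine invariant of transverse tori isotopic to $\Sigma$; hence no two of the $\Sigma_n$ are transversely isotopic in $U$, which is~(a). For~(b), the vanishing of $[\Sigma]$ in $H_2(M)$ pins down a reference framing of $\nu$ (this is the point of the analysis of $\Delta_\nu$ in Section~\ref{Dnu}), making $\Delta_\nu$ an absolute invariant of transverse tori in the isotopy class of $\Sigma$ in all of $M$, up to an ambiguity shared by every such representative; the differences $\Delta_\nu(\Sigma_n)-\Delta_\nu(\Sigma_m)=(n-m)\alpha$ are therefore well-defined transverse-isotopy invariants, nonzero for $n\ne m$, so no two $\Sigma_n$ are transversely isotopic in $M$.

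I expect the main obstacle to be the stabilization construction itself: a surface in a $4$-manifold, unlike a knot in a $3$-manifold, cannot be perturbed freely to recover embeddedness or transversality, so the spiraling sheet must be written down explicitly and checked, all at once, to be embedded, transverse to $\D$, $C^0$-small, and to shift $\Delta_\nu$ by exactly $\alpha$. A smaller point requiring care is the claim used in~(b) that the nullhomologous hypothesis constrains the reference framing enough for $\Delta_\nu$ (or at least its differences) to descend to an invariant over all of $M$.
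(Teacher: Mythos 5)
Your strategy is the right one in outline — stabilize to shift $\Delta_\nu$ and then use it as a transverse-isotopy invariant, which is indeed what the paper does — but there is a genuine gap in the step where you conclude non-equivalence, and it is exactly the subtlety the paper flags in its introduction and handles via divisibility.

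The gap is the reparametrization issue. A transverse isotopy of embedded tori need not be compatible with a fixed identification of the domain with $\Sigma$: a transverse isotopy from $\Sigma_m$ to $\Sigma_n$ might induce a nontrivial automorphism of $H^1(\Sigma)$, under which $\Delta_\nu(\Sigma_m)$ is carried to $\Delta_\nu(\Sigma_n)$ without these classes being \emph{equal}. The paper makes this explicit (see the discussion after Theorem~\ref{many} in the Introduction, and Example~\ref{PV}(d), where a self-isotopy of an unknotted torus changes $\Delta_\nu$ by an arbitrary even class); the honest invariant is the \emph{divisibility} of $\Delta_\nu$, which survives the $\mathrm{Aut}(H^1(\Sigma))$-action. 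Your family $\Delta_\nu(\Sigma_n)=\Delta_\nu(\Sigma)+n\alpha$ runs along a single line in $H^1(\Sigma)$, and those classes need not have pairwise distinct divisibilities: if $\Delta_\nu(\Sigma)$ is not proportional to $\alpha$, the divisibilities $\gcd$ cycle and take only finitely many values. So ``nonzero difference'' does not imply ``not transversely isotopic'' in the sense the theorem asserts, and your part~(b) conclusion as written does not follow. The paper's proof instead invokes Theorem~\ref{cofinite}, which realizes all but finitely many classes in $2H^1(\Sigma)$ as $D_\nu$ — a two-parameter, not one-parameter, family — so one can pick representatives with arbitrarily large divisibility, and then Corollary~\ref{wellDef} (divisibility is a transverse isotopy invariant independent of parametrization) finishes part~(b).

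Two smaller points. First, stabilization must change $\Delta_\nu$ by $2\alpha$, not $\alpha$: the mod-$2$ residue $\Delta_T+\Delta_\nu\equiv\Delta_{\rm spin}$ is fixed, and $\Delta_T$ is not changing, so the shift in $\Delta_\nu$ is forced to be even (this matches the factor of $2$ in Lemma~\ref{dnu}, and the fact that stabilizing a transverse knot changes self-linking by $2$). Second, for part~(a) your plan is to define $\Delta_\nu$ in $U\cong\Sigma\times D^2$ against the product framing; but $[\Sigma]\neq0$ in $H_2(\Sigma\times D^2)$, so there is no canonical Seifert-solid framing, and the product framing is not transported canonically under isotopies in $U$ (two isotopies from $\Sigma$ to the same $\Sigma'$ in $U$ can induce different framings on $\nu\Sigma'$). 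The paper avoids this entirely: it isotopes $\Sigma$ off the $\W$-tangencies (Proposition~\ref{noW}), identifies a neighborhood of $\Sigma$ with an open subset of a prolongation $P\approx S^1\times\R^3$ via the germ of a $\W$-transverse $\R^3$ and Eliashberg's extension theorem, and then uses $H_2(P)=0$ so that part~(b) applies inside $P$ (hence inside $U$). If you want to repair your part~(a) without this detour, you would at minimum need to pin down a canonical framing of $\nu$ in $U$ and verify that it is carried by every isotopy within $U$; that is precisely the role the Seifert-solid framing plays, and the product framing does not obviously substitute for it.
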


With embedded surfaces, we encounter a subtlety that does not arise for oriented circles: There may be an isotopy that sends $\Sigma$ onto itself in a way that is nontrivial on $H^1(\Sigma)$. This reparametrizes $\Sigma$ so that $\Delta_\nu$ may appear different, even though the image surface is unchanged. (Such behavior already occurs for unknotted tori, Example~\ref{PV}(d).) In the above theorem, and elsewhere unless otherwise specified, we mean that the surfaces are not transversely isotopic for any choices of parametrization. This follows by using the divisibility of $\Delta_\nu$, which is preserved by automorphisms of $H^1(\Sigma)$. In spite of this ambiguity, the difference $D_\nu(F)$ of two values of $\Delta_\nu$ is still well-defined and useful for a fixed isotopy $F$, and vanishes when the isotopy is transverse. In this sense, $\Delta_\nu$ is a transverse isotopy invariant.

By construction, the tori arising from a given $\Sigma$ in the above theorem are all {\em transversely homotopic}, that is, homotopic through immersed transverse surfaces. In particular, $\Delta_\nu$ is not a transverse homotopy invariant, nor is its divisibility. This reflects the corresponding failure of the self-linking number of transverse knots in contact 3-manifolds, for which homotopy implies transverse homotopy. In contrast, the other primary invariant $\Delta_T\in H^1(\Sigma)$ {\em is} a transverse homotopy invariant. Unlike $\Delta_\nu$, this is well-defined even for homologically essential transverse tori, and it has no analogue for transverse knots. We study the range of this invariant in Section~\ref{DT}. We find that even unknotted transverse tori (isotopic to $S^1\times S^1$ in some $\R^2\times\R^2$ chart) need not be transversely homotopic:

\begin{thm}\label{nonhom}
Every circle bundle with even Euler class over a 3-manifold has a compatible Engel structure admitting infinitely many transverse homotopy classes of unknotted transverse tori such that each class contains infinitely many transverse isotopy classes.
\end{thm}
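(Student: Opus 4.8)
The plan is to assemble the statement from three ingredients: a construction of a compatible Engel structure on $M$ by Cartan prolongation, an explicit family of unknotted transverse tori on which the primary homotopy invariant $\Delta_T$ realizes infinitely many divisibilities, and Theorem~\ref{many}(b) to split each of the resulting transverse homotopy classes into infinitely many transverse isotopy classes. For the first ingredient, since every even class in $H^2$ of the base $N$ is the Euler class of an (automatically cooriented) plane field, and every plane field on a $3$-manifold is homotopic to a contact structure by Lutz and Martinet, I can choose a contact structure $\xi$ on $N$ with $e(\xi)=e(M)$. The Cartan prolongation of $(N,\xi)$, formed over the bundle of oriented directions in $\xi$, is then an Engel structure $\D$ on a circle bundle of Euler class $e(\xi)=e(M)$ --- hence on $M$ itself --- whose kernel line field $\W$ is tangent to the fibers: a compatible Engel structure. (The parity hypothesis is exactly what this requires, since a plane field on an oriented $3$-manifold necessarily has even Euler class.)

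The substance of the proof is the production of the unknotted transverse tori. Working over a Darboux ball of $(N,\xi)$, where $M$ is the standard prolongation and $\W=\langle\partial_\theta\rangle$, I would begin with a basic transverse torus $\Sigma_0$ contained in a $4$-ball, arranged so that $\Delta_T(\Sigma_0)=0$, and then apply, $n$ times, a local ``stabilization'' move --- the transverse-torus analogue of stabilizing a transverse knot --- each supported in a further small $4$-ball. Since each move changes the previous torus only up to ambient isotopy inside a ball, every $\Sigma_n$ is again an \emph{unknotted} transverse torus. Using the description of $\Delta_T$ from Section~\ref{DT}, one checks that each stabilization alters $\Delta_T$ by a fixed nonzero class $\delta\in H^1(\Sigma)$ --- morally because it increases by one the winding of $T\Sigma_n$ against the even-contact flag $\W\subset\D\subset\E$ around the fiber direction --- so that $\Delta_T(\Sigma_n)=n\delta$ has divisibility a nonzero multiple of $n$. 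As $\Delta_T$ is a transverse homotopy invariant and its divisibility is unchanged by reparametrization, the tori $\Sigma_n$ represent infinitely many distinct transverse homotopy classes of unknotted transverse tori. The delicate point --- and the principal obstacle in the whole argument --- is to define the stabilization so that it preserves transversality (a somewhat rigid condition along a torus in the prolongation model) and unknottedness simultaneously, and then to compute its effect on $\Delta_T$ precisely enough to see that infinitely many divisibilities occur.

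Finally, each $\Sigma_n$ lies in a $4$-ball, hence is nullhomologous in $M$, so Theorem~\ref{many}(b) applies: after a $C^0$-small isotopy there is a neighborhood in which $\Sigma_n$ is isotopic to infinitely many transverse tori, no two of which are transversely isotopic in $M$. These tori are ambiently isotopic to $\Sigma_n$, hence still unknotted, and by the construction underlying Theorem~\ref{many} they are all transversely homotopic to $\Sigma_n$. Therefore each of the infinitely many transverse homotopy classes $[\Sigma_n]$ contains infinitely many transverse isotopy classes, which is precisely the assertion.
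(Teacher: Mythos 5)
Your outline correctly identifies the architecture (build the Engel structure by prolonging a contact structure in the right homotopy class of plane fields; produce unknotted transverse tori realizing infinitely many values of $\Delta_T$; then split each transverse homotopy class using $\Delta_\nu$), but the central step is not just incomplete --- it is impossible as stated, and is essentially the author's open Question~\ref{ques}(a). You propose a ``stabilization'' move, supported in a small $4$-ball, that increases $\Delta_T$ by a fixed nonzero class $\delta$. An isotopy supported in a small $4$-ball changes $\Sigma$ only on a disk $D$, fixes it near $\partial D$, and therefore changes the characteristic line field $\E\cap T\Sigma$ only on $D$. Since $\Delta_T$ is measured by this line field against the canonical framing over the $1$-skeleton of $\Sigma$, and the $1$-skeleton can be chosen to avoid $D$, such a move cannot change $\Delta_T$ at all. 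The paper's actual mechanism for varying $\Delta_T$ (Lemma~\ref{dt}, via Scholium~\ref{Reeb}) inserts Reeb components into the characteristic foliation of an $\E$-transverse Legendrian torus, a move whose effect is localized near a closed leaf but is global on the torus; and the unknottedness of the resulting tori is achieved not by keeping them in a $4$-ball, but via Theorem~\ref{prolong}: one chooses the contact structure $\xi$ carefully (homotoping the given plane field to a contact structure by Eliashberg, so $\xi$ is typically overtwisted), chooses a line field $\L$ in $\xi$ over the solid torus $N_0$ so that $\Sigma$ lifts as the boundary of a Seifert solid $\hat N_0$, and then a mod~$2$ relative Stiefel--Whitney computation forces the characteristic foliation of $\Sigma$ to acquire Reeb components, hence $\Delta_T\ne0$. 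Nothing about this is local in a Darboux chart, and whether it can be achieved in a thin chart $M_r$ is explicitly stated to be unknown.

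A second, smaller problem is the final step. You invoke Theorem~\ref{many}(b) to split each class $[\Sigma_n]$, asserting the resulting tori are transversely homotopic to $\Sigma_n$. But Theorem~\ref{many} (via Theorem~\ref{cofinite}) re-transversalizes $\Sigma_n$ to a simple foliation, producing tori with $\Delta_T=0$; if $\Delta_T(\Sigma_n)=n\delta\ne0$ these are \emph{not} transversely homotopic to $\Sigma_n$. The paper instead applies Lemma~\ref{dt} (whose second clause varies $D_\nu$ while fixing $\Delta_T$) and then needs a separate argument, via Corollary~\ref{wellDef} and the nondegenerate pairing of $\PD[C]$ with the annihilator $A$, to conclude that the divisibility of $\Delta_\nu$ actually distinguishes transverse isotopy classes. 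So even granting the desired family $\Sigma_n$, the last paragraph as written does not yield tori in the same transverse homotopy class.
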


\noindent These Engel manifolds arise by {\em prolongation} (Section~\ref{Engel}) of (typically overtwisted) contact structures chosen from any homotopy class of plane fields on any 3-manifold. The theorem follows from Theorem~\ref{prolong}, which provides a plethora of knot types of tori in prolongations satisfying the analogous conclusion.

\begin{Remark}\label{C0small}
While Theorems~\ref{main0} and \ref{many} are analogous to the behavior of knots in contact 3-manifolds, Theorem~\ref{nonhom} has no such analogue since homotopy implies transverse homotopy for transverse knots. We can alternatively compare these theorems with the more restricted behavior of surfaces in contact 3-manifolds. In that setting, important moves such as the bypass operation (Section~\ref{BypassCont}) cannot be made $C^0$-small. That might suggest (Remark~\ref{notC0}) simplifying the current paper by dropping the $C^0$-small conclusions and just working in a small neighborhood of $\Sigma$ (although we still seem to need the natural $C^0$-small conclusion in Lemma~\ref{zigzag} arising from making knots Legendrian, Proposition~\ref{helices}). However, the analogy with knots makes the stronger statements seem most natural and worth the few extra technicalities. As an example of the extra freedom arising from the fourth dimension, Theorem~\ref{main0} relies on an Engel version of the bypass move (Section~\ref{BypassSub}) that is both $C^0$-small and more flexible than the contact version.
\end{Remark}

\subsection{Future directions}\label{Future} The tools developed in this paper have potential application to the problem of recognizing tight Engel structures (if any exist). This is more subtle than the corresponding problem for closed contact manifolds, since it is not clear whether overtwistedness is preserved by homotopy through Engel structures. Thus, while each component $\mathfrak E$ of the space of formal Engel structures on $M$ contains a unique component of overtwisted Engel structures \cite{PV}, this may lie in a strictly larger component of the subset of all Engel structures in $\mathfrak E$. We would ideally like to find other components of the latter, but a preliminary step would be to recognize any Engel structure in $\mathfrak E$ that is not overtwisted. In the contact setting, this can be done via restrictions on the self-linking of transverse knots, suggesting an analogous approach for Engel structures. While this paper realizes many pairs $(\Delta_T,\Delta_\nu)$ for fixed isotopy classes of tori in fixed Engel structures, various gaps remain. In addition, we have no concrete examples exploiting the secondary formal invariants. For an Engel structure in $\mathfrak E$, one could hope to find a torus that cannot be made transverse with certain values of the formal invariants, while these values are realized in all overtwisted Engel structures in $\mathfrak E$. (We show that each value is realized by {\em some} such overtwisted structure in Section~\ref{Overtwisted}.) Observation~\ref{tight} gives a natural nested family $M_r$, $0<r\le\infty$, of Engel manifolds diffeomorphic to $\R^4$ such that every Engel manifold contains all bounded regions of $M_r$ for all sufficiently small $r$. Thus, if every $M_r$ is overtwisted then all Engel manifolds are, making these the most likely candidates for tight Engel manifolds. This suggests the utility of studying transverse torus theory in $\R^4$, analogously to transverse knot theory in $\R^3$. The gaps in results of this paper suggest (for example) the following questions:

\begin{quess}\label{ques}
a) In $M_r$, can an unknotted torus be transverse with $\Delta_T\ne0$? What about knotted tori? Do the answers depend on $r$?

\item[b)] In a fixed Engel manifold, does every family of isotopic tori with well-defined values of $\Delta_\nu\in H^1(\Sigma)$ (Section~\ref{DeltaNu}) have a class $\delta$ for which each pair $(\Delta_T,\Delta_\nu+\delta)$ is linearly dependent (cf.~Example~\ref{vert})?

\item[c)] Is there a pair of transverse tori in $M_r$ (or any Engel manifold) that are not transversely isotopic but are related by an isotopy $F$ with vanishing formal invariants (or just with the same $\Delta_T$, and $D_\nu(F)=0$)?
\end{quess}

\subsection{Organization and conventions} After reviewing the additional necessary background on contact and Engel topology in Section~\ref{Background}, we prove Theorem~\ref{main0} in Section~\ref{Main}, making a given torus transverse by isotopy. Section~\ref{Distinguishing} defines and classifies the formal invariants of transverse tori. Finally, Section~\ref{Examples} explores the range of the invariants, proves Theorems~\ref{many} and \ref{nonhom}, and explicitly computes the primary invariants in some examples. Sections ~\ref{Distinguishing} and \ref{Examples} can be read independently of Section~\ref{Main}, and proofs of the latter two theorems and the analysis of the primary invariants (Sections~\ref{Dnu} and \ref{DT}) can be read without Sections~\ref{Classification}, \ref{Mod2} and \ref{Overtwisted}.

We use the following conventions throughout the paper, except where otherwise indicated: Homology and cohomology have integral coefficients, with $\PD$ denoting Poincar\'e duality or its inverse. We work in the category of smooth, connected manifolds. Curves and surfaces are assumed to be closed (and the latter are often tori). Other manifolds are allowed to be noncompact but (for simplicity) without boundary. Manifolds and distributions on them are assumed to be oriented, compatibly. The exact meaning of compatibility is only needed for some signs in Sections~\ref{Dnu} and \ref{DT}. However, for future research it seems important to specify the meaning in a way that is optimally compatible with the standard conventions of smooth and contact topology, so we discuss the details carefully in Section~\ref{Orientation}.


\section{Further background}\label{Background}

This section reviews various ways of visualizing and manipulating contact 3-manifolds, Engel 4-manifolds and their submanifolds, as well as presenting Lemma~\ref{zigzag} and Addendum~\ref{C0} for later use, and establishing natural orientation conventions for Engel topology. To begin, we must understand the meaning of ``maximal nonintegrability'' that characterizes topologically stable distributions. We consider hyperplane fields here and Engel 2-plane fields in Section~\ref{Engel}. For the former, maximal nonintegrability means that the Lie bracket operation $[u,v]$ on vector fields is maximally nondegenerate on the hyperplane field:  Our orientability hypotheses guarantee that every hyperplane field in a manifold is the kernel of some 1-form $\alpha$, unique up to scale. The standard formula
$$d\alpha(u,v)=u\alpha(v)-v\alpha(u)-\alpha[u,v]$$
implies that when $u$ and $v$ are vector fields in $\ker\alpha$, $d\alpha(u,v)=-\alpha[u,v]=\alpha[v,u]$. This interprets the normal component of the Lie bracket on $\ker\alpha$ as a pointwise, bilinear form. Maximal nonintegrability of a contact or even-contact structure is then maximal nondegeneracy of $d\alpha|\ker\alpha$. For contact structures, $\ker\alpha$ has even dimension, so this means $d\alpha|\ker\alpha$ is symplectic. On a 3-manifold $N$, this just says $d\alpha$ is never 0 on the 2-planes $\xi$, so it is an area form on them that we always assume is positive. Equivalently, $\alpha\wedge d\alpha$ is a positive volume form on $N$. For an even-contact structure $\E$,  the hyperplanes have odd dimension, so maximal nondegeneracy means there is a canonical line field $\W$ in $\E$ such that $d\alpha$ descends to a well-defined symplectic form on $\E/\W$. On a 4-manifold $M$, this occurs whenever $d\alpha|\E$ is never 0. In terms of Lie brackets, nonintegrability on 3- and 4-manifolds is given by the conditions $[\xi,\xi]=TN$ and $[\E,\E]=TM$, respectively. For even-contact structures of any dimension, $\W$ is equivalently characterized by the condition $[\W,\E]\subset\E$. Thus, the flow of any vector field in $\W$ preserves $\E$, since its Lie derivative preserves the set of vector fields in $\E$. On any open subset of $M$ whose quotient by such a flow is a manifold, the latter then canonically inherits a contact structure $\xi=\E/\W$. Similarly, any hypersurface $N\subset M$ transverse to $\W$ inherits a contact structure $\xi=\E\cap TN$ that is invariant under such flows and locally projects to the canonical contact structure $\E/\W$.

\subsection{Contact topology}\label{Contact}

We now review contact 3-manifolds and their submanifolds, deferring the fundamental topic of convex surfaces to Section~\ref{Convex}. See, e.g., \cite{OS} for more details.

\subsubsection{Knots and their formal invariants}\label{Knots} First we consider knots suitably compatible with the ambient (oriented) contact plane field $\xi=\ker\alpha$. {\em Transverse} knots are everywhere transverse to $\xi$, whereas {\em Legendrian} knots are everywhere tangent to $\xi$. If $K$ is transverse, it is canonically oriented by the condition $\alpha|K>0$, whereas a Legendrian $K$ has $\alpha|K=0$ everywhere, so can be oriented arbitrarily. Transverse knots have a unique formal invariant. This arises from a relative invariant associated to each regular homotopy between transverse knots (namely, the difference between relative Euler classes of $\xi|K$ and the normal bundle $\nu K$ pulled back over the domain $I\times S^1$). When $K$ is nullhomologous and the Euler class $e(\xi)$ vanishes, it becomes an absolute invariant, the {\em self-linking number} $l(K)\in\Z$. (When $e(\xi)\ne0$, this is still defined relative to a preassigned Seifert surface $\Sigma$ since $e(\xi|\Sigma)=0$.) The self-linking number is defined to be the winding number along $K$ of any nowhere-zero section of $\xi$ on $N$, relative to the 0-framing (which is a vector field outward normal to any Seifert surface, that we can assume lies in $\xi$ since $K$ is transverse). Consideration of spin structures shows that this is always odd. A similar discussion of Legendrian knots yields two formal invariants $tb(K), r(K)\in\Z$. We only need $tb(K)$, which measures a vector field transverse to $\xi$ along $K$ relative to the 0-framing. We can now characterize tight contact structures in four ways: There is no transverse unknot with $l(K)\ge0$ or Legendrian unknot with $tb(K)\ge0$, and every nullhomologous knot has an upper bound on $tb$ of Legendrian representatives, or (if $e(\xi)=0$ or for a fixed Seifert surface) on $l$ of transverse representatives.

\subsubsection{Local models}\label{Models} To construct local models of subsets of contact manifolds, consider the standard contact structure on $\R^3$, which we usually describe as the kernel of $\alpha=dz+xdy$. This is the unique tight contact structure on $\R^3$ up to contactomorphism (diffeomorphism preserving the contact plane field), although we sometimes describe it using other contactomorphic plane fields. Uniqueness guarantees that every point of a contact 3-manifold has a neighborhood contactomorphic to the standard $\R^3$. We will introduce various other local models as needed. For example, every Legendrian knot in a contact 3-manifold has a neighborhood pairwise contactomorphic to a neighborhood of the $y$-axis in $(\R^3,dz+xdy)$ mod unit $y$-translation. Similarly, every transverse knot has a neighborhood contactomorphic to a neighborhood of the $z$-axis mod unit $z$-translation. In the latter case, it is a bit more natural to use the cylindrically symmetric contact form given by $\alpha'=dz+\frac12(xdy-ydx)=dz+\frac12 r^2d\theta$ in cylindrical coordinates. This is related to $\alpha$ by the contactomorphism $\varphi(x,y,z)=(x,y, z+\frac12 xy)$ with $\varphi^*(\alpha')=\alpha$.

\subsubsection{Projections of knots}\label{Proj}
Knots in $(\R^3,dz+xdy)$ can be described by projections. The {\em front projection} $(x,y,z)\mapsto (y,z)$ sends each contact plane to a line, whose slope $\frac{dz}{dy}=-x$ recovers the deleted coordinate. These lines realize all nonvertical slopes and are co-oriented upward. We can now represent a knot by its image in the projection, together with lines whose slopes recover $x$ as in Figure~\ref{stab}. Then a transverse knot projects to an immersion that is everywhere positively transverse to the lines as in the figure. (Note that vertical tangents must be oriented upward, and a positive crossing, drawn in standard ``x'' position, must have at least one upward strand.) The self-linking number is then the winding number of the {\em blackboard framing} given by a vector field in $\xi$ parallel to the $x$-axis, relative to the 0-framing; this is just the signed number of crossings in the diagram. A generic knot has finitely many tangencies to $\xi$ that become tangencies to lines in the projection. The image of a Legendrian knot is everywhere tangent to the lines, which then need not be drawn. Thus, we recover the $x$-coordinate from its slope everywhere. Since the slope cannot be vertical, the projection must have cusps at which the knot is parallel to the $x$-axis. The formal invariants can be read from the diagram by suitably counting crossings and cusps. A Legendrian knot can also be described by its {\em Lagrangian projection} $(x,y,z)\mapsto (x,y)$. We recover the $z$-coordinate up to a constant as $\Delta z=-\int xdy$. The integral can be interpreted as a signed area by Green's Theorem. This must vanish when we traverse the entire image of a closed Legendrian curve in $\R^3$. Similarly, two Legendrian arcs $C$ and $C'$ with the same initial point will have the same endpoint if and only if these endpoints have the same Lagrangian projection and the enclosed signed area vanishes.

\begin{figure}
\labellist
\small\hair 2pt
\pinlabel {$\xi$} at 25 25
\pinlabel {$\xi$} at 183 14
\endlabellist
\centering
\includegraphics{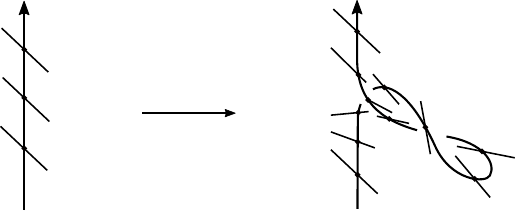}
\caption{Stabilizing a transverse knot by a $C^0$-small isotopy.}
\label{stab}
\end{figure}

\subsubsection{Operations on knots}\label{KnotOps} There are several natural operations on Legendrian and transverse knots. Both kinds of knots can be {\em stabilized}, lowering $tb$ by 1 or $l$ by 2, respectively. Figure~\ref{stab} shows the front projection of this procedure for a transverse knot in the standard $\R^3$, and is a local model for the general case. The altered curve can be kept transverse by suitably controlling the $x$-coordinate (as measured by $\xi$ in the figure). By keeping the new loops narrow and changes in $x$ small, we can arrange the stabilization to result from either an arbitrarily $C^0$-small isotopy, or a $C^0$-small transverse homotopy. The homotopy carries along the blackboard framing on $\xi$, but the isotopy twists the Seifert surface, so the self-linking drops by 2 (as also seen by counting crossings with sign). 

Another natural operation is the {\em transverse pushoff} changing an oriented Legendrian knot $K$ to a transverse knot $\tau K$. More precisely, there is an embedding of $\R\times S^1$ such that $\{t\}\times S^1$ maps onto $K$ when $t=0$ and is transverse otherwise, with orientation depending on the sign of $t$ (so each orientation on $K$ is realized as a transverse pushoff). The annulus is apparent in the image of the $xy$-plane in the local model of $K$ from Section~\ref{Models}. Alternatively, this operation can be derived using the contact condition and Lie derivative. We will adapt the latter method to tori in Engel manifolds as Proposition~\ref{push}.

There is also a standard (but not canonical) procedure for making an arbitrary knot Legendrian. We give a proof as preparation for a necessary lemma:

\begin{prop}\label{helices}\cite{E}.
Every knot $K$ in a contact 3-manifold is $C^0$-small isotopic to a Legendrian knot, and hence, to a transverse knot with either orientation.
\end{prop}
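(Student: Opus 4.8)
The plan is to localize. I would cover $K$ by finitely many charts, each contactomorphic to an open subset of the standard $(\R^3,\ker(dz+x\,dy))$, and subdivide $K$ accordingly into short arcs $K_1,\dots,K_n$ with $K_i$ in the $i$th chart and consecutive arcs meeting at points $p_j$ lying in chart overlaps. The statement then reduces to a local claim: in $(\R^3,dz+x\,dy)$, every embedded arc joining two given points is $C^0$-closely approximated by a Legendrian arc with the \emph{same} endpoints, lying in an arbitrarily thin tube about the original arc. Granting this, I would reassemble the Legendrian arcs $L_i$ into a closed curve, round the finitely many corners at the $p_j$ within $\ker(dz+x\,dy)$ (a corner between two Legendrian arcs through a common point can be smoothed by a $C^0$-small Legendrian arc with prescribed endpoint $1$-jets, using the flexibility of fronts), and obtain a Legendrian knot $L$ that is $C^0$-close to $K$. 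Keeping the arcs narrow and monotone along $K$ makes $L$ a section of a thin solid-torus neighborhood of $K$, hence isotopic to $K$ there through a $C^0$-small isotopy. The final ``hence'' clause follows by applying the transverse pushoff discussed above to $L$: it moves $L$ through a $C^0$-small isotopy into a thin tube, and, as noted there, the two signs of the pushoff parameter realize both orientations, so $K$ is $C^0$-small isotopic to a transverse knot with either orientation.

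The local claim is where the work is, and I would prove it with the front projection $(x,y,z)\mapsto(y,z)$ together with a zigzag. Writing the arc as $K_i(t)=(x(t),y(t),z(t))$, its front $\bar K_i(t)=(y(t),z(t))$ and the function $x(t)$ together determine it, and conversely \emph{any} plane curve (immersed, with only cusps, no vertical tangencies) is the front of a Legendrian arc whose $x$-coordinate equals minus the slope. So I would replace $\bar K_i$ by a fine sawtooth curve $\tilde\gamma$, $C^0$-close to $\bar K_i$ in the $yz$-plane: on each piece of a fine subdivision of $t$, $\tilde\gamma$ consists of many short segments whose slopes alternate between $-x(t)\pm\epsilon$, joined at cusps, arranged so that the net displacement of $\tilde\gamma$ tracks that of $\bar K_i$. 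Near a vertical tangency of $\bar K_i$, one period of the sawtooth of $y$-width $\delta$ rises by roughly $2\epsilon\delta$ in $z$, so finitely many periods of tiny $y$-width accumulate the required change of $z$ while staying $C^0$-close; away from vertical tangencies the same device absorbs the discrepancy between the slope of $\bar K_i$ and $-x(t)$. As the subdivision is refined, both the $yz$-error and the deviation of the lift's $x$-coordinate from $x(t)$ go to $0$, the transverse excursion of the lift (of smaller order than the $C^0$-error) goes to $0$, and a final tiny adjustment pins the endpoints. This is precisely the construction I would isolate as Lemma~\ref{zigzag} for later use.

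The main obstacle is exactly this zigzag estimate: getting \emph{all} of $C^0$-smallness of the front perturbation, closeness of the lift's $x$-coordinate to the prescribed $x(t)$, exact endpoint matching, and graphicality of $L$ over $K$ inside a tube of freely chosen radius to hold simultaneously. Everything around it---the chart cover, the reassembly and Legendrian corner-rounding, the transverse pushoff, and upgrading $C^0$-closeness to a $C^0$-small ambient isotopy via the thin-tube picture---is routine once the zigzag lemma is in hand.
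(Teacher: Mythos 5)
Your argument is correct and rests on the same geometric device the paper uses---local zigzag/helix approximation in a chart, reassembly with corner rounding via fronts, then transverse pushoff for the ``hence'' clause---but the two proofs are organized differently, and the difference is worth noting. The paper first perturbs $K$ to a structured piecewise form: Legendrian in small neighborhoods of its finitely many generic tangencies to $\xi$, and transverse with alternating orientations on the smooth segments in between. Each transverse segment then sits in the cylindrically symmetric model $(\R^3,dz+\frac12 r^2\,d\theta)$, in which the boundary of a thin tube is foliated by Legendrian helices, and one simply chooses the radius $\epsilon$ so that a helix closes up or nearly matches endpoints. This preconditioning localizes all the delicacy into a canonical model with no front-projection analysis; the sawtooth interpretation is only recorded afterward, in the paragraph following the proof. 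You instead approximate arbitrary arcs directly by sawtooth fronts, which is fine but pushes the work onto you: near vertical tangencies of the front the sawtooth must accumulate $z$-displacement across cusps rather than track a bounded slope, and exact endpoint matching and embeddedness of the lift have to be arranged by hand. Both routes are valid; the paper's preconditioning is the cleaner shortcut, while your version is closer to the parametrized statement that the paper does need later (and which you correctly anticipate isolating as Lemma~\ref{zigzag}) for the annulus construction in Theorem~\ref{main1}.
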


\begin{proof}
If $K$ is transverse, it is locally modeled by the $z$-axis in $(\R^3,dz+\frac12 r^2d\theta)$ mod unit $z$-translation. The boundary of an $\epsilon$-neighborhood of this is foliated by Legendrian helices, whose slope becomes arbitrarily small as $\epsilon$ decreases. By choosing $\epsilon$ carefully, we can arrange each helix in $\R^3$ to descend to a closed curve in the model that is isotopic to the image of the $z$-axis, as required. If $K$ is not transverse, it generically has a nonempty, finite set of tangencies to the contact planes. Perturb $K$ to be piecewise smooth, Legendrian near these tangencies, and transverse (with alternating orientations) on the smooth segments in between. The previous local model replaces these segments by Legendrian helices. We can smooth the resulting Legendrian knot locally using a front projection near each corner.
\end{proof}

For a generic knot $K$ in $(\R^3,dz+xdy)$, its front projection has finitely many tangencies to the image lines of $\xi$. The above procedure adjusts $x$ to make $K$ Legendrian near these. It then replaces each remaining segment in the projection by a {\em zig-zag arc}, a cusped arc $C^0$-close to the original, that repeatedly crosses it to create alternating {\em teeth} (cusped triangular regions bounded by the new and old arcs). The $C^0$-small conclusion arranges its tangent lines to be arbitrarily close to the lines recording the $x$-coordinate of $K$. Such a zig-zag arc can be assumed 1:1 when the original segment is. (This structure is all visible for the standard model transverse arc, viewed in $(\R^3, dz+xdy)$. We can shear its front projection to realize any constant slopes for $\xi$ by $(x,y,z)\mapsto(x-m,y,z+my)$. The front projection of any transverse arc can then be locally approximated by such a model after rescaling the arc by $(x,y,z)\mapsto(x,cy,cz)$ for large $c$.)

\subsubsection{Surfaces in contact 3-manifolds}\label{SurfCont} Surfaces necessarily interact with contact structures in more complicated ways than curves. For any surface $\Sigma$, the subspaces $\xi\cap T\Sigma$ have dimension at least 1 everywhere. On the open subset of $\Sigma$ where the dimension is 1, we have a {\em characteristic line field}, oriented via the orientations on $\xi$ and $\Sigma$, and integrating to the {\em characteristic foliation}. This is singular at the other points of $\Sigma$. The singular set cannot contain an open subset, since $\xi$ would be closed under Lie bracket there. In fact, the contact condition implies the singularities have nonzero divergence in the characteristic foliation, so the singular set lies in a 1-manifold. Generically, it consists of isolated points, but the resulting singular foliation may still be quite complicated. Fortunately, perturbing $\Sigma$ allows us substantial control of the characteristic foliation, as discussed in the next section. This foliation, in turn, determines the contact structure in a neighborhood of $\Sigma$ when, for example, $\partial\Sigma$ is empty or Legendrian. We will need one other simple example:

\begin{prop}\label{annuli}
Let $C$ denote $I$ or $S^1$, and for $i=0,1$, let $f_i\co I\times C\to N_i$ be embeddings of the square or annulus into 3-manifolds with contact forms $\alpha_i$. Suppose the (oriented) characteristic foliation of each is the image of the product foliation $I\times \{x\}$. Then $f_1\circ f_0^{-1}$ extends to a contactomorphism of neighborhoods of the images.
\end{prop}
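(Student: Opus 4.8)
The plan is to prove this by a relative Moser (Gray stability) argument: after reducing to a pair of contact forms on a common neighborhood of the surface, I realize the desired contactomorphism as the time-one flow of a vector field vanishing on the surface. (This is essentially Giroux's principle that the oriented characteristic foliation of a surface determines the germ of the contact structure along it; the present case is easy because the characteristic foliation, being a product foliation, is nonsingular.) Write $\Sigma_i=f_i(I\times C)$, a compact square or annulus, so all neighborhoods and flows below may be taken uniform. First I would extend $f_1\circ f_0^{-1}$, via the tubular neighborhood theorem (the normal line bundles being trivial since $C$ is $I$ or $S^1$), to a diffeomorphism $\Phi$ from a neighborhood of $\Sigma_0$ in $N_0$ onto a neighborhood of $\Sigma_1$ in $N_1$ with $\Phi|_{\Sigma_0}=f_1\circ f_0^{-1}$. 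Setting $\beta:=\Phi^*\alpha_1$, it then suffices to find a diffeomorphism $\phi$, fixing $\Sigma_0$ pointwise and defined on a smaller neighborhood in $N_0$, that carries $\xi_0:=\ker\alpha_0$ to $\ker\beta$; then $\Phi\circ\phi$ is the required contactomorphism.

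Next I would normalize $\beta$ along $\Sigma_0$ so that $\alpha_0=\beta$ on the restricted bundle $TN_0|_{\Sigma_0}$. Since $\Phi$ carries the product foliation on $\Sigma_0$ --- which is the oriented characteristic foliation of $\xi_0$ --- to the oriented characteristic foliation of $\xi_1$, the $1$-forms $\alpha_0|_{T\Sigma_0}$ and $\beta|_{T\Sigma_0}$ are nowhere zero with the same kernel $\ell$, inducing on it the same orientation; hence $\beta|_{T\Sigma_0}=\lambda\,\alpha_0|_{T\Sigma_0}$ for a positive function $\lambda$ on $\Sigma_0$. Extending $\lambda$ to a positive function on the neighborhood and replacing $\beta$ by $\lambda^{-1}\beta$ (which leaves $\ker\beta$ unchanged), I may assume $\alpha_0=\beta$ on $T\Sigma_0$. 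The plane fields $\xi_0$ and $\ker\beta$ then contain the common line field $\ell$ along $\Sigma_0$ but may tilt differently in the normal direction. I remove this discrepancy by an explicit shear: in coordinates $(p,t)$ with $\Sigma_0=\{t=0\}$, a diffeomorphism of the form $(p,t)\mapsto(\psi_t(p),t)$ with $\psi_0=\id$ fixes $\Sigma_0$ pointwise, restricts to the identity on $T\Sigma_0$, and sends $\partial_t$ along $\Sigma_0$ to $\partial_t+V$, where $V$ is the vector field on $\Sigma_0$ obtained by differentiating $\psi_t$ at $t=0$. Choosing $V$ to cancel the difference of the two normal tilts makes $\psi^*\beta$ and $\alpha_0$ have the same kernel along $\Sigma_0$; since these forms still agree on $T\Sigma_0$, one checks they agree on all of $TN_0|_{\Sigma_0}$.

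With this normalization I would run Moser's argument on the segment $\alpha_u=(1-u)\alpha_0+u\,\beta$, $u\in[0,1]$, which consists of contact forms near $\Sigma_0$ because $\alpha_0=\beta$ on $TN_0|_{\Sigma_0}$, the contact condition is open, and $[0,1]$ is compact. The standard computation produces a time-dependent vector field $X_u\in\ker\alpha_u$, determined by $\iota_{X_u}\alpha_u=0$ and $\iota_{X_u}d\alpha_u=\mu_u\alpha_u-(\beta-\alpha_0)$ with $\mu_u=(\beta-\alpha_0)(R_u)$ for the Reeb field $R_u$ of $\alpha_u$, whose flow $\phi_u$ satisfies $\phi_u^*\alpha_u=g_u\,\alpha_0$ for positive functions $g_u$. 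Since $\beta-\alpha_0$ vanishes on $TN_0|_{\Sigma_0}$, both $\mu_u$ and $X_u$ vanish on $\Sigma_0$, so $\phi_u$ is defined on a uniform neighborhood of $\Sigma_0$ and fixes it pointwise; then $\phi_1^*\beta=g_1\,\alpha_0$, so $\phi:=\phi_1$ carries $\xi_0$ to $\ker\beta$, as needed.

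I expect the main obstacle to be the normalization step --- in particular, controlling the normal tilt of the two plane fields along $\Sigma_0$. Agreement of the contact structures along the characteristic foliation alone does not suffice to start Moser, and one must check that the shear can be chosen coherently over the whole (possibly annular) surface without disturbing the tangential matching already in place. The remaining ingredients --- the tubular neighborhood theorem, openness of the contact condition, and the Moser computation --- are routine.
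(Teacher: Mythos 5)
Your proof is correct and takes essentially the same route as the paper: extend $f_1\circ f_0^{-1}$ to a diffeomorphism, normalize so that the pulled-back form agrees with $\alpha_0$ on $TN_0$ along the surface, then run Moser. The paper compresses the normalization into the phrase ``by uniqueness of tubular neighborhoods, we can assume $\varphi$ preserves the (oriented) contact planes,\dots\ so after rescaling $\alpha_1$, we have $\varphi^*\alpha_1=\alpha_0$ on $TN_0|A$''; your explicit decomposition into a tangential rescaling $\beta\mapsto\lambda^{-1}\beta$ followed by a shear $(p,t)\mapsto(\psi_t(p),t)$ to fix the normal tilt is exactly what that step amounts to. The only thing worth adding is the observation you implicitly use when justifying the linear Moser path: since the two forms agree pointwise on $TN_0|_{\Sigma_0}$, the corresponding $d\alpha_0$ and $d\beta$ restrict to positively proportional area forms on the common kernel there, so the convex combination stays contact on a uniform neighborhood — which you did note via openness and compactness.
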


\begin{proof}
(Sketch.) The map $f_1\circ f_0^{-1}$ extends to a diffeomorphism $\varphi$ of neighborhoods. By uniqueness of tubular neighborhoods, we can assume $\varphi$ preserves the (oriented) contact planes at all points on the image $A$ of $f_0$, so after rescaling $\alpha_1$, we have $\varphi^*\alpha_1=\alpha_0$ on $TN_0|A$. A standard argument (Moser's method) isotopes $\varphi$ to the required contactomorphism near $A$, by integrating a time-dependent vector field constructed from $\varphi^*\alpha_1-\alpha_0$. Since this vanishes on $TN_0|A$, $A$ is fixed.
\end{proof}

\subsubsection{A key lemma} We can now present our final lemma on making arcs Legendrian. The manifold $S^1\times\R^2$ with tight contact form $
\sin(w)dx+\cos(w)dy$ has a natural front projection $(w,x,y)\mapsto(x,y)$ analogous to that of $\R^3$. If an arc $C$ in $S^1\times\R^2$ projects diffeomorphically (up to orientation) to an interval on the $x$-axis, its $w$-coordinate must everywhere be 0 or $\pi$ if it is Legendrian, and in an open semicircle between these values if it is transverse.

\begin{lem}\label{zigzag}
Consider a smooth family $C_s$ of arcs as above, with $s$ ranging over a compact parameter manifold. Suppose each arc projects as above to $[-1,2]$ on the $x$-axis and is transverse over $(0,1)$ and Legendrian elsewhere, with the family independent of $s$ near $x=0,1$. Then there is a smooth family of $C^0$-small isotopies supported over $[0,1]$ making the arcs Legendrian, with front projections given over $[0,1]$ by embedded zig-zag arcs (as described after Proposition~\ref{helices}) whose intersections with the $x$-axis are independent of $s$. These can be arranged so that for each $s$, the pair of teeth adjacent to each positive intersection have equal area (of opposite sign).
\end{lem}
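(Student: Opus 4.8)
The plan is to carry out a parametrized, quantitatively controlled version of the zig-zag construction from the proof of Proposition~\ref{helices} (and the paragraphs following it), working entirely in the front projection $(w,x,y)\mapsto(x,y)$. It is helpful to note that $(S^1\times\R^2,\sin w\,dx+\cos w\,dy)$ is the standard contact structure on $ST^*\R^2$ and that this projection is the classical Legendrian-front (wavefront) projection: every generic planar front — an immersed curve with transverse double points and semicubical cusps — lifts to a smooth Legendrian arc, with the $w$-coordinate recovered pointwise from the conormal direction of the front. Thus the problem reduces to constructing, smoothly in $s$, a suitable embedded cusped front over $[0,1]$ and then taking its Legendrian lift.

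Concretely, over $[0,1]$ the front of $C_s$ is the segment $[0,1]\times\{0\}$, with $w$-recording lines of slope $-\tan w_s(x)$, where $w_s(x)\in(0,\pi)$ on the transverse part and $w_s\in\{0,\pi\}$ at $x=0,1$ (all $s$-independent near $x=0,1$). I would fix, once and for all, points $0=x_0<x_1<\dots<x_N=1$, independent of $s$, and over each $[x_{i-1},x_i]$ insert a single thin tooth: a cusped arc from $(x_{i-1},0)$ to $(x_i,0)$ staying strictly on one side of the $x$-axis in between, with the sides alternating up and down, and shaped (as in the local model of Figure~\ref{stab}) so that all of its tangent lines, including at the cusps, are $C^0$-close to the $w_s$-recording lines. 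Keeping the teeth thin and over disjoint $x$-intervals makes the front embedded, forces its $x$-axis intersections to be exactly the $x_i$, and — since the lift's $w$-coordinate then stays close to $w_s$ — makes the lifting isotopy $C^0$-small and supported over $[0,1]$. Matching cusp values and endpoint slopes across consecutive $x_i$ lets the teeth assemble into one smooth Legendrian arc; near $x=0$ and $x=1$ the outermost teeth leave the axis tangentially ($w\in\{0,\pi\}$) rather than transversally, so that the arc glues smoothly to the unchanged Legendrian pieces, and this is governed by the same ``zig-zag replacing a transverse segment adjacent to a tangency'' model used in Proposition~\ref{helices}. Letting each tooth's shape depend smoothly on the local value of $w_s$ makes the family smooth in $s$, and near $x=0,1$ it is automatically $s$-independent because $w_s$ is.

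It remains to arrange the area condition. Since consecutive $x_i$-crossings of the oriented front with the oriented $x$-axis have opposite signs, the interior teeth group into disjoint adjacent pairs, one pair flanking each positive crossing, and the two teeth of any such pair automatically have opposite-sign area. I would build the teeth in order and, having placed the first tooth $T$ of such a pair with a deliberately tiny area, choose the amplitude of its partner $T'$ so that the signed area of $T'$ exactly cancels that of $T$: for fixed endpoints, fixed endpoint slopes and fixed cusp type, the signed area of a tooth depends continuously and monotonically on its amplitude and ranges over an interval of one sign containing all sufficiently small values, so such an amplitude exists and, by the implicit function theorem, can be chosen smoothly in $s$. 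The main obstacle is precisely this coherent balancing: a single tooth's signed area cannot be made to vanish, so the cancellation must be engineered in pairs and then kept consistent as $s$ — and with it the functions $w_s$ and the ambient front geometry — varies, all while simultaneously holding the crossing points $x_i$ fixed, keeping the isotopy $C^0$-small, keeping the assembled arc smooth, and matching the prescribed Legendrian pieces at the ends.
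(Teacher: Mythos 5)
Your proposal is correct in spirit but takes a genuinely different route from the paper. The paper does not attempt to design the parametrized front directly. Instead it first reduces the $s$-family to a single reference arc $C_{s_0}$: it uses the rectangle $A=I^w\times I^x\times\{0\}$ (whose characteristic foliation is by constant-$x$ leaves), a fiberwise $x$-preserving diffeomorphism of $A$ carrying $C_s|I^x$ to $C_{s_0}|I^x$, and Proposition~\ref{annuli} (Moser's method) to promote this to a family of contactomorphisms of neighborhoods that are the identity for $x$ outside $I^x$. It then performs the helix construction of Proposition~\ref{helices} exactly once, for $C_{s_0}$, in a model chart where $A\cap U$ sits in $\{y=0\}$, and pulls the resulting zig-zag back by these contactomorphisms to obtain the whole family. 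The $s$-independence of the $x$-axis crossings is then automatic (they are the $x$-values of the intersections with $A$, which the contactomorphisms preserve), as is smoothness in $s$. Your approach instead works entirely in the front projection, building a parametrized cusped front over fixed crossing points $x_i$ and invoking the Legendrian-lift dictionary for $ST^*\R^2$. That is a legitimate, more elementary and self-contained strategy, and it correctly identifies the two substantive points — keeping crossings $s$-independent and balancing signed areas — as the things one must engineer by hand. What the paper's route buys is that all the delicate pointwise geometry (how a ``tooth'' whose tangents track a non-constant slope field $-\tan w_s(x)$ actually looks, the backtracking in $x$ at cusps, the degeneration of slopes near $x=0,1$ where the arc becomes Legendrian, the gluing to the flat Legendrian ends) is handled once, in the standard transverse model via the helix, and then transported by contactomorphisms; your route has to confront each of these head-on. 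Two places where your sketch is thinner than it should be: (i) when $w_s$ is bounded away from $\pi/2$ the cusp of a tooth necessarily overshoots the interval $[x_{i-1},x_i]$ in $x$, so ``thin teeth over disjoint $x$-intervals'' is not literally achievable and embeddedness must instead come from the alternation of sides; and (ii) the tooth shapes must depend on the whole restriction $w_s|_{[x_{i-1},x_i]}$, not just one local value, so the smooth-in-$s$ dependence needs the compactness of the parameter space and continuity of $w_s$ in $(s,x)$ jointly. Both are fillable, and your implicit-function-theorem argument for the area balancing is a clean way to state what the paper compresses into ``choosing $\epsilon$ even smaller allows the flexibility to control areas.''
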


\begin{proof}
Parametrize each arc $C_s$ by $x$. Then the arcs all agree outside some compact subset of $(0,1)$; let $I^x\subset(0,1)$ be a compact interval whose interior contains this. There is a compact interval $I^w$ in $S^1-\{0,\pi\}$ such that the rectangle $A=I^w\times I^x\times\{0\}\subset S^1\times\R^2$ contains each $C_s|\inter I^x$ in its interior. Then $A$ has a nonsingular characteristic foliation whose leaves have constant $x$. For each $s$, there is a self-diffeomorphism of $A$ preserving the $x$-coordinate, sending $C_s|I^x$ to a preassigned $C_{s_0}|I^x$, and with support projecting into $\inter I^x$. The previous proposition extends this to a contactomorphism of neighborhoods that (by the proof of the proposition) extends by the identity for $x\notin I^x$ to a neighborhood of $C_s$. For an interval $J\subset(0,1)$ containing $I^x$ in its interior, a similar contactomorphism sends a neighborhood $U$ of $C_{s_0}|J$ to the model transverse arc in $(\R^3,dz+xdy)$, with $A\cap U$ mapping into the plane $y=0$. The standard helix of any sufficiently small radius $\epsilon$ then pulls back into $U$. Smoothly joining this helix to the Legendrian parts of $C_{s_0}$ (as in the proof of Proposition~\ref{helices}, working over the small intervals $[0,1]-J$) gives a Legendrian arc isotopic to $C_{s_0}$.  Pulling back to each $C_s$ gives the required family of $C^0$-small isotopies to Legendrian arcs (since all steps in the construction are smooth in $s$). The image zig-zag arcs all have the same intersections with the $x$-axis (projected from intersections with $A$). Choosing $\epsilon$ small enough allows the flexibility to control areas as required, by suitably extending the smaller teeth.
\end{proof}

\subsection{Convex surfaces in contact topology}\label{Convex}

Next we sketch what we need from convex surface theory, which was pioneered by Giroux~\cite{G} and subsequently Honda~\cite{H}. (See the latter for a broader discussion.) We also present an addendum for later use. A {\em contact vector field} in a contact manifold is one whose flow preserves the contact structure. A surface is {\em convex} if it is transverse to such a vector field. Every closed surface in a contact manifold is $C^\infty$-small isotopic to a convex surface. The same holds for a compact surface with connected, Legendrian boundary $C$, after a $C^0$-small perturbation rel $C$ near $C$, provided $tb(C)\le 0$. (If there are more boundary components, the same holds when each satisfies an analogous condition.) If $\Sigma$ is convex with respect to a contact vector field $v$, its {\em dividing set} $\Gamma$ is the subset of $\Sigma$ on which $v$ lies in $\xi$. This is always a compact 1-manifold, with boundary in $\partial\Sigma$, whose components are called {\em dividing curves}. The dividing set is independent of choice of $v$, up to isotopy in $\Sigma$ preserving the characteristic foliation. It is nonempty, and splits $\Sigma$ into two (often disconnected) regions $R_+$ and $R_-$, defined by whether projecting out $v$ preserves or reverses orientation as an isomorphism from $\xi_p$ to $T\Sigma_p$. The characteristic foliation is always transverse to $\Gamma$, directed from $R_+$ to $R_-$. Clearly, for a closed surface $\Sigma$, the Euler characteristics satisfy $\chi(R_+)+\chi(R_-)=\chi(\Sigma)=\langle e(T\Sigma),\Sigma\rangle$. The orientation-reversing bundle map over $R_-$ also gives $\chi(R_+)-\chi(R_-)=\langle e(\xi),\Sigma\rangle$. For example, $\chi(R_\pm)=0$ for a torus with $\xi|\Sigma$ trivial. The {\em Giroux Criterion} says that a convex surface other than $S^2$ (with Legendrian boundary allowed) has a tight neighborhood if and only if $\Gamma$ has no inessential circles. (For $S^2$, the corresponding condition is that $\Gamma$ should be a single circle.) Thus, for a convex torus with a tight neighborhood, $\Gamma$ must consist of a nonzero, even number of parallel essential circles, so $\langle e(\xi),\Sigma\rangle=0$. (Constraints for surfaces of other genera follow similarly.)

\subsubsection{Giroux Flexibility and a useful addendum} From the viewpoint of contact topology, the dividing set of a convex surface captures the essential information of its characteristic foliation. (We will see in Examples~\ref{liftReeb} and \ref{transLift} that it is less complete in the Engel setting.) An oriented singular foliation on a compact surface $\Sigma$ with boundary is {\em adapted} to a co-oriented 1-manifold $\Gamma$ embedded rel boundary in $\Sigma$ if $\Sigma$ embeds as a convex surface with Legendrian boundary in some contact 3-manifold, realizing the given foliation and dividing set. This is a very weak condition; we give our main examples below. For a fixed $\Gamma$, such foliations are interchangeable:

\begin{thm}[Giroux Flexibility Theorem~\cite{G}]\label{flex}
In a contact 3-manifold $(N,\xi)$, let $\Sigma$ be a compact surface with Legendrian boundary (possibly empty), convex with respect to a contact vector field $v$ and with dividing set $\Gamma$. Let $\F$ be another singular foliation adapted to $\Gamma$. Then there is an isotopy $\varphi_t\co\Sigma\to N$, $t\in[0,1]$, with $\varphi_0$ the inclusion, fixing $\Gamma$ and preserving $\partial \Sigma$, such that $\varphi_1$ maps $\F$ to the characteristic foliation on $\varphi_1(\Sigma)$ and each $\varphi_t(\Sigma)$ is transverse to $v$. \qed
\end{thm}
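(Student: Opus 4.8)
The plan is to pass to an $\R$-invariant model neighborhood of $\Sigma$ determined by $v$, and to realize $\F$ as the characteristic foliation of a graph over $\Sigma$ in that model. Since $\Sigma$ is transverse to the contact vector field $v$, flowing along $v$ identifies a neighborhood of $\Sigma$ in $N$ with $\Sigma\times(-1,1)$ so that $v=\partial_t$ and $\xi$ is invariant under $t$-translation; hence $\xi=\ker(\beta+u\,dt)$ for a $t$-independent $1$-form $\beta$ and a $t$-independent function $u$ on $\Sigma$. A short computation gives $\alpha\wedge d\alpha=(u\,d\beta+\beta\wedge du)\wedge dt$, so the contact condition is positivity of the area form $u\,d\beta+\beta\wedge du$ on $\Sigma$; the characteristic foliation of each slice $\Sigma\times\{t\}$ is $\ker\beta$, the dividing set is $\Gamma=u^{-1}(0)$, and $R_\pm$ are the two sides of $\Gamma$. (Near a Legendrian boundary curve there is in addition a standard collar form with $\beta|_{\partial\Sigma}=0$, which I would keep fixed throughout.)

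Next I would reduce to a realization statement by graphing. For any $f\co\Sigma\to(-1,1)$ vanishing near $\partial\Sigma$, the graph $S_f=\{(x,f(x))\}$ is a surface transverse to $v$; pulling $\beta+u\,dt$ back along $x\mapsto(x,f(x))$ shows its characteristic foliation, read on $\Sigma$, is $\ker(\beta+u\,df)$, and $v\in\xi$ along $S_f$ exactly over $\Gamma$, so its dividing set is again $\Gamma$, independent of $f$. Consequently, if one can find such an $f$ with $\F=\ker(\beta+u\,df)$, the straight-line isotopy $\varphi_t(x)=(x,tf(x))$ has all the required properties: $\varphi_0$ is the inclusion, each $\varphi_t(\Sigma)=S_{tf}$ is a graph hence transverse to $v$, the dividing set stays $\Gamma$, $\varphi_t$ is the identity near $\partial\Sigma$, and $\varphi_1$ carries $\F$ to the characteristic foliation of $\varphi_1(\Sigma)$.

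So everything reduces to the \emph{Realization Lemma}: every foliation $\F$ adapted to $\Gamma$ is of the form $\ker(\beta+u\,df)$ for some $f$ vanishing near $\partial\Sigma$. This is the heart of the argument and the step I expect to be the main obstacle. By definition $\F$ is the characteristic foliation of some convex surface with dividing set $\Gamma$, so applying the model to that surface writes $\F=\ker\beta_1$ for neighborhood data $(\beta_1,u_1)$ with $u_1^{-1}(0)=\Gamma$ and the same $R_\pm$. Thus I need $f$ and a positive function $\lambda$ with $u\,df=\lambda\beta_1-\beta$, i.e.\ $\lambda\beta_1-\beta$ must be divisible by $u$ with \emph{exact} quotient. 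Along $\Gamma$ both $\beta$ and $\beta_1$ are nonzero and cross $\Gamma$ transversally from $R_+$ to $R_-$, so $\lambda$ can be prescribed along $\Gamma$ so that $\lambda\beta_1=\beta$ there; after normalizing near $\Gamma$ so that $u$ has simple zeros, $\lambda\beta_1-\beta=u\,\eta$ for a smooth $1$-form $\eta$ and any positive extension of $\lambda$. The genuinely hard point is to use the remaining freedom in $\lambda$ (equivalently, in the vector field directing $\F$ and in the choice of model) to arrange that $\eta$ be exact: this is a Poincar\'e-lemma / cohomology-vanishing argument, carried out after putting $u$ and $\beta$ into a normal form near $\Gamma$ and treating the subsurfaces $R_+$ and $R_-$ separately, and it is here that $u\,d\beta+\beta\wedge du>0$ enters essentially. (Dually: writing $\beta=\iota_{X_0}\omega$ for $\omega=u\,d\beta+\beta\wedge du$, one has $\ker(\beta+u\,df)=\mathbb R\langle X_0+u\,W_f\rangle$ with $W_f$ the $\omega$-Hamiltonian field of $f$; one must realize a prescribed vector field vanishing on $\Gamma$ in this form, and the divergence condition characterizing ``adapted to $\Gamma$'' is exactly what makes this solvable.)

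Granting the Realization Lemma, the reduction above completes the proof; the degenerate case where $\F$ is already the characteristic foliation of $\Sigma$ is $f\equiv0$. One routine point: shrinking $f$ would change $\ker(\beta+u\,df)$, so one cannot make $S_f$ small, and the graph must be kept inside the chosen $\R$-invariant neighborhood --- achieved by enlarging that neighborhood, or by realizing $\F$ in stages (a $C^0$-small step suffices when $\F$ is $C^0$-close to the original characteristic foliation). A Gray-stability argument along a path of $\R$-invariant contact structures with fixed $\Gamma$ gives an alternative route, but the graphing approach keeps every intermediate surface visibly transverse to $v$, which is what the theorem demands.
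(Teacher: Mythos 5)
The paper does not actually prove Theorem~\ref{flex}; the \qed terminating the statement signals that it is imported from Giroux~\cite{G}. The only window the paper gives into the argument is the remark inside the proof of Addendum~\ref{C0} that Giroux constructs his isotopy in the $\R$-invariant product $\Sigma\times\R$ defined by $v$, with $\xi=\ker(\beta+u\,dt)$. Your reduction to that model, and your computation that the graph $S_f$ pulls the contact form back to $\beta+u\,df$ with dividing set again $u^{-1}(0)=\Gamma$, are correct, so the framework is the right one. But the proposal is not a proof, for two reasons.

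First, what you call the \emph{Realization Lemma} is the entire content of the theorem, and you explicitly leave it open. The sketch you offer — normalize $u,\beta$ near $\Gamma$, write $\lambda\beta_1-\beta=u\,\eta$, then ``kill'' the cohomology class of $\eta$ — does not obviously close: the components of $R_\pm$ are typically not simply connected (for a convex torus they are annuli), so the Poincar\'e lemma is unavailable, and you do not say how the remaining freedom in $\lambda$ produces an exact $\eta$ on each such component. That is precisely where Giroux's proof does real work, following leaves of the characteristic foliation and using the divergence condition; it cannot be left as a parenthetical. Second, the graph-only reduction is \emph{too rigid and the lemma as you state it is false}. For any graph, $\ker(\beta+u\,df)$ restricted to $\Gamma$ is $\ker\beta|_\Gamma$ because $u$ vanishes there; but an adapted foliation $\F$ need only be transverse to $\Gamma$ with the correct co-orientation, so $\F|_\Gamma$ need not coincide with $\ker\beta|_\Gamma$. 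What is missing is the observation that although each $\varphi_t(\Sigma)$ is forced to be a graph $S_{g_t}$, the parametrization $\varphi_t$ differs from $x\mapsto(x,g_t(x))$ by a self-diffeomorphism $\psi_t$ of $\Sigma$ fixing $\Gamma$ pointwise, and it is $d\psi_1$ along $\Gamma$ — not $f$ — that rotates one foliation to the other there. The statement to prove is therefore $\psi^*\ker(\beta+u\,dg)=\F$ for some pair $(\psi,g)$, and the exactness argument must be set up with that extra freedom. As written you have the right scaffolding, a substantive error in the reduction, and the essential step missing.
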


\noindent The isotopy constructed here is not $C^0$-small. For example, if $\F$ has a unique singularity and is obtained from the characteristic foliation of $\Sigma$ by an isotopy of $\Sigma$, we should expect the singular point to move a large distance. However, this can be adequately remedied:

\begin{adden}\label{C0}
After an isotopy of $\F$ rel $\Gamma$ in $\Sigma$, the isotopy $\varphi_t$ can be assumed to be $C^0$-small.
\end{adden}

\begin{Remark}\label{notC0}
We could avoid this addendum and several other technicalities by dropping the $C^0$ conclusions from our theorems. However, the stronger conclusions seem more natural (Remark~\ref{C0small}). While the addendum is new (to the author's knowledge), the proof uses standard techniques, and the addendum probably would already be a standard tool in contact 3-manifold topology if more surface operations could be made small (cf. Section~\ref{BypassCont}).
\end{Remark}

\begin{proof}[Proof of Addendum~\ref{C0}]
The flow of the contact vector field $v$ maps $\Sigma\times\R$ into $N$, identifying $\Sigma\times\{0\}$ with $\Sigma$ and sending the unit vector field along $\R$ to $v$, so that the pullback of $\xi$ is given by an $\R$-invariant contact form $\alpha_1$. Giroux actually constructs his isotopy $\varphi_t$ in this product. By compactness, the image of the isotopy projects into an interval of the form $[-a,a]$ in $\R$. For sufficiently small $\epsilon>0$, we wish to find an isotopy of the form $\psi_s\times\eta_s\co\Sigma\times\R\to\Sigma\times\R$, $s\in[\epsilon,1]$, through contactomorphisms, with $\psi_1\times\eta_1$ the identity, $\eta_s(z)=sz$ and $\psi_s$ fixing $\Gamma$. Then $\psi_\epsilon\times\eta_\epsilon$ contactomorphically squeezes $\Sigma\times[-a,a]$ onto $\Sigma\times[-\epsilon a,\epsilon a]$, preserving the product structure. Thus, the conclusion of the Flexibility Theorem is still satisfied if we replace $\varphi_t$ by $\varphi'_t=(\psi_\epsilon\times\eta_\epsilon)\circ\varphi_t\circ \psi_\epsilon^{-1}$ and $\F$ by the isotopic foliation $\F'=\psi_\epsilon(\F)$. Since $\varphi'_1(\Sigma)$ is transverse to $v$, we can interpret it as the graph of a function $f\co\Sigma\to\R$ that can be assumed arbitrarily $C^0$-small by choice of $\epsilon$. The theorem and addendum are now satisfied by setting $\varphi''_t=\id_\Sigma\times (tf)$ and using the foliation $\pi\circ\varphi'_1(\F')$, where $\pi$ is projection to $\Sigma$, with $\pi\circ\varphi'_t$ the required isotopy from $\F'$.

We construct $\psi_s$ by the Moser method, which also underlies the Flexibility Theorem and our various local models. Decompose the $\R$-invariant contact form as $\alpha_1=\beta+udz$, where $\beta$ and $u$ are, respectively, a 1-form and function on $\Sigma$, and $u^{-1}(0)=\Gamma$. Let $\alpha_s=(\id_\Sigma\times\eta_s)^*\alpha_1=\beta+sudz$. We would like to construct $\psi_s$ on $\Sigma$ so that $(\psi_s\times\id_\R)^*\alpha_s$ is $\alpha_1$ up to scale, for then $\psi_s\times\eta_s$ preserves $\alpha_1$ up to scale so is a contactomorphism. By direct computation, $\alpha_s\wedge d\alpha_s=s\alpha_1\wedge d\alpha_1$ (since $\beta\wedge d\beta\in \Omega^3(\Sigma)$ and $dz\wedge dz$ both vanish). Thus, $\xi_s=\ker\alpha_s$ is a contact structure whenever $s>0$. Then $d\alpha_s|\xi_s$ is nondegenerate, so there is a unique, $\R$-invariant, vector field $w_s$ in $\xi_s$ (equivalently, with $\iota_{w_s}\alpha_s=0$) such that $\iota_{w_s} d\alpha_s=-\frac{d\alpha_s}{ds}$ on $\xi_s$, and a unique function $g_s$ on $\Sigma$ so that on all of $T(\Sigma\times\R)$ we have
$$\iota_{w_s} d\alpha_s+\frac{d\alpha_s}{ds}=g_s\alpha_s.$$
Since $\frac{d\alpha_s}{ds}=udz$, evaluating both sides of the above equation on $w_s$ shows that $w_s$ has vanishing $z$-component so must lie in $\Sigma$ along its characteristic line field $\ker\beta$. In particular, it is boundary-parallel. (At singular points where $\beta=0$, $\frac{d\alpha_s}{ds}|\xi_s$ vanishes so $w_s=0$.) Thus, $w_s$ integrates to a global isotopy of the form $\Psi_s=\psi_s\times\id_\R$ with $\psi_1=\id_\Sigma$. Since $\iota_{w_s}\alpha_s=0$,
$$\frac{d}{ds}\Psi^*_s\alpha_s=\Psi_s^*(d\iota_{w_s}\alpha_s+\iota_{w_s} d\alpha_s+\frac{d\alpha_s}{ds})=g_s\Psi_s^*\alpha_s.$$

\noindent Thus, in each cotangent space $T^*_p(\Sigma\times\R)$, $\Psi_s^*\alpha_s$ is a curve whose projection to the unit sphere is constant, so $\Psi_s^*\alpha_s$ is $\alpha_1$ up to scale as required. Along $\Gamma$, $u=0$ and any vector $v_0$ parallel to $\R$ lies in $\xi_s=\ker\beta$, so the first displayed formula shows $d\alpha_s(w_s,v_0)=0$. Since $d\alpha_s|\xi_s$ is nondegenerate and $w_s$ has no $z$-component, $w_s$ vanishes on $\Gamma$. Thus, $\psi_s$ fixes $\Gamma$ as required.
\end{proof}

 As we have seen, the dividing set of a convex torus with a tight neighborhood consists of a nonzero, even number of parallel essential curves. The Flexibility Theorem allows us to arrange a nonsingular foliation with one closed leaf in each component of $R_\pm$. These will be nondegenerate, attracting in $R_-$ and repelling in $R_+$, and their orientations can be chosen arbitrarily, allowing a variable number of Reeb components. Nondegenerate means that the derivative of the return map near each closed leaf is not 1, so small perturbations of the line field do not change the leaf structure of the foliation. We especially need the simplest case:

\begin{de}\label{stdForm} We will call a foliation on a torus $\Sigma$ {\em simple} if it has exactly two closed leaves, both nondegenerate and horizontal in some parametrization $\Sigma\approx \R^2/\Z^2$ for which the oriented line field has positive $x$-component everywhere. Its {\em direction} is the (primitive) class of a closed leaf in $H_1(\Sigma)$.
\end{de}

\noindent  Simplicity in a fixed direction is stable under small perturbations of the (nonsingular) line field. In a contact 3-manifold, a torus with a simple characteristic foliation is convex with the above form with two dividing curves, both horizontal, and no Reeb components.

\subsubsection{Legendrian realization}\label{Lerp} Another important application of the Flexibility Theorem is the {\em Legendrian Realization Principle}. Given a convex surface $\Sigma$ with $\partial\Sigma$ empty or Legendrian, suppose $C$ is a compact 1-manifold (not necessarily connected) in $\inter\Sigma$, intersecting $\Gamma$ transversely, and with $\partial C\subset\Gamma$. Then after an isotopy as in the theorem, we can assume $C$ is Legendrian with a standard local model in $\Sigma$, provided that it is {\em nonisolating}. (This means that every component of $\Sigma-(\Gamma\cup C)$ has closure intersecting $\Gamma$.) In our applications, $C$ will be an arc or a nonseparating circle disjoint from $\Gamma$; these are obviously nonisolating. The proof is to construct a suitable $\F$ and apply the Flexibility Theorem, so the addendum automatically applies. This principle gives us a way to add components to the dividing set of $\Sigma$ by {\em folding} (e.g.~\cite[Section~5.3.1]{H}). If $C$ is a nonisolating circle disjoint from $\Gamma$, we can assume it is Legendrian. Since it is disjoint from $\Gamma$, $\xi|C$ projects to $T\Sigma|C$ with the same sign everywhere. Consider an isotopy of $\Sigma$ supported near $C$ and fixing it, but rotating its normal vectors to reverse that sign everywhere on $C$. The resulting surface is still convex (for a new contact vector field), but has two more components of its dividing set since a tubular neighborhood of $C$ has switched between $R_\pm$. Clearly, there must be a nonconvex intermediate stage.

\subsubsection{Bypasses in contact 3-manifolds}\label{BypassCont} One of the main tools of Honda's approach toward classifying contact 3-manifolds is the {\em bypass} operation~\cite{H}. Let $C$ be an embedded arc in a convex surface $\Sigma$ as above, with $C$ intersecting $\Gamma$ transversely in exactly three points including $\partial C$. By the Legendrian Realization Principle (Section~\ref{Lerp}), we may assume $C$ is Legendrian without disturbing $\Gamma$ (after an isotopy of $C$ in $\Sigma$ and a $C^0$-small isotopy of $\Sigma$, by Addendum~\ref{C0}). Suppose we can find a half-disk $D$ transverse to $\Sigma$ with $D\cap \Sigma=C \subset \partial D$, such that the rest of $\partial D$ is a Legendrian curve transverse to $\Sigma$. Also suppose that $tb(\partial D)=-1$ and that $D$ has a tight neighborhood. Then we can make $D$ convex, and its dividing set will be a single arc with boundary on $\inter C$. This {\em bypass} $D$ has a neighborhood that can be given a standard model via the Flexibility Theorem, again by a $C^0$-small isotopy. (The isotopy of the addendum fixes the two corners of $D$ since the characteristic foliation is singular there, with $\beta$ and $w_s$ vanishing.) In the model, we can push $\Sigma$ across $D$. The resulting surface is again convex, but its dividing set has changed as in Figure~\ref{bypassFig} (where the bypass $D$ extends out of the page in the left diagram). This allowed Honda to simplify convex tori. For example, if the three horizontal curves in the figure belong to three distinct circles of $\Gamma$, the bypass move merges them into a single circle. For a torus whose dividing set consists of only two essential circles, the move performs a Dehn twist on $\Gamma$ (Figure~\ref{DehnBypass}). The main difficulty in applying such moves is finding the required bypass half-disks. Even when these half-disks do exist, they cannot be made arbitrarily small. Perhaps surprisingly, the situation is much different in Engel manifolds (Section~\ref{BypassSub}).

\begin{figure}
\labellist
\small\hair 2pt
\pinlabel {$C$} at 52 23
\endlabellist
\centering
\includegraphics{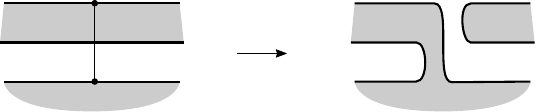}
\caption{A bypass, with shading denoting $R_+$ or $R_-$.}
\label{bypassFig}
\end{figure}

\begin{figure}
\labellist
\small\hair 2pt
\pinlabel {$C$} at 42 21
\endlabellist
\centering
\includegraphics{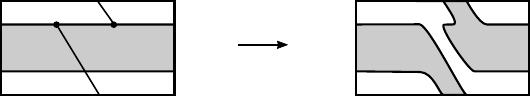}
\caption{A bypass on a torus changing $\Gamma$ by a Dehn twist.}
\label{DehnBypass}
\end{figure}

\subsection{Engel topology}\label{Engel}

An {\em Engel manifold} is a 4-manifold $M$ with a maximally nonintegrable 2-plane field $\D$. That is, $\E=[\D,\D]$ is 3-dimensional everywhere and is itself an even-contact structure, $[\E,\E]=TM$. By the discussion at the beginning of Section~\ref{Background}, $\E$ is the kernel of a 1-form $\alpha$, and $d\alpha(u,v)=\alpha[v,u]$ for vector fields in $\E$. Then $d\alpha|\D=0$ since $[\D,\D]$ vanishes mod $\E$. Since $d\alpha$ is well-defined and nowhere zero on $\E/\W$, the canonical line field $\W$ must then lie in $\D$. Thus, we obtain a canonical flag $\W\subset\D\subset\E\subset TM$. We assume these distributions are compatibly oriented by conventions discussed in Section~\ref{Orientation}.

\subsubsection{Prolongations}\label{Prolong}
The classical example of an Engel manifold is the Cartan {\em prolongation} $\pr (N,\xi)=\pr N$ of a contact 3-manifold $(N,\xi)$. Here $\p\co \pr N\to N$ is the unit circle bundle in the plane bundle $\xi\to N$, so a point $p\in \pr N$ is a point $\p(p)\in N$ together with an oriented line $\L_{\p(p)}$ in $\xi_{\p(p)}$. We take $\D_p$ to be the unique oriented 2-plane in $T_p\pr N$ projecting onto  $\L_{\p(p)}$. Then $\W$ is tangent to the fibers of $\pr N$, and $\E$ projects onto $\xi$, exhibiting the canonical contact projection of the even-contact structure $\E$ as described at the beginning of Section~\ref{Background}. For each $p\in \pr N$, we can identify a neighborhood of $\p(p)$ in $N$ with the standard contact manifold $(\R^3,dz+xdy)$. We then have $p\in\pr\R^3\subset\pr N$. We identify $\pr\R^3$ with $S^1\times\R^3$ by taking the $S^1$-coordinate $w$ of each $q\in\pr\R^3$ to be {\em minus} the polar coordinate determining the line $\L_{\p(q)}$ in $\xi_{\p(q)}$,  measured by projecting $\xi_{\p(q)}$ to the $xy$-plane. (We order the coordinates $(w,x,y,z)$, as justified in Section~\ref{Orientation}. Then the sign of $w$ is analogous to that of $-x=\frac{dz}{dy}$ for front projections in $\R^3$.) Now $\E$ is the kernel of $\alpha=dz+xdy$ and $\D=\ker\alpha\cap\ker\beta$, where $\beta=\sin(w)dx+\cos(w)dy$. Note that $\beta$ is a positive contact form on each slice with constant $z$ (although it is not canonical in an Engel manifold the way $\alpha$ is). The projection $\p$ simply forgets $w$, so $\W$ is tangent to the circles with $(x,y,z)$ constant.

\subsubsection{Local models of $\W$-transverse 3-manifolds}\label{LocalEngel}
We saw at the beginning of Section~\ref{Background} that any 3-manifold $N$ embedded $\W$-transversely in an even-contact 4-manifold $M$ inherits a contact structure $\xi=\E\cap TN$ that is preserved by flows along $\W$. If $M$ is itself a prolongation, its projection restricts to a local contactomorphism on any such $N$. For $M$ an arbitrary Engel manifold, $N$ also inherits an oriented line field $\L=\D\cap TN\subset\xi$ that rotates under such flows. This line field determines a canonical section $N\to\pr(N,\xi)$ that extends, by a flow along $\W$, to an embedding of a neighborhood of $N\subset M$ into $\pr(N,\xi)$ preserving the Engel structure. Thus, we can model any $\W$-transverse 3-manifold in an Engel manifold as a contact manifold with fourth coordinate represented by a variable line field $\L\subset \xi$, analogously to a front projection as in Section~\ref{Proj}. As an important special case, for any $p\in M$ we can choose $(N,p)\approx(\R^3,0)$ so that $\L_p$ corresponds to the positively oriented $x$-axis. The coordinates near 0 defined above on $\pr\R^3$ can now be used near $p$ in $M$. This exhibits the local equivalence of all Engel manifolds, as required by topological stability.

\begin{ob}\label{tight}
Let $M_\infty\approx\R^4$ be the universal cover of $\pr(\R^3,dz+xdy)$, and for $r\in\R^+$, let $M_r\subset M_\infty$ be the open subset defined by requiring $|w|<r$. Any bounded region of $M_r$ can be squeezed into a region over a preassigned neighborhood of $0\in\R^3$ by an automorphism of the form $(w,x,y,z)\mapsto(w,\epsilon x, \epsilon y,\epsilon^2 z)$. Then the above local coordinates show that every point in an Engel manifold has a neighborhood containing all bounded regions of $M_r$ for sufficiently small $r$. Thus, if every $M_r$ is overtwisted then so is every Engel manifold, so the manifolds $M_r$ with $r$ small are the most natural candidates for tight Engel manifolds and natural venues for transverse-torus 2-knot theory as in Questions~\ref{ques}.
\end{ob}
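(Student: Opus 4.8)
The plan is to establish three facts in sequence: that the indicated scaling maps are Engel automorphisms of $M_\infty$ restricting to each $M_r$; that they squeeze bounded regions as claimed; and that these combine with the local normal form for a general Engel manifold (established in the paragraphs just above) and the definition of overtwistedness to yield the final implication. The last of these is the only point requiring real care; the rest is checking coordinate formulas and lining up local models.

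First I would verify that $\sigma_\epsilon(w,x,y,z)=(w,\epsilon x,\epsilon y,\epsilon^2 z)$ preserves the Engel flag. On $M_\infty$ the flag is cut out (in the coordinates $(w,x,y,z)$ lifted from $\pr(\R^3,dz+xdy)$) by $\alpha=dz+xdy$ and $\beta=\sin(w)dx+\cos(w)dy$, and a direct computation gives $\sigma_\epsilon^*\alpha=\epsilon^2\alpha$ and $\sigma_\epsilon^*\beta=\epsilon\beta$; hence $\sigma_\epsilon$ preserves $\E=\ker\alpha$ and $\D=\ker\alpha\cap\ker\beta$, together with their orientations since $\epsilon>0$, and so the whole flag $\W\subset\D\subset\E\subset TM_\infty$. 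Because $\sigma_\epsilon$ fixes $w$, it preserves the condition $|w|<r$ and thus restricts to an automorphism of every $M_r$. Given a bounded region $R\subset M_r$, say $R\subset\{|w|<r\}\times K$ with $K\subset\R^3$ compact, choosing $\epsilon$ small enough that $\sigma_\epsilon(K)$ lies inside a preassigned neighborhood $V$ of $0\in\R^3$ puts $\sigma_\epsilon(R)$ inside $\{|w|<r\}\times V$, which is the squeezing claim.

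Next I would invoke the local normal form from the discussion preceding the observation: every point $p$ of an Engel manifold $M$ has a neighborhood identified, as an Engel manifold, with a neighborhood of $0$ in $\pr(\R^3,dz+xdy)=S^1\times\R^3$ via coordinates $(w,x,y,z)$ with $\L_p$ along the positive $x$-axis. For $|w|<\pi$ this neighborhood agrees (in the same coordinates) with the corresponding slab of the universal cover $M_\infty$, so after shrinking it we may assume it contains $\{|w|<r_0\}\times V_0$ for some $r_0\in(0,\pi)$ and some neighborhood $V_0$ of $0$, where $r_0$ and $V_0$ depend only on $(M,p)$. Combining with the previous paragraph: for any $r\le r_0$ and any bounded region $R\subset M_r$, the automorphism $\sigma_\epsilon$ (for suitable small $\epsilon$) carries $R$ Engel-isomorphically into $\{|w|<r\}\times V_0$, hence into the chosen neighborhood of $p$. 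This is exactly the assertion that every point of every Engel manifold has a neighborhood into which all bounded regions of $M_r$ Engel-embed, once $r$ is below the threshold $r_0$.

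Finally, for the implication that overtwistedness of every $M_r$ forces every Engel manifold to be overtwisted, the key input from \cite{PV} is that overtwistedness is \emph{witnessed by an Engel embedding of a fixed compact model region}. If every $M_r$ is overtwisted, then for $r$ as small as we wish that model sits inside $M_r$ as a bounded region; applying the previous paragraph with $r$ below the threshold for a chosen point $p$ of an arbitrary Engel manifold $M$, the model Engel-embeds into a neighborhood of $p$ in $M$, so $M$ is overtwisted. The closing sentence — that the $M_r$ with $r$ small are therefore the natural candidates for tightness, and the natural venues for transverse-torus $2$-knot theory — is then a purely formal consequence. I expect the main (indeed the only) subtlety to be this last step: keeping straight that overtwistedness is a property detected by a bounded, hence squeezable, piece rather than by the noncompact manifold as a whole, so that the scaling argument genuinely applies.
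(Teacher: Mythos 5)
Your proposal is correct and follows essentially the same route as the paper's (mostly implicit) reasoning: verify that the anisotropic scaling $\sigma_\epsilon$ is an Engel automorphism fixing $w$, use it to squeeze bounded regions over a small neighborhood of $0$, transport this into a general Engel manifold via the local normal form $\pr(\R^3,dz+xdy)$, and conclude using the fact that overtwistedness is detected by an Engel embedding of a fixed compact model (the point from \cite{PV} you correctly isolate as the only subtle step). The explicit pullback computation $\sigma_\epsilon^*\alpha=\epsilon^2\alpha$, $\sigma_\epsilon^*\beta=\epsilon\beta$ is exactly the right verification, and your treatment of the threshold $r_0<\pi$ matches the paper's intent.
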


\subsubsection{Characteristic foliations of surfaces in even-contact 4-manifolds}\label{EngelChar}
 Every surface $\Sigma$ embedded (or immersed) in an even-contact 4-manifold $M$ inherits a singular {\em characteristic line field} $\E\cap T\Sigma$, whose singularities occur at tangencies of $\Sigma$ with $\E$. This can be singular everywhere. (Given a Legendrian knot $K$ in a contact manifold $N$, consider the torus in $\pr N$ made by restricting the circle bundle to $K$.) However, generically there are only finitely many such tangencies. Away from these, the resulting {\em characteristic foliation} (or {\em $\E$-foliation}) is oriented by an orientation of $\Sigma$ (via the orientations on $\E$ and $M$; see Section~\ref{OrientL}). If $\Sigma$ is $\E$-transverse, it must be a torus since the foliation trivializes its tangent bundle. Any surface $\Sigma\subset M$ without $\W$-tangencies can be extended to a 3-manifold $N\approx\Sigma\times \R$ transverse to $\W$, which canonically inherits a contact structure $\xi=\E\cap TN$. Then the $\E$-foliation on $\Sigma$ agrees with the {\em $\xi$-foliation}, the characteristic foliation in the sense of contact topology (Section~\ref{SurfCont}). Since the germ of $(N,\xi)$ is preserved by flows along $\W$, it is uniquely determined (and also obtained by projecting out the flow). Thus, concepts such as convexity and dividing sets canonically extend to surfaces without $\W$-tangencies in even-contact 4-manifolds.

\subsection{Orientation conventions}\label{Orientation}
For most of our discussion, it suffices to know that orientation conventions exist, but we now describe our conventions carefully for completeness, and to determine some signs of invariants in Sections~\ref{Dnu} and \ref{DT}. These conventions are chosen for optimal compatibility with the standard orientations of smooth and contact topology. We first orient the flag of $\pr\R^3$ for the standard contact structure on $\R^3$, then pass to the general case via the local coordinates from Section~\ref{LocalEngel} (which depend on two binary choices if the orientations are not prespecified).

\subsubsection{The local model} As in Section~\ref{Prolong}, we identify $\pr\R^3$ with $S^1\times\R^3=\R^4/2\pi\Z$. We write the $S^1$-factor first so that when a $\W$-transverse 3-manifold $N$ flows by a vector field in the positive $\W$-direction, the leading boundary of the resulting embedded $I\times N$ has the same orientation as $N$, by the standard ``outward normal first'' convention. As in Section~\ref{Prolong}, we describe the flag using the forms $\alpha=dz+xdy$ and $\beta=\sin(w)dx+\cos(w)dy$. (The other common convention $\alpha'=dz-ydx$ for the standard contact $\R^3$ merely differs from this by a $\pi/2$-rotation of the $xy$-plane. Our convention has the advantage that front projection is the obvious projection into the page if we draw the axes as usual with the $x$-axis pointing out of the page. Common conventions for the standard Engel chart are obtained from ours by linearizing at $w=n\pi/2$, interpreting $w$ as a slope instead of an angle.) To orient the Engel flag, let $W$ and $Z$ be the unit vector fields parallel to the positively oriented $w$- and $z$-axes, and let $X$ and $Y$ be the lifts to $\xi$ in each $\{w\}\times\R^3$ of the vector fields $(\cos(w),-\sin(w))$ and $(\sin(w),\cos(w))$, respectively, in the $xy$-plane. Then $(W,X,Y,Z)$ is the standard ordered basis when $w=x=0$. Everywhere, we have $\alpha(Z)=\beta(Y)=1$ and otherwise these vector fields lie in the kernels of these forms. Thus, the first $k$ of these vector fields span the $k$-dimensional distribution in the flag $\W\subset\D\subset\E\subset TM$. We orient each distribution, and hence, each of the ten  quotients of pairs, using the corresponding subset of vector fields in the given order. (This orients $\W$-transverse 3-manifolds, $\D$-transverse surfaces and $\E$-transverse curves by putting their normal orientations first to compare with the ambient orientation. In general, we orient quotients by writing the kernel first. We write transverse intersections so that the normal orientations of the intersection within each of the two factors, in the given order, followed by the  orientation of the intersection, give the ambient orientation. For example, $\xi=\E\cap T\R^3=-T\R^3\cap\E$ exhibits $TM$ as ${\rm span}(W,Z,X,Y)$. The order matters when both factors have odd codimension.) It seems reasonable to use alphabetical order for the span of any subset of these vector fields, e.g., $\ker\beta={\rm span}(W,X,Z)$, although we do not need these additional cases. The rest of this section transfers the above choice of ten orientations to arbitrary Engel manifolds, and examines the relevant consequences of this choice and why it is especially natural.

\subsubsection{The general case} Two of our ten quotients are canonically oriented in all Engel manifolds (even without our standing orientability condition) by a universal choice. Recall from the beginning of Section~\ref{Background} that for vector fields $u,v$ in a hyperplane field $\ker\alpha$, $d\alpha(u,v)=\alpha[v,u]$. Contact 3-manifolds are then canonically oriented by the ordered local bases
$$(u,v,[v,u])$$
for any local bases $(u,v)$ of $\xi$ (whose choice does not affect the result). This is the standard convention for contact 3-manifolds $N$, since choosing the sign of $\alpha$ orients $\xi$ (through $d\alpha$) and $TN/\xi$ compatibly, so that their oriented sum is $TN$. In Engel manifolds $M$, we use the same convention to canonically orient $TM/\W$, where $(u,v)$ is any basis for $\E/\W$. This implies $\W$-transverse 3-manifolds $N$ inherit their usual contact orientation by the identification $TN\cong (TM/\W)|N$, and $\alpha$ orients $\E/\W$ and $TM/\E$ compatibly. We canonically orient $\E=[\D,\D]$ by the same convention with $u$, $v$ in $\D$, and $\alpha$ replaced by $\beta$ given by any local coordinates as above. (The opposite convention can be found in the literature, but ours has the advantages that it orients $\E$ so that any local hypersurface tangent to $\E$ at a point inherits its $\beta$-contact orientation from $\E$, and choosing the sign of $\beta$ orients $\D$ and $\E/\D$ compatibly with this.) Note that our orientation convention for $\pr\R^3$ in the previous paragraph agrees with these canonical orientations on $\E$ and $TM/\W$ (since $d\beta(W,X)=1=\beta(Y)$).

Now we can analyze the remaining orientations in a general Engel manifold. Since $TM/\W$ and $\E$ are now canonically oriented in all Engel manifolds, orienting $TM$ is equivalent to orienting $\W$ or $\E/\W$, the latter corresponding to $\xi$ in any $\W$-transverse 3-manifold. This choice and the orientation of $\D$ are arbitrary: There is a pair of involutions on $\pr\R^3$ defined by reversing the signs of coordinates $w,x,z$ or $x,y$, respectively. These preserve the Engel structure but independently reverse the two orientations in question, as well as the signs of the forms $\alpha$ and $\beta$. Independently choosing the above pair of orientations allows four possible ways to orient the flag, orienting all ten quotients. We always assume these choices have been made, with $\alpha$ chosen to evaluate positively on $TM/\E$. We then choose our local coordinates so that the orientations are described by our convention for $\pr\R^3$, and $\beta$ evaluates positively on $\E/\D$.

\begin{Remarks}\label{Klein}
(a) If we allow nonorientable Engel flags, we can construct transverse Klein bottles as quotients of the $yz$-plane in $\pr\R^3$, and arrange either $TM$ or the normal bundle (hence $\D$) to be orientable (but not both). For the former, mod out unit $y$-translation, and then unit $z$-translation composed with the involution reversing the signs of $x,y$. For the latter, first mod out $z$-translation, then $y$-translation composed with reversing $w,x,z$.

\item[b)] More generally, the above flexibility of orientations allows Engel structures with various forms of nonorientability. Up to homotopy, every formal Engel structure (suitably defined in the presence of nonorientability) can be made into a loose \cite{CPP} or overtwisted \cite{PV} Engel structure. Prolongations supply concrete examples:

\item[i)] Prolongations of nonorientable contact plane fields (defined as in Section~\ref{Prolong} but with  $\alpha$ globally defined only with values in a twisted line bundle) have $\W$ nonorientable but $\D/\W$ canonically oriented, so $\D$ and $TM$ are also nonorientable with $$w_1(\D)=w_1(\W)=\p^*w_1(\xi)=w_1(TM)\ne0.$$
\item[ii)] Every prolongation has a free involution obtained by reversing the orientation of the lines $\L$, i.e., the fiberwise antipodal map. The quotient has nonorientable $\D$ with $w_1(\D)$ having nonzero value on the fibers, but if $\xi$ is orientable then so is $TM$, and $$w_1(\D)\ne0, \ \ w_1(TM)=0.$$
\item[iii)] If, instead, $\xi$ and hence $TM$ are nonorientable in (ii), the previous computation still applies near a fiber, so $$0\ne w_1(\D)\ne w_1(TM)\ne0.$$ Thus, we have exhibited three different types of behavior of these classes. The example in (a) with $\D$ orientable but $TM$ nonorientable exhibits the remaining case $$w_1(\D)=0, \ \ w_1(TM)\ne0$$ for Engel structures exhibiting nonorientability.
\end{Remarks}

\subsubsection{Surfaces in contact 3-manifolds}\label{OrContSurf} The characteristic foliation $\F$ on a surface $\Sigma$ in a contact 3-manifold canonically inherits an orientation from orientations of $\Sigma$ and $\xi$. The standard convention is that a vector positively tangent to $\F$ (so in $\ker\alpha$) followed by a vector on which $\alpha$ is positive should be a positive tangent basis for $\Sigma$. This convention is chosen so that positive singularities (at which $\xi_p=T_p\Sigma$ as oriented planes) have positive divergence. (The divergence is defined using any vector field on $\Sigma$ that when contracted with some positive area form for $\Sigma$ yields $\alpha|\Sigma$. Such a vector field is positively tangent to the nonsingular part of $\F$.) It ultimately follows that in the convex case, $R_+$ (bounded by the dividing set with $\F$ directed transversely outward) contains all of the positive singularities and none of the negative ones. As an example of orienting a characteristic foliation, suppose $\Sigma$ bounds a tubular neighborhood of a transverse knot $K$ as in the proof of Proposition~\ref{helices}. Then $K$ is canonically oriented (so that $\alpha|K$ is positive), and $\Sigma$ is foliated by helices that are left-handed relative to a given longitude $\lambda\subset\Sigma$ when the tubular neighborhood is sufficiently narrow. It is natural to orient $\lambda$ to be parallel to $K$, and a meridian $\mu\subset\Sigma$ to be right-handed (linking $K$ positively). Then $\mu\cdot\lambda=1=-\lambda\cdot\mu$ in $\Sigma$, where $\Sigma$ is oriented as the boundary of the tubular neighborhood with its contact orientation. The standard orientation convention now orients the foliation toward the quadrant determined by $\mu$ and $-\lambda$.

\subsubsection{Surfaces in Engel manifolds}\label{OrientL} Now suppose $\Sigma$ is a surface in an even-contact 4-manifold $M$. If $\Sigma$ has no $\W$-tangencies, it canonically lies in a contact 3-manifold $N$, and its $\E$-foliation corresponds to the $\xi$-foliation in $N$ (Section~\ref{EngelChar}). We would like these two characterizations of the foliation to determine the same orientation. This is achieved by modding out $\W$ and applying the previous paragraph. Thus, a positive tangent vector to the $\E$-foliation, followed by a tangent vector to $\Sigma$ on which $\alpha$ is positive should again give the preassigned orientation on $\Sigma$. If $\Sigma$ is transverse to an Engel plane field $\D$, then its orientation is inherited by identifying $\D|\Sigma$ as the normal bundle and $(TM/\D)|\Sigma$ with $T\Sigma$. Modding out $\W$ identifies this orientation with the one on $\Sigma$ in $N$ for which the oriented line field $\L=\D/\W$ is positively transverse. For example, if $\Sigma$ projects to a boundary in $N$, preserving orientation, then $\L$ must be outward transverse. Alternatively, in our local coordinates, $\beta\wedge\alpha$ is a positive area form on $TM/\D$ and hence on $\Sigma$ in $M$, so $\beta$ is positive on the characteristic line field of a transverse surface. This is also consistent with identifying the characteristic line field as $\E/\D\subset TM/\D$ and locally applying our convention orienting these by $Y$ and $(Y,Z)$.


\section{Making surfaces transverse}\label{Main}

To prove Theorem~\ref{main0}, we need to isotope a given torus $\Sigma$ in an Engel manifold to be transverse. Since $\Sigma$ typically has regions that are already transverse but with incompatible orientations, we do not attempt to achieve transversality directly, but instead adapt the notion of transverse pushoffs of Legendrian knots  from Section~\ref{KnotOps}. The corresponding notion of Legendrian tori and their transverse pushoffs is introduced in Section~\ref{Surface}, along with some useful examples. To make $\Sigma$ Legendrian, we first need to control its characteristic foliation, which we do in the setting of even-contact 4-manifolds (Section~\ref{Bypass}). Our main tool for this is the bypass operation from Section~\ref{BypassCont}, which can be done more easily in this 4-dimensional setting (Section~\ref{BypassSub}). The proof of Theorem~\ref{main0} is then completed in Section~\ref{MainProof}.

\subsection{Surfaces in Engel manifolds}\label{Surface} 
Recall from Section~\ref{Knots} that in a contact 3-manifold $(M,\xi)$, a knot $K$ is Legendrian (resp.\ transverse) if $\dim(\xi\cap TK)$ is 1 (resp.\ 0) everywhere on $K$. We would like to similarly understand embedded surfaces suitably compatible with an Engel structure. In contrast with Legendrian knots, a surface $\Sigma$ cannot be tangent to $\D$ on an open subset of $\Sigma$, since $\D$ would be closed under Lie bracket there. Thus, the two corresponding extremal cases are given by the following definitions, which we discuss in the two subsequent sections.

\begin{de}\label{Leg}
A surface $\Sigma$ in an Engel manifold $M$ is {\em Legendrian} if $\dim(\D\cap T\Sigma)\ge 1$ everywhere and {\em transverse} (or {\em $\D$-transverse}) if the dimension is zero everywhere.
\end{de}

Kegel \cite{K} uses the term ``Legendrian'' for surfaces that are tangent to an even-contact structure. These could be called {\em $\E$-Legendrian} when necessary to avoid confusion with the {\em $\D$-Legendrian} surfaces satisfying our definition above. We will have no use of the former notion. In fact, we need our Legendrian surfaces to be transverse to $\E$.

\subsubsection{Legendrian surfaces} Our present interest in Legendrian surfaces is as a tool for constructing transverse surfaces. Thus, we make no attempt at a general study of the former, but merely note that Legendrian surfaces can have complicated interactions with the Engel flag. As a nongeneric example, over a Legendrian knot in a contact manifold $N$, the restriction of the circle bundle comprising the prolongation $\pr N$ (Section~\ref{Prolong}) is a Legendrian torus that satisfies $\W|\Sigma\subset T\Sigma\subset\E|\Sigma$ and is tangent to $\D$ along a pair of sections. An $\E$-transverse but still nongeneric example is given by the circle bundle over a transverse knot in a prolongation (Example~\ref{vert}). For an $\E$-transverse Legendrian surface, $\E\cap T\Sigma=\D\cap T\Sigma$ is 1-dimensional, so the characteristic line field is nonsingular. Then the tangent and normal bundles are both trivialized by $\D$, so such a surface must be a torus with trivial normal bundle. Generically, we expect such surfaces to be tangent to $\W$ along 1-manifolds (unlike the previous example) since $\W$ is given by a section of the unit circle bundle of $\D$.

\begin{example}\label{liftReeb}
In prolongations, there is a useful method for generating $\E$-transverse Legendrian tori. Let $\Sigma$ be any torus immersed in a contact 3-manifold $N$ with nonsingular characteristic foliation. (For example, any embedded torus with a tight neighborhood can be isotoped to such a form by the Flexibility Theorem~\ref{flex}.) Any such $\Sigma$ has a unique $\E$-transverse  {\em Legendrian lift} to an immersed torus in the prolongation of $N$: The given orientations on $\Sigma$ and $\xi$ orient the characteristic line field $\L$ in $\xi$, which we then interpret as a section of the unit circle bundle $\pr N$ of $\xi$ over $\Sigma$, cf.\ Section~\ref{LocalEngel}. (Reversing the orientation of $\Sigma$, and hence $\L$, changes the lift by an isotopy: a $\pi$-rotation around the circle fibers of $\pr N$.) The Legendrian lift is embedded whenever $\Sigma$ is. (Generically, embedding only fails at finitely many points where $\xi$ is tangent to a double curve of $\Sigma$.) Beware that small changes in $\Sigma$ can change the homotopy class of the lift. To understand this, first notice that $\xi$-transversality of $\Sigma$ is essential. At a generic singularity, the characteristic line field has degree $\pm 1$, resulting in a puncture where the lift is bounded by a fiber of the prolongation. As an example of a ``small'' change of $\Sigma$, suppose $\Sigma$ is convex in $N$, and its characteristic foliation has a repelling closed leaf $L$. By the Flexibility Theorem~\ref{flex}, we can change the foliation near $L$, after which $L$ is still repelling but is oppositely oriented. We can do this through a 1-parameter family of convex surfaces, although singularities in the foliation will transiently appear. The resulting Legendrian lifts are not homotopic. To see this, consider a local model intermediate state: the $xy$-plane in $(\R^3,dz+xdy)$ mod unit $y$-translation, with $L$ given by the $y$-axis. Then $L$ consists entirely of singular points, with the rest of the foliation parallel to the $x$-axis and oriented outward from $L$. Perturbing this plane near the $y$-axis by rigidly rotating a small neighborhood slightly about the $y$-axis changes the latter to a repelling leaf of a nonsingular foliation. However, the direction of the leaf depends on the direction of the small rotation. When we lift the unperturbed plane to $\pr N$, the plane rips along the $y$-axis, with the two edges lifting antipodally (corresponding to the outward orientations of the foliation on the two half-planes). Each of the two perturbations fills in the Legendrian lift with an annulus spanning the gap. But the two use annuli on opposite halves of the fibers, as seen by examining the directions of the characteristic lines in the two cases. Thus, the homotopy classes of the two Legendrian lifts differ by $\PD[L]$ in $H^1(\Sigma)\cong[\Sigma,S^1]$. In contrast, every isotopy through embedded tori with nonsingular foliations determines an isotopy of their Legendrian lifts.
\end{example}

\subsubsection{Transverse surfaces}\label{TransSurf} As with $\E$-transverse Legendrian surfaces, a transverse surface $\Sigma$ must be a torus with trivial normal bundle. This is because its normal and tangent bundle $\nu\Sigma\cong\D|\Sigma$ and $T\Sigma\cong(TM/\D)|\Sigma$ are trivialized by the Engel flag. These isomorphisms also canonically orient $\Sigma$ and its normal bundle (analogously to transverse knots in contact 3-manifolds, Section~\ref{Knots}). Unlike Legendrian surfaces, a transverse surface can never be tangent to $\W$ or $\E$. In particular, its characteristic line field $\E\cap T\Sigma$ is nonsingular and canonically oriented (via the canonical orientation of $\Sigma$). Just as transverse knots can be constructed as pushoffs of Legendrian knots (Section~\ref{KnotOps}), we have the following:

\begin{prop}\label{push}
Let $\Sigma$ be an $\E$-transverse Legendrian torus in an Engel manifold $M$. Then there is an embedding $\R\times \Sigma\to M$ such that $\{t\}\times \Sigma$ agrees with $\Sigma$ when $t=0$ and is transverse otherwise, with induced orientation depending on the sign of $t$.
\end{prop}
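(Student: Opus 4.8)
The plan is to mimic the Lie-derivative construction of the transverse pushoff of a Legendrian knot, working with the even-contact form $\alpha$ (kernel $\E$) rather than the contact form. Since $\Sigma$ is $\E$-transverse, the pullback $\alpha|\Sigma$ is a nowhere-zero $1$-form on the torus. I would like to find a vector field $v$ along $\Sigma$, transverse to $T\Sigma$, whose flow pushes $\Sigma$ off itself so that the pushed-off copies become $\D$-transverse. The natural candidate is a vector field $v$ taking values in $\E$ (so that $\alpha(v)=0$ to first order is not what we want — rather we want the deformation to leave $\E$ in a controlled way). Concretely, I would work locally in the standard Engel chart $(w,x,y,z)$ with $\alpha=dz+x\,dy$, $\beta=\sin w\,dx+\cos w\,dy$, and $\D=\ker\alpha\cap\ker\beta$, and try to write $v$ as a combination of the flag vector fields $W,X,Y,Z$ introduced in Section~\ref{Orientation}.

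The key steps, in order, are: (1) Observe that since $\Sigma$ is $\E$-transverse and Legendrian, $T\Sigma\cap\E=\D\cap T\Sigma$ is a nonsingular oriented line field $\L_\Sigma$ on $\Sigma$, and $T\Sigma$ meets $\E$ transversely in a complementary line bundle; pick a vector field $u$ along $\Sigma$ spanning $\L_\Sigma$ and a vector field $v_0$ along $\Sigma$ in $T\Sigma$ with $\alpha(v_0)>0$ everywhere, so $(u,v_0)$ frames $T\Sigma$. (2) Choose a vector field $v$ along $\Sigma$, transverse to $\Sigma$ in $M$, with $v\in\D$ (i.e. a nonzero section of $\D|\Sigma$ that is not tangent to $\Sigma$); this exists because $\D\cap T\Sigma$ is one-dimensional and $\D$ is two-dimensional, so $\D|\Sigma$ is a rank-$2$ subbundle of $TM|\Sigma$ meeting $T\Sigma$ in a line, hence contributing a transverse direction. (3) Let $F\co\R\times\Sigma\to M$ be the time-$t$ flow of any extension of $v$ (or just $F(t,p)=\exp_p(tv_p)$ in some metric); $F$ is an embedding for $t$ in a neighborhood of $0$, and $F(0,\cdot)$ is the inclusion. (4) Compute, using the Lie derivative / Cartan's formula, the pullback of the flag along $\{t\}\times\Sigma$ for small $t\ne0$: one needs to check that $\D\cap T(F(\{t\}\times\Sigma))=0$, equivalently that $F_*$ of a frame of $T\Sigma$ together with... — the honest computation is to show the $2\times 2$ matrix expressing $F_*(u),F_*(v_0)$ in terms of a local frame of $\D$ becomes nonsingular (indeed of maximal rank relative to the complementary directions) for $t\ne0$ small, which reduces to a first-order-in-$t$ nondegeneracy statement coming from $d\alpha(v,u)\ne0$ or the analogous $d\beta$ condition (the Engel nonintegrability $[\D,\D]=\E$ is what makes this derivative nonzero). (5) Identify the induced orientation on $\{t\}\times\Sigma$ using the conventions of Section~\ref{Orientation}: the transverse surface orientation comes from identifying $\nu\Sigma\cong\D|\Sigma$ and $T\Sigma\cong(TM/\D)|\Sigma$, and the sign of the relevant Jacobian flips with the sign of $t$ because $v$ points into $\D$ on one side and $-v$ on the other; so the two half-tori $t>0$ and $t<0$ receive opposite orientations, as claimed.

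The main obstacle I expect is Step~(4): verifying that the pushed-off tori are genuinely $\D$-transverse (not merely $\E$-transverse, which is the easy part) and that this holds for \emph{all} small $t\ne0$ simultaneously with $F$ an embedding. The subtlety is that $\D$ is only a plane field, not a form's kernel globally, so one must either work in the local Engel charts and patch, or package the condition as nondegeneracy of a section of $\mathrm{Hom}(T\Sigma,(TM/\E)\oplus(\E/\D))|\Sigma$ and show the $t$-derivative at $t=0$ of its ``$\E/\D$-component'' along $\L_\Sigma$ is nonzero. That derivative is exactly $\beta([v,u])$ (in local coordinates), which is nonzero precisely because $u$ spans $\D\cap T\Sigma$, $v$ is the complementary direction in $\D$, and $[\D,\D]=\E$ forces the $\beta$-component of the bracket to be nonvanishing — so the obstruction dissolves, but making this frame-independent and uniform over the compact torus $\Sigma$ is where the care is needed. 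A secondary, purely bookkeeping, obstacle is pinning down the orientation sign in Step~(5) to match the stated ``depending on the sign of $t$,'' which requires careful use of the ordered-basis conventions $(W,X,Y,Z)$ fixed in Section~\ref{Orientation}.
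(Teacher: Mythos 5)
Your proposal is the paper's proof: frame $\D$ near $\Sigma$ by a vector $u$ (the paper's $v$) tangent along $\D\cap T\Sigma$ and a vector $v$ (the paper's $u$) transverse to $\Sigma$, flow by the transverse one, and use $[\D,\D]=\E$ to see that the first-order correction $[v,u]$ is nonzero modulo $\D$, so the flowed torus becomes $\D$-transverse with orientation depending on the sign of $t$. One small slip in step~(4): $d\alpha(v,u)$ vanishes identically for $u,v\in\D$ (since $[\D,\D]\subset\E=\ker\alpha$), so only the $\beta$-component, i.e.\ the $\E/\D$-component, carries the nondegeneracy --- which you correctly identify as $\beta([v,u])$ in your closing paragraph, where you also rightly note that the transversality test is nonsingularity of the projection of $F_*(u),F_*(v_0)$ to a complement of $\D$, not an expansion in a frame of $\D$ itself.
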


\noindent In particular, there are transverse tori of both orientations $C^\infty$-close to $\Sigma$. For a preassigned orientation on $\Sigma$, we denote the corresponding {\em transverse pushoff} by $\tau \Sigma$. The characteristic line field on $\{t\}\times \Sigma$ is $C^\infty$-close to that of $\Sigma$, although its orientation depends on the sign of $t$.

\begin{proof}
The nonsingular characteristic line field $\D\cap T\Sigma=\E\cap T\Sigma$ is orientable. Thus, near $\Sigma$ in $M$, there is a basis $(u,v)$ for $\D$ where $v|\Sigma$ lies in $\D\cap T\Sigma$. Since $[\D,\D]=\E$, $[u,v]$ is never 0 in $\E/\D$. (Recall that for $\beta$ as in Section~\ref{Prolong} and $u,v$ in $\D\subset\ker\beta$, $\beta[u,v]=d\beta(v,u)$ is determined pointwise.) In particular, $[u,v]$ never lies in the span of $T\Sigma$ and $u$, whose intersection with $\E$ is $\D$. But $[u,v]$ is the Lie derivative of $v$ by $u$. Thus, as $\Sigma$ flows by $u$, it immediately becomes transverse to $\D$, with orientation depending on the sign of $t$.
\end{proof}

\begin{example}\label{transLift}
Given a torus $\Sigma$ embedded with nonsingular characteristic foliation in a contact 3-manifold $(N,\xi)$, we can define its {\em transverse lift} to $\pr N$ by applying Proposition~\ref{push} to its Legendrian lift from Example~\ref{liftReeb}. The vector field $u$ can be chosen to point positively along $\W$, so that the push is a small fiberwise rotation in $\pr N$ (clockwise, by the convention of Section~\ref{Prolong}). This perturbs $\L$ slightly around $\xi$ in the positive transverse direction to $\Sigma$ (Section~\ref{OrientL}). By a further push along the fibers, we can reach any $\L$ in $\xi$ positively transverse to $\Sigma$, so the lift is characterized up to vertical transverse isotopy by requiring $\L$ to be positively transverse to $\Sigma$. (In particular, this defines the transverse lift without using the proposition.) The same construction still gives a transverse embedding if $\Sigma$ is generically immersed  with all double curves transverse to $\xi$, except that the characterization requires $\L$ to be sufficiently close to the tangent planes of $\Sigma$. (See the next remark.) Since a transverse lift is vertically isotopic to the corresponding Legendrian lift, its homotopy class can change as in Example~\ref{liftReeb} when $\Sigma$ is isotoped through convex surfaces with transient singularities. However, isotopies preserving nonsingularity of the foliation can be lifted.
\end{example}

\begin{Remark}
Embedded transverse tori in $\pr N$ can be characterized in $N$: Such a torus $\hat\Sigma$ has no $\W$-tangencies, so it projects to an immersed torus $\Sigma$ in $N$. Then $\hat\Sigma$ can be recovered from the induced line field $\L$ in $\xi|\Sigma$, which is transverse to $\Sigma$. (This transversality implies the characteristic foliation is nonsingular.) At each double point of $\Sigma$, the two (oriented) lines of $\L$ are different since $\hat\Sigma$ is embedded. Conversely, any such pair $(\Sigma,\L)$ determines a transverse torus in $\pr N$. When $\Sigma$ is embedded, its transverse lift is vertically transversely isotopic to the original $\hat\Sigma$, but this need not be true for immersions: If two sheets of $\Sigma$ intersect in a circle $C$, the two line fields $\L$ along $C$, if never antiparallel, will span $\xi|C$. Then perturbing the transverse line fields near $C$ can push them past each other, reversing the orientation on $\xi|C$ that they determine. This has the effect of reversing the order of the two sheets in the $w$-direction (a crossing change in the directions transverse to $C$). This can often change the topological knot type of $\hat\Sigma$, as can be exhibited in Example~\ref{PV}(a) by ranging $K$ over transverse knots with the same front projection.
\end{Remark}

\subsection{Controlling the characteristic foliation}\label{Bypass}

To make a surface transverse, we first need to control its characteristic foliation. We will do this in the setting of even-contact 4-manifolds by adapting the bypass operation from contact 3-manifold topology (Section~\ref{BypassCont}). As a preliminary step, we first arrange the surface to be convex in some $\W$-transverse 3-dimensional submanifold, which is easy if there are no $\W$-tangencies.

\subsubsection{Eliminating $\W$-tangencies}

\begin{lem}\label{noW}
Let $\Sigma$ be a surface in a 4-manifold $M$ with a line field $\W$. Then $\Sigma$ can be isotoped to avoid tangencies with $\W$ if and only if the normal bundle $\nu\Sigma$ is trivial. The isotopy can be assumed $C^0$-small.
\end{lem}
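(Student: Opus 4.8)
The plan is to prove both directions by relating the $\W$-tangencies of $\Sigma$ to the normal Euler number $e(\nu\Sigma)$, equivalently the self-intersection $\Sigma\cdot\Sigma$. Observe that $\Sigma$ is $\W$-transverse exactly when the composite bundle map $\W|\Sigma\hookrightarrow TM|\Sigma\to\nu\Sigma$ (projection along $T\Sigma$) is nowhere zero, i.e.\ is a nowhere-zero section $s$ of $\operatorname{Hom}(\W|\Sigma,\nu\Sigma)\cong\W^*|\Sigma\otimes\nu\Sigma$. This is a rank-$2$ bundle whose $w_1=2w_1(\W|\Sigma)+w_1(\nu\Sigma)$ vanishes (as $M$ and $\Sigma$, hence $\nu\Sigma$, are oriented), so it is an oriented plane bundle over the closed oriented surface $\Sigma$. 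Tensoring an oriented plane bundle $V$ over $\Sigma$ by a real line bundle $L$ changes the Euler number by the image of $w_1(L)$ under the Bockstein $H^1(\Sigma;\Z/2)\to H^2(\Sigma;\Z)$ (seen by choosing a complex structure on $V$ and computing first Chern classes), and this Bockstein vanishes since $H^2(\Sigma;\Z)$ is torsion-free; hence $e(\operatorname{Hom}(\W|\Sigma,\nu\Sigma))=e(\nu\Sigma)$. So $s$ can be nowhere zero only if $e(\nu\Sigma)=0$, i.e.\ only if $\nu\Sigma$ is trivial, which gives the ``only if'' direction. (When $\W|\Sigma$ is orientable this is more transparent: pushing $\Sigma$ off itself along $\W$ inside a tubular neighborhood produces a disjoint homologous copy, so $\Sigma\cdot\Sigma=0$.)

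For the converse, assume $\nu\Sigma$ is trivial. First I would isotope $\Sigma$ by a $C^\infty$-small amount so that $s$ vanishes transversely; then the $\W$-tangencies are finitely many, each with local index $\pm1$, and their signed total is $e(\nu\Sigma)=0$, so they fall into pairs of opposite index. I would then cancel them one pair at a time. Given such a pair $p_+,p_-$, choose an embedded arc $\gamma\subset\Sigma$ joining them and disjoint from the other tangencies (possible since a surface stays connected after deleting finitely many points), and take a tubular neighborhood $B\cong\R^4$ of $\gamma$ in $M$ small enough that $D:=\Sigma\cap B$ is a disk containing $\gamma$. On the disk $D$, the section $s$ has total index $(+1)+(-1)=0$, so $s|_{\partial D}$ is null-homotopic into the complement of the zero section; hence $\operatorname{Hom}(\W|\Sigma,\nu\Sigma)|_D$ has a nowhere-zero section $s'$ agreeing with $s$ near $\partial D$ --- equivalently, a line subbundle of $\nu\Sigma|_D$ agreeing with the $\W$-direction near $\partial D$. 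Using this, a $C^0$-small ``finger move'' of $D$ rel $\partial D$, supported near $\gamma$, pushes $D$ off both tangencies without creating new ones. Extending by the identity and iterating over all the pairs yields a $\W$-transverse surface reached by a $C^0$-small isotopy. (The immersed version of this is the $h$-principle for $\W$-transverse immersions; the point here is to achieve it by an embedded isotopy, which is feasible because $\W$-transversality is such a mild condition.)

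I expect the local cancellation to be the main obstacle: constructing an explicit $C^0$-small isotopy of $D$ inside $B\cong\R^4$, rel $\partial D$, that deletes a canceling pair of tangencies and creates none. This is the surface analogue of canceling a pair of index-$\pm1$ zeros of a vector field (or adjacent critical points in Morse theory), with the vanishing of the index count --- itself a consequence of $\nu\Sigma$ being trivial --- as the input that makes it go through. The care required is that the finger must be $C^0$-small yet need not, and generally cannot, be $C^1$-small, so one must control $\W$ along the steep sides of the finger: writing the displaced disk as the graph of a small normal-valued function over $D$ and expanding the condition for $\W$ to lie tangent to the graph, one checks that no new tangency appears once the finger's slopes are large relative to the size of the $C^0$-perturbation. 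The remaining ingredients --- genericity of tangencies, the index count, and the choice of arc and neighborhood --- are routine.
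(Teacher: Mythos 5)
Your high-level structure matches the paper's: identify $\W$-tangencies with zeros of a section of a rank-2 bundle over $\Sigma$ whose Euler number is $e(\nu\Sigma)$ (the paper uses the projection of a nowhere-zero vector field in $\W$ to $\nu\Sigma$ directly, since under the paper's standing orientability conventions $\W|\Sigma$ is trivial; your $\operatorname{Hom}(\W|\Sigma,\nu\Sigma)$ formulation with the Bockstein remark is a correct and slightly more general variant), and then cancel pairs of opposite-index zeros along embedded arcs. The "only if" direction is fine.

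The gap is exactly where you flag it: the local cancellation of a single opposite-sign pair along the arc $\gamma$. Producing a nowhere-zero extension $s'$ of the section over $D$ gives a formal solution, but it does not by itself produce an isotopy; realizing a formal solution by an embedded isotopy is precisely the hard content, and your "finger move" is asserted rather than constructed. The graph-of-a-normal-function picture is also not obviously the right frame: the tangency condition becomes a constraint of the form $\W_\nu(p,f(p))=Df_p\bigl(\W_T(p,f(p))\bigr)$ coupled with $f|_{\partial D}=0$, which is a nonlinear first-order relation on $f$ (the $\W$-direction itself moves with $f$), and it is not clear from your sketch where the opposite-sign hypothesis enters, nor why "steep slopes" suffice without creating new tangencies near the finger's tip where the slope must vanish. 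The paper's proof supplies the missing mechanism concretely: after projecting out $\W$, each generic tangency is a Whitney umbrella in $\R^3$, a canceling pair is modeled by two concave-up umbrellas joined along a double curve, and slicing by hyperplanes $z=c^2$ reduces the question to a pair of arcs in $\R^2$ with two crossings. These are linked when the signs agree (so local cancellation is genuinely impossible) and can be separated by a Type-II Reidemeister move when the signs are opposite, which is the explicit $C^0$-small isotopy. Until your sketch engages with that geometry — or an equivalent local model — the converse direction is not proved.
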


\begin{proof}  Assume $\Sigma$ is generic. Then a nowhere-zero vector field in $\W$ projects to a section of $\nu \Sigma=(TM|\Sigma)/T\Sigma$ whose (isolated) zeroes correspond to the tangencies of $\W$ with $\Sigma$, with signed count given by the Euler number $e(\nu\Sigma)$. If these can be eliminated by an isotopy of $\Sigma$, then $\nu\Sigma$ is trivial. For the converse, we assume $e(\nu\Sigma)=0$ and cancel zeroes in pairs of opposite sign. To do this, we first consider a local model $f(u,v)=(w,x,y,z)$ of the embedding with a $\W$-tangency at 0, where $\W$ is parallel to the $w$-axis. After projecting out $\W$, the resulting map $\R^2\to\R^3$ must be an immersion except at 0, where we have a generic nonimmersed point, a Whitney umbrella. Two of these are shown in Figure~\ref{Whitney}. A local model is given by $f(u,v)=(v,u,uv,v^2)$, which is an embedding in $\R^4$ whose quotient in $\R^3$ (by dropping the $w$-coordinate) is the model Whitney umbrella. The latter can be visualized as the set of horizontal (constant $z$) lines cutting through the $z$-axis and the parabola $(1,v,v^2)$, so there is a double curve ($u=0$) along the positive $z$-axis. If the lift to $\R^4$ is sliced along a hyperplane $z=c^2>0$, we see a pair of skew lines $ (w,x,y)=(\epsilon c, u,\epsilon cu)$ with $\epsilon=\pm1$; these merge together as $c\to 0$. Note that the reflection of $\R^4$ reversing the sign of $w$ interchanges the levels of the two skew lines (which looks like a crossing change when we also project out $w$). This exhibits a mirror image pair of surfaces projecting to the same Whitney umbrella, corresponding to the two signs of zeroes of the section of $\nu\Sigma$. Now any two $\W$-tangencies in $\Sigma$ can be connected by a smoothly embedded curve $C$ in $\Sigma$ avoiding any other tangencies. We can choose $C$ to appear in a local model as any radial ray in its domain, such as either half of the $u$-axis. Note that the upward normal of $\Sigma$ in the Whitney umbrella along the positive $u$-axis corresponds to the downward normal along the negative $u$-axis. Thus, we can choose the rays so that a tubular neighborhood of $C\subset \Sigma$ is modeled in $\R^3$ by a segment of the $x$-axis with two concave-up Whitney umbrellas, smoothly joined along a common constant-$x$ parabola (Figure~\ref{Whitney}). Since there are no other nearby $\W$-tangencies, we can pass to $\R^4$ by including the $w$-coordinate in the obvious way. We can now cancel the two $\W$-tangencies if and only if they have opposite sign. To see this, again slice along the hyperplanes $z=c^2$. Now, projecting out the $w$-axis gives a pair of curves with two crossings. If the signs are the same, the curves are linked in $\R^3$ (Figure~\ref{Whitney}(b)). But if the signs are opposite (Figure~\ref{Whitney}(c)), we can eliminate the crossings by a Type-II Reidemeister move for small $|c|$. This eliminates two $\W$-tangencies by a $C^0$-small isotopy suported in the model. (Note that the isotopy is smooth along the $u$-axis since the latter remains fixed and its normal vectors rotate in the $wy$-plane. An alternative description is to let $x$ be time. The two Whitney umbrellas together look like a parabola in $\R^3$ that pivots across the $wz$-plane and then pivots back, creating two opposite $\W$-tangencies. Our isotopy simply tapers this to the identity for small $|v|$.)
\end{proof}

\begin{figure}
\labellist
\small\hair 2pt
\pinlabel {a)} at 73 163
\pinlabel {b)} at 58 35
\pinlabel {c)} at 223 35
\pinlabel {$x$} at 95 72
\pinlabel {$y$} at 184 62
\pinlabel {$z$} at 193 173
\pinlabel {$z$} at 193 173
\pinlabel {$z=c^2$} at 315 128
\pinlabel {$\Sigma$} at 218 69
\endlabellist
\centering
\includegraphics{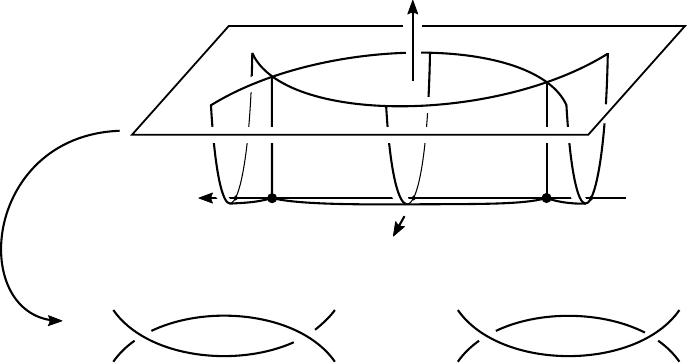}
\caption{(a) A pair of $\W$-tangencies projected into $xyz$-space and (b,c) possible lifts to $wxy$-space of a constant $z$ slice. The pair cancels if and only if the tangencies are oppositely oriented as in (c).}
\label{Whitney}
\end{figure}

As noted in Section~\ref{EngelChar}, if the above $M$ is even-contact, then once $\Sigma$ has no $\W$-tangencies it is canonically embedded in a germ of a contact 3-manifold $N\subset M$. A further $C^\infty$-small perturbation in $N$ then makes $\Sigma$ convex (which is well-defined in $M$). Thus:

\begin{cor}\label{convex}
Let $\Sigma\subset M$ be a surface with trivial normal bundle in an even-contact 4-manifold. Then there is a $C^0$-small isotopy making $\Sigma$ convex in $M$ (i.e.\ convex in a $\W$-transverse 3-manifold $N\subset M$). \qed
\end{cor}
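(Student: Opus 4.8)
The plan is to deduce this directly from Lemma~\ref{noW}, the canonical contact germ structure recorded at the end of Section~\ref{Engel}, and the standard fact (quoted in Section~\ref{Convex}) that every closed surface in a contact $3$-manifold is $C^\infty$-small isotopic to a convex surface. So the proof is essentially a matter of assembling facts already in hand, together with some bookkeeping about smallness of isotopies.

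First I would invoke Lemma~\ref{noW}: since $\nu\Sigma$ is trivial, there is a $C^0$-small isotopy after which $\Sigma$ has no $\W$-tangencies. As noted at the end of Section~\ref{Engel}, such a $\Sigma$ then lies canonically inside the germ of a $\W$-transverse $3$-manifold $N\approx\Sigma\times\R$, which inherits a contact structure $\xi=\E\cap TN$; crucially, this germ $(N,\xi)$ — and hence every notion defined in terms of it, including convexity and the dividing set — is intrinsic to the even-contact structure on $M$, being recovered by projecting out the $\W$-flow.

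Next I would apply the basic convexity result inside $(N,\xi)$: a $C^\infty$-small isotopy of $\Sigma$, carried out within the product neighborhood $N$, makes $\Sigma$ convex there. Two small verifications are needed. A $C^\infty$-small isotopy keeps the surface inside $N$, so it is a legitimate isotopy of $\Sigma$ in $M$, and composing it with the $C^0$-small isotopy from Lemma~\ref{noW} yields a $C^0$-small isotopy of $\Sigma$ in $M$ (one may shrink the $C^\infty$-small stage further if desired). Moreover, any surface contained in the $\W$-transverse $N$ automatically avoids $\W$-tangencies — since $T\Sigma\subset TN$ while $\W\not\subset TN$ — so the resulting convexity is exactly the $M$-intrinsic notion asserted in the statement, namely convexity in a $\W$-transverse $N\subset M$.

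There is no real obstacle here: the substance of the corollary is entirely contained in Lemma~\ref{noW} and the identification of the contact germ $N$, both already established, and the convexity step is a black box from Section~\ref{Convex}. The only points requiring (minor) care are the two bookkeeping checks just mentioned — that the convexifying perturbation can be kept $C^0$-small and confined to $N$, and that convexity computed in $N$ agrees with the notion of convexity in $M$ — which is why the statement can reasonably be recorded as an immediate corollary.
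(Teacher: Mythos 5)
Your proposal is correct and follows essentially the same route as the paper: eliminate $\W$-tangencies via Lemma~\ref{noW}, pass to the canonical contact germ $N$, and apply the standard $C^\infty$-small convexification. The paper records this reasoning in the paragraph immediately preceding the corollary, so there is nothing to add beyond the bookkeeping points you already raise.
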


\subsubsection{Bypasses in even-contact 4-manifolds}\label{BypassSub} While the bypass operation is a powerful tool in contact topology (Section~\ref{BypassCont}), it can be difficult to find the required half-disks, and the resulting isotopies cannot be made $C^0$-small. In contrast, we now show that suitably small half-disks can always be found in even-contact 4-manifolds. We then easily do our required simplification of characteristic foliations, realizing any preassigned simple foliation (Corollary~\ref{standardize}).

\begin{lem}\label{bypass}
Let $\Sigma$ be a convex surface with dividing set $\Gamma$ in an even-contact 4-manifold $M$. Let $C$ be an arc embedded in $\Sigma$ that transversely intersects $\Gamma$ at its endpoints $\partial C$ and exactly one interior point. Then after an isotopy of $C$ in $\Sigma$ rel $\Gamma$, there is a $C^0$-small isotopy of $\Sigma$ in $M$,  after which $\Sigma$ is still convex, but with $\Gamma$ changed by a bypass on $C$ (Figure~\ref{bypassFig}, or if preferred, its mirror image).
\end{lem}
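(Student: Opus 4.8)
The plan is to reduce the statement to a standard local model near $C$, build the required bypass half-disk there by hand — this is where the extra dimension of $M$ does the work — and then quote the bypass operation of Section~\ref{Convex}. Since a bypass changes $\Gamma$ only inside a neighborhood of the bypass disk, which in turn lies in a neighborhood of $C$, it suffices to produce the asserted $C^0$-small isotopy of $\Sigma$ supported near $C$.

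First I would localize. As $\Sigma$ is convex in $M$, it lies in a $\W$-transverse $3$-manifold $N$ carrying the induced contact structure $\xi=\E\cap TN$, and I may work inside $N$. The arc $C$ meets $\Gamma$ transversely in exactly three points, so it is nonisolating (every complementary region abuts $\Gamma$, as $C$ is an embedded arc with $\partial C\subset\Gamma$). Hence the Legendrian Realization Principle applies in $N$; combined with Addendum~\ref{C0} to keep the accompanying perturbation of $\Sigma$ $C^0$-small, it lets me, after an isotopy of $C$ in $\Sigma$ rel $\Gamma$, assume $C$ is Legendrian with a standard model in $\Sigma$. A neighborhood of $C$ in $N$ is then pairwise contactomorphic to a standard neighborhood of a Legendrian arc, with $\Sigma$ appearing as a standard convex surface through it carrying three strands of $\Gamma$ across $C$, and flowing along $\W$ extends this to a standard even-contact model of a neighborhood of $C$ in $M$, namely a piece of the prolongation $\pr\R^3$.

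Next I would construct the bypass. Inside the even-contact model I would exhibit an explicit half-disk $D$ with $D\cap\Sigma=C\subset\partial D$, $D$ transverse to $\Sigma$, and $\partial D\smallsetminus C$ a Legendrian arc transverse to $\Sigma$. This is the step where four dimensions matter: in a contact $3$-manifold such a half-disk need not exist — finding one is the main difficulty in applying such moves — but here the normal bundle of $\Sigma$ is $2$-dimensional, so there is a spare direction transverse to $N$ into which to route the Legendrian return arc and bend $D$. Because the ambient model is standard, it is routine to arrange this return arc, to check that $tb(\partial D)=-1$, and to check that $D$ has a tight neighborhood (its neighborhood is a sub-ball of a standard chart); these conditions then force the dividing set of $D$ to be a single arc with boundary on $\inter C$. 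The two sides into which $D$ may be pushed off $\Sigma$ account for the mirror-image option in Figure~\ref{bypassFig}.

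Finally, with $D$ in hand, I would invoke the bypass operation of Section~\ref{Convex}: via the Flexibility Theorem and Addendum~\ref{C0}, a neighborhood of $D$ has a standard model in which $\Sigma$ is pushed across $D$ by a $C^0$-small isotopy that fixes the two corners of $D$, where the characteristic foliation is singular. After this isotopy $\Sigma$ is again convex in $M$ — that is, convex in a $\W$-transverse $3$-manifold near $N$ — with $\Gamma$ changed precisely by the bypass on $C$ of Figure~\ref{bypassFig}. The main obstacle is the middle step: checking that the hands-on disk $D$ genuinely satisfies every hypothesis of the bypass construction (a truly embedded half-disk with the prescribed Legendrian boundary, $tb(\partial D)=-1$, a tight — i.e.\ standard — neighborhood, and a single-arc dividing set) and that pushing $\Sigma$ across it yields exactly the indicated modification of $\Gamma$ while remaining $C^0$-small and convex. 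Everything else is a direct appeal to results already established above.
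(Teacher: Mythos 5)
Your plan follows the paper's route at the macro level: localize with the Legendrian Realization Principle and Addendum~\ref{C0}, manufacture a bypass half-disk $D$ with the help of the extra dimension, and then invoke the standard bypass push-off together with Addendum~\ref{C0} for $C^0$-smallness. But the ``middle step'' you flag as the main obstacle and call ``routine to arrange'' is exactly the content of the lemma, and your sketch does not yet reach the actual difficulty. If you try to build $L=\partial D\smallsetminus C$ directly ``transverse to $N$'' so that $D\cap\Sigma=C$ from the start, you leave the concrete contact chart in $N$ where front-projection diagrammatics live, and then verifying that $L$ is Legendrian, that $tb(\partial D)=-1$, and that $D$ is a standard bypass half-disk with a single dividing arc becomes a computation in an as-yet-unspecified $\W$-transverse $3$-manifold. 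None of that is routine until you say what that $3$-manifold is and what the contact structure on it looks like near $D$.

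The paper sidesteps this by first constructing $L$ entirely \emph{inside} $N$: in the standard model $(\R^3,\cos(x)\,dy-\sin(x)\,dz)$, $L$ is the Legendrian lift of an explicit heart-shaped front-projection curve, from which $tb(\partial D)=-1$ is read off the diagram. The price is unavoidable and is the crux you are missing: any half-disk $D$ in $N$ with boundary $C\cup L$ necessarily has $\inter D$ crossing $\Sigma$ (since $\Sigma$ separates $N$ locally), so this $D$ is \emph{not} a bypass for $\Sigma$ in $N$. The fourth dimension is used precisely here: flow $D\smallsetminus C$ a small amount along $\W$. Because the $\W$-flow preserves $\E$, $L$ stays Legendrian and one obtains a genuine embedding of $\Sigma\cup_C D$ in $M$ with no $\W$-tangencies; thickening this union transverse to $\W$ produces a new $\W$-transverse $3$-manifold $N'$ (locally contactomorphic to the old $N$) in which $D$ is an honest bypass half-disk, and only then is the push-off performed. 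To complete your argument you would need to either (i) exhibit the heart-shaped $L$ and carry out the $\W$-push and the $N'$-thickening, including the extra paragraph's care to keep the $x$-coordinate (and hence the isotopy) $C^0$-small, or (ii) explain how the $tb$ and tightness checks can be done for an $L$ that never lies in a standardized chart. Option~(i) is what the paper does and is markedly cleaner.
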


\begin{proof}
By hypothesis, $\Sigma$ is convex in a $\W$-transverse $N$. By the Legendrian Realization Principle using Addendum~\ref{C0}, which isotopes $C$ in $\Sigma$ and $\Sigma$ in $N$ as allowed by the lemma (Section~\ref{Lerp}), we can make $C$ Legendrian with a neighborhood in $N$ given by the following standard model: In $\R^3$ with the tight contact structure $\xi$ given by $\cos(x) dy-\sin(x)dz$, the $xy$-plane is convex (relative to the unit vector field parallel to the $z$-axis) with dividing set given by the lines $x=n\pi$ for $n\in\Z$. Identify $C$ with the Legendrian arc $[-\pi,\pi]$ on the $x$-axis and $\Sigma$ with the $xy$-plane in a neighborhood of that arc. (To reverse orientation on $\Sigma$, flip the signs of $y$ and $z$.) In the front projection to the $yz$-plane (which is analogous to that of $dz+xdy$ but with more rotation), consider the heart-shaped curve in Figure~\ref{heart}. This lifts to a smooth Legendrian curve $L$ in $\R^3$ with the same endpoints as $C$ (which projects to the origin). Thus, $C\cup L$ bounds an embedded half-disk $D$, which we can assume lies in our given neighborhood of $C$ in $N$, although its interior intersects $\Sigma$. We can at least arrange $D$ to be transverse to $\Sigma$, with outward normal along $C$ pointing downward. It is routine to check from the front projection that $tb(\partial D)=-1$. (There is one left twist in $\xi$ along $C$, and no contribution from $L$ since the two cusps contribute canceling half-twists to a $\xi$-transverse vector field.) Thus, after a small perturbation, $D$ can be identified with a standard model bypass. This fails to be a bypass for $\Sigma$ in $N$ since $D-C$ intersects $\Sigma$. However, a neighborhood of $C$ in $D$ does intersect $\Sigma$ correctly. By flowing the rest of $D$ a small amount away from $N$ along $\W$ in $M$, we obtain an embedding of $\Sigma\cup_C D$ in $M$ with no $\W$-tangencies on either surface. Thickening this union slightly in a direction transverse to $\W$, we obtain a new $N$ (with a locally contactomorphic projection to the old $N$). In this new $N$, we can perform a bypass move on $D$.

\begin{figure}
\labellist
\small\hair 2pt
\pinlabel {$C$} at 80 69
\pinlabel {$z$} at 81 107
\pinlabel {$y$} at 159 82
\pinlabel {$\Sigma$} at 12 82
\pinlabel {$L$} at 105 25
\endlabellist
\centering
\includegraphics{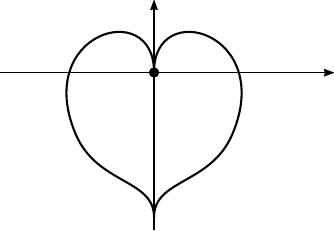}
\caption{A bypass half-disk intersecting $\Sigma$.}
\label{heart}
\end{figure}

To arrange the isotopy to be $C^0$-small, control the $y$-, $z$- and $w$-coordinates by taking the front projection of $D$ and its push along $\W$ sufficiently small. For the $x$-coordinate, which cannot be assumed to be small, note that the tangent angles of the image of $L$ in Figure~\ref{heart} are monotonic, except near the bottom cusp. The reversal of the angle there can be kept small by using a sufficiently narrow diagram. Thus, the $x$-coordinate of $L$ is close to being monotonic, so we can choose our original $D$ to admit a (noncharacteristic) foliation by arcs from $C$ to $L$ whose $x$-coordinates are $C^0$-close to being constant. Our {\em coronary bypass} isotopy can now be chosen $C^0$-small, flowing a neighborhood of $C$ in $\Sigma$ along these arcs.
\end{proof}

\begin{cor}\label{standardize}
Suppose $\Sigma$ is a torus in an even-contact 4-manifold $M$, and that the plane bundles $\nu\Sigma$ and $(\E/\W)|\Sigma$ are trivial. Then after a $C^0$-small isotopy, we may assume $\Sigma$ is convex with a simple foliation (Definition~\ref{stdForm}), preassigned up to isotopy in $\Sigma$ and realizing any preassigned direction in $H_1(\Sigma)$.
\end{cor}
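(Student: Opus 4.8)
The plan is to make $\Sigma$ convex, use the unrestricted bypasses of Lemma~\ref{bypass} to reduce its dividing set $\Gamma$ to a pair of parallel essential curves of the preassigned direction, and then invoke Giroux's Flexibility Theorem to install the simple foliation. First I would apply Corollary~\ref{convex} to make $\Sigma$ convex, by a $C^0$-small isotopy, in some $\W$-transverse 3-manifold $N\subset M$; let $\xi=\E\cap TN$. Triviality of $(\E/\W)|\Sigma$ gives $\langle e(\xi),\Sigma\rangle=0$, so $\chi(R_+)=\chi(R_-)=0$ (Section~\ref{Convex}), a constraint preserved by every move below.

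Next I would put $\Gamma$ into the combinatorial normal form of two parallel essential curves. On a torus, disjoint essential simple closed curves are parallel, so all essential components of $\Gamma$ lie in a single isotopy class, and $\chi(R_\pm)=0$ forces their number to be even. If there are none, I would first \emph{fold} (Section~\ref{Convex}) along an essential curve disjoint from $\Gamma$ (one exists, since $\Gamma$ is then contained in a union of disks), introducing a parallel pair by a $C^0$-small isotopy. Then I would clear the inessential circles one at a time, using a standard bypass to remove an innermost inessential circle --- a move obstructed for tight contact structures but available unconditionally here, since Lemma~\ref{bypass} supplies the required half-disk. Finally I would repeatedly merge three consecutive parallel essential curves into one by a bypass (Figure~\ref{bypassFig}), lowering their even number by two each time, until exactly two remain. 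Each step is a $C^0$-small isotopy keeping $\Sigma$ convex (in a possibly new $N'\subset M$), and there are finitely many of them.

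To fix the direction, note that $\Gamma$ now consists of two parallel essential curves of some class $d$, and I want class $d'$. A bypass on a torus with two essential dividing curves changes $\Gamma$ by a Dehn twist (Figure~\ref{DehnBypass}), and Lemma~\ref{bypass} also allows the mirror bypass, giving the inverse twist. Since the mapping class group of $\Sigma$ acts transitively on primitive classes and is generated by Dehn twists (equivalently, any two slopes are connected in the Farey graph), iterating these bypasses carries $d$ to $d'$ while keeping $\Gamma$ a pair of parallel essential curves. At this point a simple foliation of direction $d'$ is adapted to $\Gamma$, as discussed before Definition~\ref{stdForm}; after a preliminary $C^0$-small isotopy of $\Sigma$ in $N$ aligning the dividing set of the preassigned simple foliation with $\Gamma$, the Flexibility Theorem~\ref{flex}, made $C^0$-small by Addendum~\ref{C0}, isotopes $\Sigma$ within $N$ --- hence within $M$ --- to realize that foliation. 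Composing the finitely many $C^0$-small isotopies finishes the proof.

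The hard part will be the combinatorial normalization of $\Gamma$ in the middle steps: checking carefully that bypasses and folds chain together to clear all inessential circles, reduce to two essential curves, and then realize an arbitrary mapping-class element to obtain the prescribed direction --- all while tracking $\Gamma$ up to isotopy, the parity of essential curves, and $C^0$-smallness. Each of these moves is routine in convex surface theory once the existence obstruction for bypass half-disks is removed by Lemma~\ref{bypass}, so I expect the difficulty to be bookkeeping rather than conceptual.
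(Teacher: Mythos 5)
Your outline shares the paper's tools (Corollary~\ref{convex}, Lemma~\ref{bypass}, folding, the Flexibility Theorem with Addendum~\ref{C0}) and the Dehn-twist endgame, but you normalize $\Gamma$ in the opposite order from the paper, and that is where the argument has a gap. You propose to ``clear the inessential circles one at a time, using a standard bypass to remove an innermost inessential circle'' and only afterwards merge the essential curves down to two. There is no single bypass that simply deletes an innermost inessential circle. A bypass arc meets $\Gamma$ in three points on (at most three) strands, and the move cuts and reconnects those strands; depending on the configuration it merges three circles into one, performs a Dehn twist on two, or creates/destroys a \emph{pair} of circles by a band-sum with a third strand. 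In every useful case the third strand is another circle of $\Gamma$, which the move alters --- so ``removing'' an inessential circle is really absorbing it into a neighbor, and it is not at all clear a priori which arcs to choose, that the process terminates, or that the number and isotopy class of the essential curves survive intact while you do it. You flag the combinatorics as ``bookkeeping,'' but this is precisely the part that needs an argument.

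The paper's proof resolves exactly this by reversing the order. It first folds and merges to reach \emph{exactly two} essential dividing curves (bypass arcs between essential curves can be chosen to avoid the inessential circles, since each such circle bounds a disk in one of the two complementary annuli). Only then does it attack the inessential circles, in two stages: (i) if $R_+$ or $R_-$ has an inessential component with two or more boundary circles, a bypass joining two of those boundaries through a third circle of $\Gamma$ eliminates them in pairs, until every inessential component of $R_\pm$ is a disk; (ii) using $\chi(R_\pm)=0$ --- which follows from triviality of $(\E/\W)|\Sigma$ --- a disk in $R_+$ forces a disk in $R_-$, and because there are now only two essential curves these two disks must be separated by a single essential circle, giving a well-defined bypass arc that removes them together. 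That counting argument is why the essential reduction must come first. To repair your proposal you would either have to adopt the paper's order, or else supply an explicit bypass scheme for the inessential circles that verifiably leaves the essential part of $\Gamma$ under control --- the ``standard bypass removing an innermost circle'' you invoke is not such a scheme.
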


\begin{proof} By Corollary~\ref{convex}, we can assume $\Sigma$ is convex in $M$. Its dividing set $\Gamma$ then splits $\Sigma$ into two regions $R_+$ and $R_-$ (see the beginning of Section~\ref{Convex}), so there must be an even number of homologically essential components of $\Gamma$, all parallel. By folding along a nonisolating circle if necessary, we can arrange $\Gamma$ to have at least two such components. If there are more than two, we can find a bypass arc connecting three of them, and a coronary bypass merges these into a single component. Thus, we can reduce the number of essential components of $\Gamma$ to exactly two. If $R_+$ or $R_-$ has a homologically trivial component with at least two boundary components, we may connect two boundary circles to some other component of $\Gamma$ by an arc; the corresponding bypass eliminates both circles. By induction, we can then assume all inessential components of $R_\pm$ are disks. Since $\Sigma$ is a torus and the ambient contact structure $\xi|\Sigma=(\E/\W)|\Sigma$ is a trivial bundle, $\chi(R_\pm)=0$. Thus, if $R_+$ contains a disk then it also has a component with negative Euler characteristic, locating a disk in $R_-$. Since $\Gamma$ has only two essential components, these two disks are separated by a single essential circle in $\Gamma$. The two disks may then be eliminated by a bypass, inductively reducing to the case where $\Gamma$ consists only of two parallel essential circles. We can modify these by a Dehn twist parallel or perpendicular to $\Gamma$. (The former is the identity on $\Gamma$; the latter is Figure~\ref{DehnBypass}.) Since these Dehn twists generate $\SL(2;\Z)$, we can turn $\Gamma$ to the given direction. By the Flexibility Theorem with Addendum~\ref{C0}, we can isotope as allowed to obtain the required simple foliation.
\end{proof}

\begin{Remark}
The standard contact $\R^3$ contains an immersed overtwisted disk with only a single clasp double arc \cite[Proof of Proposition~5.1]{Ann}. We can then obtain an embedded overtwisted disk in a given even-contact 4-manifold by pushing along $\W$ as in the proof of Lemma~\ref{bypass}. It follows immediately that every surface $\Sigma$ without $\W$-tangencies lies in a $\W$-transverse 3-manifold $N$ for which $N-\Sigma$ is overtwisted. We can then control the characteristic foliation using Eliashberg's h-Principle \cite{E} and Gray's Theorem \cite{Gray}. This was the author's initial approach to Corollary~\ref{standardize}, but the bypass method is more explicit and can be made $C^0$-small. It follows from either approach that every neighborhood in an even-contact 4-manifold contains a $\W$-transverse overtwisted $\R^3$. (With bypasses, apply the Giroux Criterion.) In the Engel case, this embedding extends to a neighborhood of some section of the prolongation of the overtwisted $\R^3$ (cf.~Observation~\ref{tight}). The utility of this is unclear, since applications frequently require a $w$-interval exceeding some specific length, whereas the constructed neighborhood will typically be narrow in the $w$-direction.
\end{Remark}

\subsection{Transverse realization: proof of Theorem~\ref{main0}}\label{MainProof}

As we easily saw in Section~\ref{TransSurf}, every transverse surface $\Sigma$ in an Engel manifold $M$ is a torus with trivial normal bundle. Thus, Theorem~\ref{main0} is essentially the converse: We must make such a torus $\Sigma$ transverse by a $C^0$-small isotopy. Since $\E/\W$ is trivial on any Engel manifold, Corollary~\ref{standardize} makes $\Sigma$ convex with a simple foliation. The following theorem  then isotopes $\Sigma$ to be $\E$-transverse Legendrian.  Theorem~\ref{main0} follows immediately (for both orientations of $\Sigma$) by transverse pushoff (Proposition~\ref{push}).

\begin{thm}\label{main1}
In an Engel manifold $M$, suppose $\Sigma$ is a convex torus with simple characteristic foliation $\F$. Then $\Sigma$ is $C^0$-small isotopic in $M$ to an $\E$-transverse Legendrian $\Sigma^*$ with characteristic foliation given by the image of $\F$.
\end{thm}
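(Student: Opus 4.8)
The plan is to make the characteristic leaves of $\Sigma$ tangent to $\D$, which forces $\Sigma$ to be $\E$-transverse Legendrian: a characteristic leaf is automatically tangent to $\E$, so since $\D=\ker\alpha\cap\ker\beta=\E\cap\ker\beta$ in a local Engel chart, tangency of the leaf to $\D$ is the same as tangency to $\ker\beta$; once every characteristic leaf is $\D$-tangent, $\D\cap T\Sigma$ is nonzero everywhere (Legendrian) and, the foliation being nonsingular, $\E\cap T\Sigma$ is exactly $1$-dimensional ($\E$-transverse). The naive way to achieve $\ker\beta$-tangency would be to lift $\Sigma$ in the fiber direction to the ``graph'' along which the Engel line field $\D\cap TN$ equals the characteristic direction; but this lift can require a $C^0$-large shift in the $w$-coordinate (as large as $\pi/2$, wherever the simple foliation $\F$ is far from horizontal), so instead I would keep $w$ pinned near its two Legendrian values $0$ and $\pi$ and insert zig-zags, exactly as in Lemma~\ref{zigzag}: a characteristic leaf, suitably coordinatized, is $\beta$-transverse, and Lemma~\ref{zigzag} replaces it by a $C^0$-close $\beta$-Legendrian (hence $\D$-tangent) leaf built from embedded zig-zag teeth.

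Concretely, I would work with $\Sigma$ convex in a $\W$-transverse $3$-manifold $N\subset M$ carrying the simple foliation $\F$, as already arranged, using the local charts of Section~\ref{Engel}. Near each closed leaf $L$: $L$ is an embedded $\E$-tangent (hence Legendrian) knot in $N$, and making its new copy $L^*$ be $\D$-tangent amounts to arranging the front projection of $L^*$ to enclose zero signed area (the $\beta$-Legendrian loop condition, analogous to the Lagrangian projection discussion after Proposition~\ref{annuli}); the closed-up and area-controlled form of Lemma~\ref{zigzag} does this, and its parametrized form extends it over a thin annular neighborhood of $L$ in $\Sigma$, making the nearby spiraling leaves $\D$-tangent too by a $C^0$-small isotopy. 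On each of the two complementary annuli (each swept out by characteristic leaves running from one closed leaf to the other) the foliation is a product in suitable coordinates, so cutting into rectangles and again applying the parametrized Lemma~\ref{zigzag} makes every leaf there $\D$-tangent, with the end behavior dictated so as to match the sheets already built over the closed-leaf neighborhoods. Reassembling the pieces yields an embedded torus $\Sigma^*$, $C^0$-small isotopic to $\Sigma$, whose characteristic foliation is a zig-zagged copy of $\F$, tangent to $\D$, nonsingular, and never in the $\W$-direction, hence $\E$-transverse Legendrian. A final $\W$-invariant application of the Giroux Flexibility Theorem~\ref{flex} with Addendum~\ref{C0} straightens this foliation to exactly the image of $\F$.

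The main obstacle is the global bookkeeping of the zig-zags. Each insertion is a purely local $C^0$-small move, but the teeth over the closed-leaf annuli and over the complementary annuli must be organized so that the resulting $\D$-Legendrian sheets glue into a single \emph{smooth, embedded} torus, with its characteristic foliation coming out nonsingular and isotopic to $\F$; near a closed leaf the spiraling makes this matching delicate, and the area-balancing and parametrized clauses of Lemma~\ref{zigzag} are precisely what is needed to carry it out while keeping all the small teeth disjoint and the entire isotopy $C^0$-small.
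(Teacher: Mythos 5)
Your final mechanism (building the Legendrian torus out of zig-zagged sheets via Lemma~\ref{zigzag}) is the same one the paper uses, but your proposal skips the main obstacle and organizes the decomposition around the wrong curves. The difficulty is that the line field $\L=\D/\W$ on $\Sigma$ has a priori \emph{no} relation to the characteristic foliation $\F$: Corollary~\ref{standardize} gives no control over $\L$. To apply Lemma~\ref{zigzag} to a rectangle you must already have the leaf arcs $\beta$-transverse in the interior and $\beta$-Legendrian at the two ends, i.e.\ the angle between $\L$ and $T\Sigma$ (measured in $\xi$) must cross from one Legendrian value to the other exactly once across each piece, with no extra winding. Nothing forces that angle to be small, or to be organized at all, near the closed leaves of $\F$; those leaves have nothing to do with the tangency locus of $\L$. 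In fact there is a genuine cohomological obstruction, $\psi_\L^*[S^1]\in H^1(\Sigma)$, to even homotoping $\L|\Sigma$ into $T\Sigma$, and if it is nonzero no amount of local zig-zagging can succeed.

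The paper's proof spends most of its effort on precisely this: first killing $\psi_\L^*[S^1]$ by small Type~I Reidemeister moves in Lagrangian projections along curves transverse to $\F$, and then simplifying the $\L$-tangency set $\Lambda=\psi_\L^{-1}(\{0,\pi\})$ by ``transverse twists'' until it is a disjoint union of circles transverse to $\F$, parallel to the vertical direction of the simple parametrization. Only after that is $\Sigma$ decomposed into annuli bounded by circles of $\Lambda$ (not annuli around the closed leaves of $\F$), on which Lemma~\ref{zigzag} applies fiberwise. You would need to supply all of this. A secondary issue: your closing appeal to Giroux Flexibility to ``straighten'' the zig-zagged foliation is not available, because $\Sigma^*$ has circles of $\W$-tangencies along the creases and so does not sit inside a $\W$-transverse $3$-manifold; the paper instead arranges, via the area-balancing clause of Lemma~\ref{zigzag}, that the new $\E$-foliation already agrees with $\F$ up to a $C^0$-small isotopy within $\Sigma^*$, and then corrects the $\D$-tangency by a further $C^0$-small push in the $w$-direction.
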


\noindent Since the foliation of Corollary~\ref{standardize} is simple with any preassigned direction (Definition~\ref{stdForm}), and simple foliations are stable under small perturbations of the line field, the resulting foliation on the transverse pushoff $\tau\Sigma^*$ will be simple in any preassigned direction. For the proof of this theorem, we no longer explicitly need $\Sigma$ to be convex in its $\W$-transverse $N$, although it still follows from simplicity of $\F$ until the final stage, where circles of $\W$-tangencies are created so that $\Sigma^*$ no longer smoothly embeds in a $\W$-transverse $N$. These tangencies disappear under transverse pushoff, so $\tau\Sigma^*$ is again convex in $M$ since its foliation is simple. The proof of Theorem~\ref{main1} applies without change to any parametrized torus $\Sigma\approx\R^2/\Z^2$ without $\W$-tangencies and with the $x$-component of the characteristic line field positive everywhere. However, Reeb components would cause difficulties.

\begin{proof}
First, we need a way to measure the failure of $\Sigma$ to be Legendrian or transverse. We know that $\Sigma$ is $\E$-transverse since $\F$ is nonsingular. Equivalently, $\Sigma$ is $\xi$-transverse in the associated contact manifold $N$. However, $\Sigma$ has no clear relation to $\D$, since Corollary~\ref{standardize} provides no control over the line field $\L=\D/\W$ in $\xi$. We measure this failure by trivializing $\xi|\Sigma$ using the oriented line field $\xi\cap T\Sigma$. Then $\L$ determines a map $\psi_\L\co \Sigma\to S^1$. This is identically 0 or $\pi$ precisely when $\Sigma$ is Legendrian, and lies on an open arc of $S^1$ strictly between these if $\Sigma$ is transverse. The class $\psi_\L^*[S^1]$ in $H^1(\Sigma)$ is a delicate invariant, since it is only defined when the characteristic foliation is nonsingular. For example, it is unclear whether it has well-defined behavior under bypasses, since these require a global operation (the Flexibility Theorem) to restore a nonsingular foliation. However, we can control the invariant more directly. In fact, if $C\subset\Sigma$ is any circle transverse to $\F$, we can change $\psi_\L^*[S^1]$ by adding any multiple of $\PD[C]$, using a $C^0$-small isotopy of $\Sigma$ supported near $C$: We locally model $C\subset\Sigma\subset N$ as the $z$-axis in the $xz$-plane in $(\R^3,dz+xdy)$ mod unit $z$-translation. Then the Lagrangian projection of $\Sigma$ (Section~\ref{Proj}) is the $x$-axis in the $xy$-plane. We isotope this in $wxy$-space by a small Type I Reidemeister move over the $xy$-plane, toward either side of the $x$-axis. The corresponding $z$-invariant isotopy of $\Sigma$ in $M$ adds a vertical double curve to its projection in $xyz$-space. We draw the move in the $xy$-plane so that the resulting curve and the $x$-axis together enclose zero signed area. Then we can assume $\F$ is preserved since the new (Legendrian) leaves have no net change in $z$ across the modified region. Since the winding number in the $xy$-plane has been changed by $\pm 1$ but $\L$ has not changed, we have changed $\psi_\L^*[S^1]$, up to arbitrary sign, by $\PD[C]$. This can be done repeatedly, changing $\psi_\L^*[S^1]$ as desired. Note that $\Sigma$ is still embedded in a (different) $\W$-transverse $N$. By Definition~\ref{stdForm}, the simple foliation $\F$ comes with a parametrization $\Sigma\approx\R^2/\Z^2$ in which the slopes of the characteristic lines are bounded (by compactness). Thus, we can find a pair of transverse circles $C$ comprising a basis of $H_1(\Sigma)$, allowing us to change $\psi_\L^*[S^1]$ arbitrarily. We choose it to vanish. (Vanishing on a vertical circle is sufficient.)

We next wish to further simplify $\psi_\L$ by homotopy. Since $N$ is now likely to be tightly layered in $M$ (e.g.\ from the proof of Corollary~\ref{standardize}), we only have a small amount of maneuvering room in the $\W$-direction. Thus, we are only allowed small perturbations of $\L$ in $\xi$, so continue to rely on large changes of $T\Sigma$. We can at least perturb $\L$ so that $\psi_\L$ is generic. Then the set $\Lambda=\psi_\L^{-1}(\{0,\pi\})$ of $\L$-tangencies to $\Sigma$ is a 1-manifold in $\Sigma$ that has finitely many quadratic tangencies to $\F$. Any path in $\Sigma$ between components of $\Lambda$ has a well-defined $\L$-winding number in $\frac12\Z$ determined by $\psi_\L$. Suppose $C$ is an embedded arc in $\Sigma$ transverse to $\F$, intersecting $\Lambda$ exactly at its endpoints $\partial C$, where $\Lambda$ is tangent to $\F$ and convex on the sides away from $C$. If $C$ has $\L$-winding number zero then a $C^0$-small perturbation of $\Sigma$ in $N$, fixing $C$ but rotating its normal vectors slightly past $\L$, surgers $\Lambda$ along a band following $C$. This {\em transverse twist} eliminates the pair of tangencies without introducing others. Since this moves $\Sigma$ in $N$, we expect its characteristic foliation to change. However, we can model the transverse curve $C$ as for Proposition~\ref{helices} with $\Sigma$ a vertical plane.  Then twisting symmetrically preserves the foliation after a $C^1$-small isotopy of $\F$ in $\Sigma$. Thus, we recover the original hypotheses with fewer tangencies. We can also do a transverse twist if $\partial C$ contains a generic point of $\Lambda$ by first perturbing $\psi_\L$ to create a canceling pair of tangencies of $\Lambda$ with $\F$. The total number of tangencies will not increase unless we have done this at both endpoints of $C$. 

We can now simplify $\Lambda$ by transverse twists.  First we construct a component $\Lambda_0$ of $\Lambda$ that is a vertical circle (constant $x$ in $\R^2/\Z^2$). If $\Lambda$ is initially empty, we can just twist on a vertical circle (or note that $\Sigma$ is already transverse as desired, for one orientation). If $\Lambda$ has a nonvertical component, deform $\Lambda$ as in Figure~\ref{fingers} and consider an arc $C$ that runs the long way around a vertical circle as in the figure. Since $\Lambda$ is generic, it cuts $C$ into finitely many segments, each with $\L$-winding number $\pm\frac12$ or 0, and segments of opposite sign cannot be adjacent. Since $\psi_\L$ is nullhomotopic on vertical circles, $C$ has $\L$-winding number zero. Thus, some segment also has $\L$-winding number zero, so we can remove it by a transverse twist. By induction, we can now assume $\inter C$ is disjoint from $\Lambda$, and the final transverse twist splits off the required vertical circle $\Lambda_0$. Let $L$ be a leaf segment of $\F$ such that $L\cap\Lambda_0=\partial L$ and $L$ intersects $\Lambda$ transversely. Repeat the previous procedure at each intersection point of $\inter L$ with $\Lambda$ until all such intersections lie on vertical circles of $\Lambda$. After this, $\Lambda$ consists of a nonempty collection of vertical circles, together with some circles that are disjoint from $L\cup\Lambda_0$ and hence inessential in $\Sigma$. Each open annulus of $\Sigma$ bounded by two consecutive vertical circles can be reparametrized so that $\F$ is horizontal. In such a parametrization, we next arrange each circle of $\Lambda$ to have only one local maximum and minimum: Choose an innermost circle for which this fails. There must be a local extremum pointed inward, and we can find a transverse curve $C$ from this across the bounded region that is disjoint from any inner components of $\Lambda$ (each of which has a unique pair of local extrema by hypothesis). Then $C$ has $\L$-winding number 0 (since $C\cap\Lambda=\partial C$ lies in a single component of $\Lambda$), and its transverse twist splits the component in two without increasing the number of local extrema. This procedure increases the number of components until there is one for each pair of local extrema as required. Now for an outermost circle we can connect its two extrema by a transverse arc crossing $L$, converting it to a pair of essential circles without local extrema. By induction, we reduce to the case where $\Lambda$ consists entirely of parallel circles transverse to $\F$.

\begin{figure}
\labellist
\small\hair 2pt
\pinlabel {$\F$} at 168 68
\pinlabel {$\F$} at 8 68
\pinlabel {$\Lambda$} at 12 8
\pinlabel {$\Lambda$} at 172 8
\pinlabel {$C$} at 216 4
\pinlabel {$C$} at 216 55
\endlabellist
\centering
\includegraphics{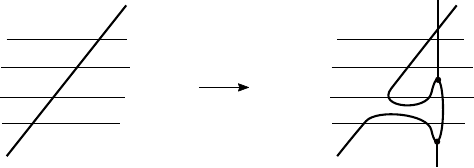}
\caption{Creating a vertical component of $\Lambda$.}
\label{fingers}
\end{figure}

To obtain the desired Legendrian torus in $M$, we perturb $\psi_\L$ to thicken $\Lambda$ to a collection of annuli with boundaries transverse to $\F$. Each annulus $A$ in between these has a neighborhood with a local model: Proposition~\ref{annuli} identifies a neighborhood of $A$ in $N$ with a neighborhood of the annulus given by $x\in[0,1]$, $y=0$ in $\R^2\times S^1=\R^3$ mod unit $z$-translation, with standard contact form $\alpha=dz+xdy$ so that $\F$ is horizontal. Then a neighborhood of $A$ in $M$ is identified with one in the prolongation, $S^1\times \R^2\times S^1$ with $\beta=\sin(w)dx+\cos(w)dy$. After a further perturbation of $\psi_\L$, we can assume $w|A$ is independent of $z$ near $\partial A$. For each fixed $z$, think of the corresponding leaf of $\F$ in $A$ as an arc in $wxy$-space, so $z$ parametrizes an $S^1$-family of arcs in the contact manifold $(S^1\times\R^2,\beta)$. Each is $\beta$-transverse (up to orientation) for $x\in(0,1)$ and $\beta$-Legendrian elsewhere. By Lemma~\ref{zigzag}, we can simultaneously isotope these arcs, supported with $x\in[0,1]$, to a family of $\beta$-Legendrian helices whose front projections are zig-zag arcs about the $x$-axis. The union of these $\beta$-Legendrian arcs over all $z$ is an embedded annulus $A^*$ in the 4-dimensional model that is $C^0$-small isotopic to $A$. Its projection into $\R^3$ mod $z$-translation has the appearence of a pleated curtain, intersecting the $xz$-plane in vertical curves, and failing to be smooth along creases composed of the cusps of the constant-$z$ zig-zag arcs. (The annulus $A^*$ is smooth in $M$ there, with the creases representing circles of $\W$-tangencies.) Since the isotopy creating the $\beta$-Legendrian arcs was $C^0$-small in the polar angle $-w$ of $\L$, the tangent planes to the smooth regions of the curtain can be assumed to be arbitrarily close to the vertical plane field $\ker\beta$ (which varies with $w$ on $A$ but is never tangent to $A$ for $x\in(0,1)$). The constant-$z$ $\beta$-Legendrian arcs on $A^*$ are no longer $\alpha$-Legendrian since their $y$-coordinates vary, so we replace them by the $\E$-characteristic foliation of $A^*$, which projects to the $\xi$-characteristic foliation of the curtain. Since the tangent planes of the curtain are close to the vertical planes $\ker\beta$, this change will be $C^1$-small on the $\alpha$-Lagrangian ($=\beta$-front) $xy$-projection of each arc (but only $C^0$-small on $z$ due to the slopes of $\xi$). A small perturbation of the curtain restores the condition that adjacent teeth have equal areas as in the proof of Lemma~\ref{zigzag}. Then each arc traverses $A^*$ with no net change in $z=-\int xdy$, so the $\E$-foliation agrees with $\F$ after a $C^0$-small isotopy of the latter in $A^*$. Since the new leaves have $\beta$-front projections $C^1$-close to the previous $\beta$-Legendrian (constant-$z$) arcs, they can be made $\beta$-Legendrian by a $C^0$-small isotopy of $A^*$ in the $w$-direction. This preserves the $\alpha$-Legendrian condition, so $\F$ is now tangent to $\D=\ker\alpha\cap\ker\beta$ everywhere on $A^*$. Applying this procedure to every $A$ in $\Sigma$ gives the required Legendrian torus $\Sigma^*$. This has no $\E$-tangencies since its projection to $N$ has no $\xi$-tangencies.
\end{proof}

\subsubsection{An operation on Legendrian tori} The above proof also yields the following operation that will be useful in Section~\ref{DT}:

\begin{sch}\label{Reeb}
Suppose $\Sigma$ is a Legendrian torus in an Engel manifold $M$, and its characteristic foliation has a nondegenerate closed leaf $L$ along which $\Sigma$ is transverse to $\E$ and has no $\W$-tangencies. Then there is a $C^0$-small isotopy of $\Sigma$ in $M$, supported near $L$, splitting $L$ into three parallel nondegenerate closed leaves separated by Reeb foliations as in Figure~\ref{ReebFig}. The modified region is still $\E$-transverse Legendrian, with no $\W$-tangencies on the closed leaves, but the Reeb regions have parallel circles of $\W$-tangencies.
\end{sch}

\begin{figure}
\labellist
\small\hair 2pt
\pinlabel {$L$} at 21 68
\endlabellist
\centering
\includegraphics{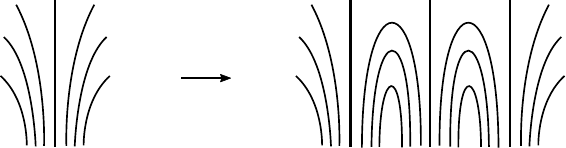}
\caption{Modifying the foliation of a Legendrian torus.}
\label{ReebFig}
\end{figure}

\begin{proof}
Since $\Sigma$ has no $\W$-tangencies on (hence near) $L$, the latter has a neighborhood $U\subset\Sigma$ lying in a contact 3-manifold $N$. After perturbing the foliation (e.g.\ by isotoping $\Sigma$ in $N$) we can assume the return map on $U$ is multiplication by a constant. We can then model $U$ in $(\R^3,dz+xdy)$ mod unit $y$-translation, with $L$ given by the $y$-axis and $U$ given by a generic plane containing it. The isotopy for the scholium is $y$-invariant, supported near the $y$-axis, fixing it but rotating a smaller neighborhood $V$ through a half-turn. Figure~\ref{fold} shows the projection into the $xz$-plane. The first step adds a new pair of parallel closed leaves (projecting into the $z$-axis in the figure) as we pass through the $yz$-plane. (This is {\em folding} as discussed in Section~\ref{Lerp}.) The remaining step reverses the direction of $L$ as in Example~\ref{liftReeb} when we pass through the $xy$-plane. (The foliation on $V$ is then the same as it was originally, but with reversed orientation since $V$ has half-turned.) Since $U$ was originally Legendrian, the line field $\L$ was tangent to the original foliation. If $U$ was sufficiently narrow, then $\L$ is nearly parallel to the $y$-axis. A small perturbation in the $\W$-direction then makes $\L$ tangent to our new foliation (up to orientation) everywhere except on a pair of annuli with closures lying in the open Reeb regions. The foliations on these annuli are transverse to $\L$, as Figure~\ref{ReebFig} shows. (In this figure, $\L$ lies in $\xi$ but is nearly vertical. The foliation rotates as we cross each annulus, exhibiting $\L$-winding numbers $\pm\frac12$ across each, with the same sign. The resulting full twist across the diagram corresponds to the obstruction exhibited in Example~\ref{liftReeb}.) Since the annuli can be reparametrized so that their leaves appear horizontal, we can identify them with the local model in the last paragraph of the previous proof, to make them $\E$-transverse Legendrian with the same foliation at the expense of introducing vertical circles of $\W$-tangencies.
\end{proof}

\begin{figure}
\labellist
\small\hair 2pt
\pinlabel {$x$} at 57 42
\pinlabel {$x$} at 179 42
\pinlabel {$x$} at 301 42
\pinlabel {$ z$} at 33 70
\pinlabel {$ z$} at 155 70
\pinlabel {$ z$} at 277 70
\endlabellist
\centering
\includegraphics{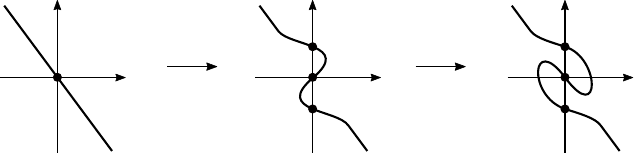}
\caption{Side view of the isotopy.}
\label{fold}
\end{figure}

\begin{Remark}\label{immersion}
If $\Sigma$ is the Legendrian lift of an embedded torus to a prolongation $\pr N$ (Example~\ref{liftReeb}), the foliation produced by the scholium is realized by the Legendrian lift of an isotopic torus in $N$. However, this lift will be in a different homotopy class from $\Sigma$ (as in Example~\ref{liftReeb}), whereas the scholium preserves the homotopy class. This is because the circles of $\W$-tangencies restore the original class. When we take transverse pushoffs, the Legendrian lifts become transverse lifts (Example~\ref{transLift}), determined by taking $\L$ to be the normal lines of the embeddings in $N$. However, some of the $\W$-tangencies of the scholium create circles of double points of the torus in $N$. (Compare with the transverse pushoff of a Legendrian knot, seen in a front projection.) The resulting extra winding of the normal lines $\L$ to the immersion restores the original homotopy class of $\Sigma$.
\end{Remark}


\section{Distinguishing isotopic transverse tori}\label{Distinguishing}

Having shown that a torus with trivial normal bundle in an Engel manifold is always isotopic to a transverse torus, we now investigate how many transverse tori can result. We consider two such tori to be equivalent if they are {\em transversely isotopic}, i.e., isotopic through transverse tori. (Such an isotopy need not extend to an ambient isotopy preserving the Engel structure; in particular, the leaf structure of the characteristic foliation may change.) More generally, we can ask when such tori are {\em transversely homotopic}, i.e., they are images of embeddings that are homotopic through transverse maps. Such a homotopy is automatically regular since transversality and a dimension count imply that a transverse map of a surface must be an immersion. We will study the {\em formal invariants} of transverse tori, invariants that, like the self-linking number of transverse knots in contact 3-manifolds, are determined by the underlying bundle theory. For this, we often suppress the geometry by working in a {\em formal Engel structure} on a 4-manifold $M$, an oriented flag of the form $\W\subset \D\subset\E\subset TM$, that we sometimes allow to vary with a parameter. Our formal invariants will be well-defined in this context, and preserved by isotopy (or sometimes homotopy) through maps transverse to the varying plane fields $\D$. After defining our main examples of such invariants in Section~\ref{Invariants}, we will discuss formal invariants more sytematically in Section~\ref{Classification}, culminating in the classification of formal transverse isotopy invariants. Section~\ref{Mod2} digresses on mod 2 residues. The remainder of the paper (Section~\ref{Examples}) then deals with the range and applications of these invariants in the Engel setting.

\subsection{The primary formal invariants}\label{Invariants} 

We begin with a pair of relative invariants. Given a formal Engel manifold $M$, let $f_0\co\Sigma\to M$ be an immersion transverse to $\D$. Then $f_0^*\D$ is the normal bundle of $f_0$ and $f_0^*(TM/\D)$ is identified with the tangent bundle $T\Sigma$. Both of these bundles are trivialized by the flag. A regular homotopy $F$ transports these normal and tangent trivializations to the final immersion $f_1$. If the latter is transverse, it also has trivializations induced by the flag, which are given by maps $\Sigma\to S^1$ relative to the transported trivializations. We denote the corresponding classes in $H^1(\Sigma)$ by $D_\nu(F)$ and $D_T(F)$. These are obviously invariants in the following sense:

\begin{prop}\label{transHom}
When $F$ is a transverse homotopy, $D_\nu(F)=D_T(F)=0$. \qed
\end{prop}

These invariants are only defined relative to a fixed regular homotopy $F$, so to obtain absolute invariants of transverse tori, we next introduce topologically determined reference framings.

\subsubsection{The invariant $\Delta_T$}
The tangent bundle $T\Sigma$ has a canonical homotopy class of framings: Simply identify $\Sigma$ with $\R^2/\Z^2$ and use the standard basis for $\R^2$. Changing the identification changes this description by a self-diffeomorphism of $\R^2/\Z^2$ which, after isotopy, is given by some $A\in\SL(2,\Z)$. The new framing is obtained from the old one by applying the same element $A$ to each tangent plane. This is homotopic to the original framing by fixing a path from $A$ to the identity in $\GL(2,\R)$ and applying it simultaneously to each tangent plane. Now for any transverse immersion $f\co\Sigma\to M$, define $\Delta_T(f)\in H^1(\Sigma)$ to be the class of the framing determined by $\E/\D$ in $f^*(TM/\D)\cong T\Sigma$, relative to the canonical framing. This determines the corresponding relative invariant as $$D_T(F)=\Delta_T(f_1)-\Delta_T(f_0)$$ for any regular homotopy $F$ between transverse immersions $f_0$ and $f_1$. Beware that there can be homologically nontrivial self-isotopies with $f_0(\Sigma)=f_1(\Sigma)$ but $D_T(F)\ne0$ (e.g.~Example~\ref{T3}). However, the divisibility of $\Delta_T$ is a transverse homotopy invariant of the image torus. We use the notation $\Delta_T=\Delta_T(\Sigma)$ when $f$ is an inclusion.

\subsubsection{The canonical normal framing}\label{Framing}
As is true for knots in 3-manifolds, we do not expect a canonical normal framing of a surface unless it is embedded, trivially in homology. In that case, it is the boundary of a {\em Seifert solid}, an embedded (compact, oriented) 3-manifold, whose outward normal to the surface is sometimes canonical:

\begin{prop}\label{Seifert}
Suppose $\Sigma$ is a surface embedded in a 4-manifold $M$, with $[\Sigma]=0$ in $H_2(M)$. Then its normal bundle $\nu\Sigma$ is canonically framed over any embedded circle $C$ in $\Sigma$ that has trivial intersection pairing with $H_3(M)$. If this is true for a basis of $H_1(\Sigma)$ then $\nu\Sigma$ is canonically framed.
\end{prop}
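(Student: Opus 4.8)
The plan is to extend to surfaces the classical notion of the $0$-framing of a nullhomologous knot, with a \emph{Seifert $3$-chain} playing the role of a Seifert surface. Since $[\Sigma]=0$ in $H_2(M)$, the self-intersection number $e(\nu\Sigma)=[\Sigma]\cdot[\Sigma]$ vanishes, so the oriented rank-$2$ bundle $\nu\Sigma$ is trivial; and since oriented bordism agrees with homology in this range, the inclusion $\Sigma\hookrightarrow M$ bounds a map $g\co W\to M$ of a compact oriented $3$-manifold with $\partial W=\Sigma$, which after a generic perturbation is an immersion near $\partial W$ transverse to $\Sigma$, so its inward normal along $\Sigma$ is a nowhere-zero section $\vec n_W$ of $\nu\Sigma$ and hence a global framing $s_W$. (One can even take $W$ embedded, as the preimage of a regular value of a map $M\setminus\Sigma\to S^1$ representing a class that restricts to the fiber class of $\partial\nu\Sigma$; this exists precisely because $[\Sigma]$ is nullhomologous. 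But only a $3$-chain is needed.) For any $1$-cycle $\gamma$ in $M\setminus\Sigma$ put $\operatorname{lk}_W(\gamma)=\gamma\cdot W\in\Z$, the intersection number, which is defined because $\partial W\subset\Sigma$, and which depends only on $[\gamma]\in H_1(M\setminus\Sigma)$ (a $2$-chain in $M\setminus\Sigma$ bounding $\gamma_1-\gamma_0$ has zero intersection with $\partial W=\Sigma$).

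Now fix an immersed circle $C$ in $\Sigma$ with trivial pairing against $H_3(M)$. A framing $s$ of $\nu\Sigma|_C$ determines, by pushing $C$ off along a nonzero normal section, a pushoff $C'_s\subset M\setminus\Sigma$ well-defined up to isotopy in $M\setminus\Sigma$, with $[C'_s]=[C]$ in $H_1(M)$ because $C'_s$ and $C$ are homotopic in $M$. For two Seifert $3$-chains $W,W'$ the difference $Z=W-W'$ is a $3$-cycle, so $\operatorname{lk}_W(C'_s)-\operatorname{lk}_{W'}(C'_s)=[C'_s]\cdot[Z]=[C]\cdot[Z]=0$ by hypothesis; hence $\operatorname{lk}(C'_s):=\operatorname{lk}_W(C'_s)$ is a well-defined integer, independent of $W$. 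Changing the framing by $1$ replaces $C'_s$ by a parallel pushoff cobounding with it a once-punctured annulus in $\nu\Sigma\setminus\Sigma$, so $[C'_{s+1}]-[C'_s]=[\mu]$ in $H_1(M\setminus\Sigma)$ for a meridian $\mu$ of $\Sigma$, while $\mu\cdot W=\pm1$ (the meridian links $\partial W=\Sigma$ once). Thus $s\mapsto\operatorname{lk}(C'_s)$ is an affine bijection from framings to $\Z$, and the unique framing with $\operatorname{lk}(C'_s)=0$ is the desired canonical framing, proving the first assertion. Moreover, pushing $C$ off $\Sigma$ along the \emph{outward} normal of a fixed $W$ gives a pushoff disjoint from $W$ carrying the framing $s_W|_C$ (opposite sections of an oriented rank-$2$ bundle give homotopic framings), so $\operatorname{lk}(C'_{s_W|_C})=0$; that is, the canonical framing of $C$ is exactly the restriction $s_W|_C$ of the global framing.

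For the last sentence, suppose a basis $C_1,\dots,C_{2g}$ of $H_1(\Sigma)$ pairs trivially with $H_3(M)$. By bilinearity of the intersection pairing, every immersed circle in $\Sigma$ then does, so by the previous paragraph each $s_W$ restricts to the canonical framing on every $C_i$. Given two Seifert $3$-chains $W,W'$, the global framings $s_W,s_{W'}$ of the trivial bundle $\nu\Sigma$ differ by a class $\theta\in H^1(\Sigma)$ with $\langle\theta,[C_i]\rangle=0$ for all $i$; since the $[C_i]$ generate $H_1(\Sigma)$, we get $\theta=0$, so $s_W$ is independent of $W$ up to homotopy, giving the canonical framing of $\nu\Sigma$.

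The genuinely substantive step is the identification of the framing change of $C'_s$ with $\mu\cdot W$ and of the difference $\operatorname{lk}_W-\operatorname{lk}_{W'}$ with $[C]\cdot[Z]$: this is the surface analogue of the self-linking computation for transverse knots and is exactly what forces the hypothesis on $H_3(M)$. The remaining points — smoothing a $3$-chain to a manifold immersed near $\Sigma$, choosing representatives transverse to $W$, and the orientation bookkeeping that makes $\mu\cdot W=\pm1$ and hence $s\mapsto\operatorname{lk}(C'_s)$ bijective — are routine general position, and should not obstruct the argument.
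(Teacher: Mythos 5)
Your proposal is correct and follows essentially the same route as the paper: produce a Seifert $3$-manifold $N$ for $\Sigma$, characterize the canonical framing along $C$ by vanishing of the intersection number of the pushoff with $N$, and use the hypothesis that $[C]$ annihilates $H_3(M)$ to see that the difference $[N-N']\in H_3(M)$ does not affect this number; the last sentence then follows because framings of the trivial plane bundle $\nu\Sigma$ are classified by $H^1(\Sigma)$, which is detected by evaluation on a basis of $H_1(\Sigma)$. The only cosmetic difference is that you first invoke bordism to obtain a possibly non-embedded $3$-chain $W$ and only parenthetically note the embedded construction, whereas the paper goes directly to an embedded Seifert solid via the composite $[M-\inter V,S^1]\cong H_3(M,\Sigma)\to H_2(\Sigma)$; the substance — the linking-number characterization and the role of the $H_3$ hypothesis — is identical.
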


\noindent For example, the hypothesis on $C$ is true whenever $C$ vanishes in $H_1(M;\Q)$. When $[\Sigma]=0$, $\nu\Sigma$ is framed whenever the intersection pairing between $H_1(M)$ and $H_3(M)$ vanishes, such as if $M$ is $I\times N$ or the prolongation of any open 3-manifold with trivial pairing $H_1\otimes H_2$.

\begin{proof}
To verify that $\Sigma$ has a Seifert solid $N$, let $V$ be a closed tubular neighborhood of $\Sigma$ and consider the homomorphisms
$$[M-\inter V,S^1]\cong H^1(M-\inter V)\cong H_3(M-\inter V,\partial V)\cong H_3(M,\Sigma)\to H_2(\Sigma)\cong\Z$$
where we use compactly supported maps and cohomology if $M$ is noncompact. The arrow denotes the boundary map of the pair $(M,\Sigma)$, and is an epimorphism since $[\Sigma]=0$ in $H_2(M)$. The corresponding map onto $\Z$ is given by the intersection number with a meridian $\mu$ of $\Sigma$, so the entire composite epimorphism is given by restricting maps to $\mu$ and taking the degree. Thus, any map $M-\inter V\to  S^1$ that is sent to a generator restricts, after homotopy, to a diffeomorphism on each fiber of the circle bundle $\partial V\to\Sigma$. We can then take $N$ to be the preimage of any regular value, extended over $V$ in the obvious way.

The normal framing of $\Sigma=\partial N$ directed outward from $N$ is characterized along $C$ as the unique framing for which a pushoff $C'$ of $C$ in $M$ has $C'\cdot N=0$. To see that this is independent of choice of $N$ along $C$, let $N'$ be another Seifert solid, and let $\eta=[N-N']\in H_3(M)$. By hypothesis $C'\cdot \eta=0$, so $C'\cdot N'=0$ as required. The last sentence of the proposition follows since framings on $\nu \Sigma$ are classified by $H^1(\Sigma)$.
\end{proof}

\begin{Remark}
This fails whenever the hypothesis on $C$ does: If $C$ pairs nontrivially with some $\zeta\in H_3(M)$, we can add $\zeta$ to the class in $H_3(M,\Sigma)$  implicitly used to define $N$. The resulting $N'$ has $C'\cdot N'=C\cdot\zeta\ne 0$, so $N$ and $N'$ determine different framings along $C$. The statement without the hypothesis (which has appeared elsewhere in the literature) has a simple counterexample: Begin with a great circle bounding a disk $D$ in the round 3-sphere. Taking the product with $S^1$ yields a torus $\Sigma$ bounding a solid torus in $S^3\times S^1$. We can similarly obtain an infinite family of Seifert solids for $\Sigma$ by rotating $D$ in $S^3$ with fixed axis $\partial D$ as we traverse the $S^1$ factor. These Seifert solids determine all framings of $\nu\Sigma$ over any circle of the form $\{ p\}\times S^1\subset\Sigma$. In fact, these framings are all related by diffeomorphisms of $(S^3\times S^1,\Sigma)$, so none is canonically determined by the pair. The framing over $\partial D$ is canonical.
\end{Remark}

\subsubsection{The invariant $\Delta_\nu$}\label{DeltaNu}
Now for a transversely embedded torus $\Sigma$ in a formal Engel manifold $M$, with $[\Sigma]=0$ in $H_2(M)$, we choose a Seifert solid $N$ and define $\Delta_\nu=\Delta_\nu(\Sigma)$ in $H^1(\Sigma)$ to be the class of the framing induced by $\W|\Sigma$ in $\D|\Sigma= \nu\Sigma$, relative to the framing induced by $N$. Then $\Delta_\nu$ is independent of choice of $N$ whenever the latter framing is. To use this in full generality, let $A\subset H_1(\Sigma)$ be the summand consisting of all classes whose images in $H_1(M)$ have vanishing intersection pairing with $H_3(M)$. This annihilator is nontrivial, since it contains the nontrivial kernel of inclusion $H_1(\Sigma)=H_1(\partial N)\to H_1(N)$. We can now interpret $\Delta_\nu$ as a well-defined element of the dual space $A^*$, which is $H^1(\Sigma)$ when $\rk A=2$. When $\rk A\ne2$, we either interpret $\Delta_\nu$ as a well-defined element of $A^*\cong\Z$ or as a class in $H^1(\Sigma)$ depending on $N$. For a transverse embedding $f\co\Sigma\to M$, we similarly define $\Delta_\nu(f)$ using the cohomology of the domain. This invariant is analogous to the self-linking number of a transverse knot in a contact 3-manifold (Section~\ref{Knots}), and generalizes the self-linking class used by Kegel in \cite{K}. Unlike our previous invariants, $\Delta_\nu(f)$ is not generally preserved by transverse homotopy (e.g.\ Lemma~\ref{dnu}, also see below), but $N$ can be dragged along with a transverse isotopy. Thus:

\begin{cor} \label{wellDef}
When $[\Sigma]=0$, $\Delta_\nu(\Sigma)\in A^*$ is well-defined and preserved by transverse isotopies. Its divisibility in $\Z^{\ge0}$ is a transverse isotopy invariant that is independent of parametrization of $\Sigma$. \qed
\end{cor}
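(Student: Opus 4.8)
\emph{Overview.} This corollary mostly consolidates the discussion preceding it, so the plan is to check three things in turn: that $\Delta_\nu(\Sigma)$ is unambiguous once viewed in $A^*$, that it is carried along by transverse isotopies, and that passing to divisibility absorbs the residual dependence on the parametrization. For well-definedness the only choice entering $\Delta_\nu(\Sigma)$ is the Seifert solid $N$, and I would argue that this choice becomes irrelevant after restricting to $A$. Given two Seifert solids $N,N'$, the class $\eta=[N]-[N']$ is an honest element of $H_3(M)$ (the boundaries cancel), and for an immersed circle $C\subset\Sigma$ with pushoff $C'$ the $N$- and $N'$-induced reference framings of $\nu\Sigma$ differ along $C$ by $C'\cdot N-C'\cdot N'=[C]\cdot\eta$, exactly as in the remarks following Proposition~\ref{Seifert}. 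This depends only on $[C]\in H_1(\Sigma)$ and vanishes whenever $[C]\in A$; since framings of the oriented plane bundle $\nu\Sigma$ are classified by $H^1(\Sigma)$, the image of $\Delta_\nu(\Sigma)$ under the restriction $H^1(\Sigma)=\mathrm{Hom}(H_1(\Sigma),\Z)\to\mathrm{Hom}(A,\Z)=A^*$ is independent of $N$, which is the first assertion. (As $\eta$ ranges over $H_3(M)$, $A$ is the largest subgroup with this property, and it has rank $1$ or $2$, so divisibility in $A^*$ is meaningful.)

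\emph{Transverse isotopy invariance.} Let $F$ be a formal transverse isotopy from $f_0$ to $f_1$. Since $f_0\simeq f_1$ as maps into $M$ they induce equal maps $H_1(\Sigma)\to H_1(M)$, so the summand $A$ is literally the same for both, and $F$ acts as the identity on $H^1(\Sigma)$ up to homotopy. I would then drag a Seifert solid for $f_0(\Sigma)$ along $F$ to a Seifert solid for $f_1(\Sigma)$; with that choice the $N$-reference framing of $\nu f_1$ corresponds under $F$ to the one for $\nu f_0$, and because $F$ is $\D$-transverse throughout, the flag (i.e.\ $\W$-induced) trivializations of the normal bundles are transported along $F$ as well. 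Hence $\Delta_\nu(f_1)=\Delta_\nu(f_0)$ already as classes in $H^1(\Sigma)$, a fortiori in $A^*$. Equivalently, with the dragged Seifert solid one has $\Delta_\nu(f_1)=\Delta_\nu(f_0)+D_\nu(F)$, where $D_\nu(F)=0$ for a transverse homotopy by Section~\ref{Invariants}; and since $\Delta_\nu\in A^*$ does not depend on the Seifert solid, this is enough.

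\emph{Divisibility and reparametrization.} The divisibility of a class in a finitely generated free abelian group is preserved by every automorphism of that group, so by the previous step the divisibility of $\Delta_\nu(\Sigma)$ is already a transverse isotopy invariant for a fixed parametrization. For a second transverse embedding $f'$ with the same image, write $f'=f\circ\varphi$ with $\varphi$ a self-diffeomorphism of $\Sigma$. From $f'_*=f_*\circ\varphi_*$ one reads off that $\varphi_*$ carries the summand $A'$ of $f'$ isomorphically onto the summand $A$ of $f$; choosing the same Seifert solid for the common image gives $\Delta_\nu(f')=\varphi^*\Delta_\nu(f)$, i.e.\ $\Delta_\nu(f')$ is obtained from $\Delta_\nu(f)$ by precomposition with the isomorphism $\varphi_*\colon A'\to A$, which preserves divisibility. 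Combining the two parts, the divisibility in $\Z^{\ge0}$ is invariant under transverse isotopy even when reparametrizations are allowed.

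\emph{Main obstacle.} There is no substantial difficulty here; the only point that needs care is keeping straight the genuine $N$-dependence of $\Delta_\nu$ as a class in $H^1(\Sigma)$ versus its $N$-independence after passing to $A^*$ — in particular, carrying out the dragged-Seifert-solid argument and the reparametrization argument consistently with that distinction, and verifying that both $A$ and the induced identification $\varphi^*\colon A^*\to(A')^*$ behave naturally throughout.
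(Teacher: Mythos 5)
Your proof is correct and follows essentially the same route the paper intends: well-definedness in $A^*$ via the difference $[N]-[N']\in H_3(M)$ (exactly as in the remark after Proposition~\ref{Seifert}), transverse-isotopy invariance by dragging the Seifert solid along $F$, and invariance of divisibility under automorphisms of $H^1(\Sigma)$ to absorb reparametrization. The only imprecision is the phrase ``because $F$ is $\D$-transverse throughout'': for a \emph{formal} transverse isotopy the intermediate $f_t$ need not be $\D_t$-transverse; what the definition gives is a homotopy rel $t=0,1$ of $F^*\D_t$ to the normal plane subbundle $\nu$ of $F^*TM$, which still lets one transport the $\W$-induced framing of $\nu$ continuously across $I\times\Sigma$ and compare it with the dragged Seifert-solid framing — so the conclusion $\Delta_\nu(f_1)=\Delta_\nu(f_0)$ holds, but the justification should be phrased in terms of that homotopy rather than pointwise $\D$-transversality.
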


For any isotopy $F$ between transverse embeddings (but not for transverse homotopies, e.g.~Lemma~\ref{dnu}, Example~\ref{PV}) we have $$D_\nu(F)=\Delta_\nu(f_1)-\Delta_\nu(f_0).$$ (When $A\ne H_1(\Sigma)$, interpret the right side relative to a fixed $N$ and its isotopic image.) As with $D_T$, there can be self-isotopies for which this is nonzero (notably for an unknotted torus, Example~\ref{PV}(d)).

\subsubsection{A relation}\label{ds}
Our invariants must satisfy a mod 2 congruence. This arises since a formal Engel flag on $M$ trivializes $TM$, so determines a spin structure. The relative invariants for a regular homotopy $F$ between transverse immersions were obtained by transporting the flag framing from $f_0^*TM$ to $f_1^*TM$ using the induced splitting into plane bundles $\nu\Sigma\oplus T\Sigma$, then using it to measure the flag framing for the latter. Since $M$ is globally spin, the two induced spin structures on $f_1^*TM$ must be equal. Over each circle in $\Sigma$, $D_T(F)$ and $D_\nu(F)$ each count rotations of the flag framing of $f_1^*TM$ relative to the framing from $f_0$. Since the spin structures agree, the total number of rotations must be even. Thus, $$D_T(F)+D_\nu(F)\equiv 0 {\rm\ mod\ 2}.$$

For the absolute invariants, a Seifert solid $N$ for a nullhomologously embedded torus $\Sigma$ in any spin 4-manifold determines a class $\ds\in H^1(\Sigma;\Z_2)$ that compares the ambient spin structure to that determined by the canonical tangent and induced normal framings. This is isotopy invariant, and independent of $N$ to the same extent $\Delta_\nu$ is, although $A$ can be replaced by the larger $A_{\rm spin}\subset H_1(\Sigma)$ consisting of classes whose pairing with $H_3(M)$ is even. The class $\ds$ is often nonzero (Proposition~\ref{mod2}). In the formal Engel setting, $$\Delta_T+\Delta_\nu\equiv\ds {\rm\ mod\ 2},$$ relative to a given $N$ if $A\ne H_1(\Sigma)$.

\subsubsection{Computation in Engel manifolds}\label{Comp}
Recall (Section~\ref{EngelChar}) that since a transverse torus $\Sigma$ in an Engel manifold has no $\W$-tangencies, it projects into a canonical germ of a contact 3-manifold $N$, with the (nonsingular) $\E$-characteristic foliation projecting to the $\xi$-characteristic foliation. In particular, the notions of convexity and dividing sets are well-defined for transverse tori in Engel manifolds, and convexity can be arranged by a $C^\infty$-small perturbation transverse to $\W$. This viewpoint is helpful for explicitly computing the primary formal invariants:

\begin{prop}\label{comp}
\begin{itemize}
For $\Sigma$ as above,
\item[a)] The normal framing on $\Sigma$ determined by the Engel structure is induced by a vector field transverse to its image in $N$ and lying in $\xi$.

\item[b)] The invariant $\Delta_T\in H^1(\Sigma)$ vanishes on any embedded circle $C$ in $\Sigma$ that is transverse to the foliation or parallel to a closed leaf or dividing curve. If $\Sigma$ can be described as $\R^2/\Z^2$ so that the characteristic line field is never vertical, then $\Delta_T=0$. In particular, $\Delta_T=0$ for any transverse torus resulting from Theorem~\ref{main0}.

\item[c)] If $\Sigma$ is convex, then $\Delta_T$ is a multiple of the Poincar\' e dual of a dividing curve $\gamma$. Its signed multiplicity is half the number of Reeb components, counted with sign by their direction of convexity.
\end{itemize}
\end{prop}

For (c), note that nonsingularity of the foliation implies that all dividing curves are essential and hence parallel. By convexity, the foliation has finitely many closed leaves, all parallel to the dividing curves, so every Reeb component follows a dividing curve (cf.~text before Definition~\ref{stdForm}). A Reeb component contributes positively to the sum when its normal vectors on the convex sides of the leaves point in the direction of $\gamma$, or equivalently, when $\gamma$ is oriented oppositely to the direction of the flow for large and increasing $|t|$.

\begin{proof}
The framing in (a) is determined by a vector field in $\D$ transverse to $\W$, so that statement follows immediately. For the rest, we need to compare the Engel framing on $T\Sigma$ with the canonical reference framing. The former is given by $\E/\D$ on $TM/\D\cong T\Sigma$, which is the characteristic line field. The latter has degree 0 on any embedded essential circle in $\Sigma$, implying (b) (since the foliation produced by Theorem~\ref{main0} is simple by Theorem~\ref{main1} and surrounding text) and the first sentence of (c). For the multiplicity, observe how the characteristic line field rotates as we traverse a circle crossing each dividing curve once. To fix the sign (which we do not actually need), recall that $\langle\PD(\gamma),\zeta\rangle=\gamma\cdot\zeta$ for $\zeta\in H_1(\Sigma)$ (and beware that the intersection pairing is anticommutative).
\end{proof}

\subsection{The classification of formal transverse isotopy invariants}\label{Classification}

Given an immersion $f\co\Sigma\to M$ of a torus into a formal Engel manifold, both $\D$ and $T\Sigma$ determine subbundles of the pullback $f^*TM$ over $\Sigma$. In the language of the h-Principle, a {\em formal transverse immersion} (or {\em embedding}) is an immersion (embedding) together with a homotopy through bundle monomorphisms into $f^*TM$ that sends $f^*T\Sigma$ to a subbundle transverse to $f^*\D$. We take the equivalent perspective of homotopically sending $f^*\D$ to a subbundle transverse  to $f^*T\Sigma$, after which it can be identified as the pulled back normal bundle of the immersion. In the case of an embedding, such a homotopy extends to a homotopy of the formal Engel structure on $M$. We interpret any transverse immersion as formal via the constant homotopy. The invariants of the previous section immediately extend to the formal transverse setting. A {\em formal transverse homotopy (isotopy)} is a 1-parameter family of formal transverse immersions (embeddings). Clearly, $D_T(F)$, $D_\nu(F)$ and $\Delta_T$ are formal transverse homotopy invariants, and $\Delta_\nu$ is a formal transverse isotopy invariant. We wish to determine when an isotopy between formal transverse embeddings can be extended to a formal transverse isotopy. In our equivalent perspective, we allow the formal Engel structure to vary: Let $\D\subset I\times TM$ be the family of plane fields associated to a homotopy of formal Engel structures, and let $F\co I\times\Sigma\to I\times M$ denote an isotopy  between embeddings $f_0$ and $f_1$ transverse to the corresponding plane fields. Pulling back normal bundles gives a subbundle $\nu$ of $F^*TM$ agreeing with $F^*\D$ for $t=0,1$. We ask when $F^*\D$ is homotopic to $\nu$ rel $t=0,1$, or equivalently, when the formal Engel structures can be homotoped rel $t=0,1$ to make the embeddings given by $F$ transverse to the corresponding plane fields. The {\em formal transverse isotopy invariants} are the obstructions to making $F$ formally transverse in this manner. These consist precisely of $D_T(F)$, $D_\nu(F)$ and a pair of secondary obstructions that the author still finds somewhat mysterious:

\begin{thm}\label{formal}
The primary obstructions to making $F$ formally transverse are $D_T(F)$ and $D_\nu(F)$, which are congruent mod 2. When these vanish, $F$ can be made formally transverse if and only if a pair of $\Z$-valued secondary obstructions vanishes. For any transverse torus $\Sigma$ embedded in a formal Engel manifold, every possible combination of these obstructions is realized with $\Sigma$ fixed, by some homotopy of the formal Engel structure supported near $\Sigma$ after which $\Sigma$ is again transverse. In particular, such homotopies realize all classes in $H^1(\Sigma)$ as $\Delta_T(\Sigma)$. Given a Seifert solid, they realize all pairs of  classes with sum reducing mod 2 to $\ds$ as $(\Delta_T,\Delta_\nu)$.
\end{thm}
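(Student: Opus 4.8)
The plan is to run the obstruction theory for the homotopy‐theoretic condition that defines formal transversality, and then realize all the resulting values by explicit local models supported near $\Sigma$.

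\textbf{The obstruction theory.} By the definition set up just before the theorem, an isotopy $F$ is formally transverse precisely when the $2$–plane field $F^*\D_t$ can be homotoped rel $\{0,1\}\times\Sigma$, inside $F^*TM\to I\times\Sigma$, to the normal plane field $\nu$. Since a formal Engel flag trivializes $TM$, the bundle $F^*TM$ is a trivial $\R^4$–bundle, so this becomes the problem of homotoping one section of the trivial bundle with fiber the oriented Grassmannian $\mathrm{Gr}_2^+(\R^4)\cong S^2\times S^2$ over $I\times\Sigma$ to another, rel $\partial I\times\Sigma$. I would then run ordinary obstruction theory over the pair $(I\times\Sigma,\,\partial I\times\Sigma)$. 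Because $\pi_1(S^2\times S^2)=0$ there is no obstruction in degree $1$; the first obstruction lies in $H^2(I\times\Sigma,\partial I\times\Sigma;\pi_2(S^2\times S^2))\cong H^1(\Sigma)\otimes(\Z\oplus\Z)\cong H^1(\Sigma)\oplus H^1(\Sigma)$, and tracking which $\pi_2$–summand is carried by the tautological subbundle of $\mathrm{Gr}_2^+(\R^4)$ and which by its quotient identifies this obstruction with the pair $(D_\nu(F),D_T(F))$ already defined in Section~\ref{Invariants}. The congruence $D_T(F)\equiv D_\nu(F)\pmod 2$ is exactly the spin relation proved there (equivalently, a consequence of $M$ being parallelizable). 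When the primary obstruction vanishes, the single remaining obstruction lies in $H^3(I\times\Sigma,\partial I\times\Sigma;\pi_3(S^2\times S^2))\cong H^2(\Sigma)\otimes(\Z\oplus\Z)\cong\Z\oplus\Z$; this is the pair of $\Z$–valued secondary obstructions, governed by the Hopf invariants of the two $S^2$–factors. Since $I\times\Sigma$ is $3$–dimensional there are no further obstructions, and one checks that, with the primary obstruction zero, the secondary obstruction has no indeterminacy. This gives the first two sentences.

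\textbf{Realization.} For the last three sentences, fix the transverse torus $\Sigma$ and vary only the formal Engel structure, by a homotopy $\D_t$ supported in a tubular neighborhood $U\cong\Sigma\times\R^2$, arranged so that $\Sigma$ is transverse to $\D_0$ and to $\D_1$ (but not necessarily in between). The resulting isotopy $F$ is then the constant one, so $D_\nu(F)=\Delta_\nu(f_1)-\Delta_\nu(f_0)$ and $D_T(F)=\Delta_T(f_1)-\Delta_T(f_0)$; moreover the mod–$2$ congruence for $F$ holds automatically here, since along a path of formal Engel flags the induced trivialization of $F^*TM$, hence its spin structure, varies continuously and so is constant, forcing $D_\nu(F)\equiv D_T(F)$. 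Thus realizing all admissible obstruction values for $F$ is the same as realizing, for the new transverse torus, every pair $(\Delta_T,\Delta_\nu)$ with $\Delta_T+\Delta_\nu\equiv\ds\pmod 2$ (and in particular every class in $H^1(\Sigma)$ as $\Delta_T$). To produce the homotopies I would build explicit local models on $U$, starting from a product model in which $\Sigma$ is the zero section transverse to $\D$. For the primary invariants, given $a\in H^1(\Sigma)$ choose a circle $C\subset\Sigma$ with $\PD(a)=[C]$, and over a collar of $C$, as $t$ runs over $[0,1]$, let the flag wind once around $\mathrm{Gr}_2^+(\R^4)$ in a way that returns to $\D_0$ but changes the homotopy class of the line $\W\subset\D$ (changing $\Delta_\nu$ by $a$), or of $\E/\D\subset TM/\D$ (changing $\Delta_T$ by $a$); such a winding must pass through planes non‑transverse to $\Sigma$, but only at interior times, which is permitted. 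Iterating and combining these moves realizes every $(D_T,D_\nu)$ compatible with the congruence. For the secondary invariants one does the analogous construction inside a ball $B^3\subset\inter(I\times\Sigma)$, spinning the flag by a prescribed element of $\pi_3(\mathrm{Gr}_2^+(\R^4))=\Z\oplus\Z$; since $H^3(I\times\Sigma,\partial I\times\Sigma)$ is detected at a point, this changes the secondary obstruction by the chosen amount. In every case one extends the construction off $I\times\Sigma$ over the normal $\R^2$–directions (taking it constant there) and fills in $\W_t\subset\D_t\subset\E_t$ and the orientations by generic choices, checking that the flag remains a genuine oriented flag, i.e.\ a formal Engel structure, throughout.

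\textbf{Main obstacle.} I expect the hard part to be the realization of the secondary obstructions: exhibiting an explicit, ball‑supported homotopy of the flag — exploiting that transversality is dropped in the interior — that changes the pair of Hopf‑type invariants by any prescribed element of $\Z\oplus\Z$, and verifying that the two components really are independent and correctly normalized against the obstruction sequence. This is precisely the ``mysterious'' piece the author flags, with no worked example elsewhere in the paper. A subsidiary point is the orientation bookkeeping needed to see that the two $\pi_2$–summands match $D_\nu$ and $D_T$ individually, rather than some unimodular combination of them, which requires carrying the conventions of Section~\ref{Orientation} through the identification $\mathrm{Gr}_2^+(\R^4)\cong S^2\times S^2$.
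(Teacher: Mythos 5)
Your outline parallels the paper's: interpret the problem as homotoping sections of a $\mathrm{Gr}_2^+(\R^4)\cong S^2\times S^2$–bundle over $I\times\Sigma$ rel $\{0,1\}\times\Sigma$, run cellular obstruction theory, locate primary and secondary obstructions, then realize all values by rechoosing the formal structure. However, there is a genuine error in your identification of the primary obstruction, and it is exactly where you claim to be matching $(D_\nu, D_T)$ against the two $\pi_2$-summands.

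You assert that ``tracking which $\pi_2$–summand is carried by the tautological subbundle of $\mathrm{Gr}_2^+(\R^4)$ and which by its quotient identifies this obstruction with the pair $(D_\nu(F),D_T(F))$.'' This is not correct: the two sphere factors of $\mathrm{Gr}_2^+(\R^4)\cong S^2\times S^2$ come from the self-dual/anti-self-dual decomposition of $\Lambda^2\R^4$, and the Euler classes of both the tautological plane bundle and its orthogonal complement are ``diagonal'' classes involving \emph{both} factors (of the form $a_+\pm a_-$). In fact no basis of $\pi_2(S^2\times S^2)\cong\Z^2$ can separate the tangent and normal rotations, because the boundary map in the fibration $\SO(2)\times\SO(2)\to\SO(4)\to\mathrm{Gr}_2^+(\R^4)$ injects $\pi_2(S^2\times S^2)$ onto the index-2 subgroup of $\pi_1(\SO(2)\times\SO(2))\cong\Z\oplus\Z$ consisting of pairs with even sum, and that subgroup contains no basis supported on separate coordinate factors. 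The paper reads the primary obstruction from this boundary map: the pair $(D_T(F),D_\nu(F))$ lands naturally in $\pi_1(\SO(2)\times\SO(2))$, the obstruction lives in its even-sum subgroup, and the mod 2 congruence is then automatic rather than a separate spin input. Your version, which places the obstruction in $H^1(\Sigma)\otimes\pi_2(S^2\times S^2)\cong H^1(\Sigma)^2$ and matches summands to $D_\nu$ and $D_T$ individually, would have no mod 2 constraint at all, and the coordinate change needed to recover one is precisely the part you omit. Your appeal to the spin relation happens to give the correct congruence, but it cannot repair the misidentification of which $\pi_2$-classes you are looking at.

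On the realization side you also overcomplicate matters, and the obstacle you flag largely dissolves under the paper's approach: rather than constructing explicit winding models and ball-supported spins of the flag, the paper simply builds an abstract section $\D'$ of the Grassmann bundle over $I\times\Sigma$ cell by cell (set $\D'=\nu$ at $t=0,1$ and over the vertical 1-cell, extend over the vertical 2-cells by the unique $\pi_2$-classes hitting the prescribed even-sum pair, extend over the 3-cell to realize any $\pi_3(S^2\times S^2)\cong\Z^2$ value), and then homotopes the formal Engel structure on $M$ rel $t=0$ so that $F^*\D$ agrees with $\D'$; this exists by the homotopy extension property. You should also substantiate your claim that the secondary obstruction has no indeterminacy when the primary one vanishes; the paper does this via the Pontryagin classification of maps from a $3$-complex to $S^2$, where the torsor is $\Z/2d$ with $d$ the divisibility of the primary class, so $d=0$ yields $\Z$.
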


\begin{proof}
We interpret $F^*\D$ and $\nu$ as sections of the Grassmann bundle over $I\times\Sigma$ of oriented planes in the fibers of $F^*TM$. By hypothesis, these agree where $t=0,1$. The fiber is $\SO(4)/\SO(2)\times\SO(2)\approx S^2\times S^2$, where the two $\SO(2)$ factors correspond to rotations of the tangent and normal planes at the given point. Fix a cell decomposition of $\Sigma$ with only two 1-cells, and extend over $I\times\Sigma$ as the obvious product cell structure. Since $\pi_1(S^2\times S^2)=0$, we may assume $F^*\D=\nu$ over the vertical 1-cell (that extends the 0-cell of $\Sigma$). Since $\pi_2(\SO(4))=0$ and $\pi_1(\SO(4))=\Z_2$, the long exact homotopy sequence of the fibration injects $\pi_2(S^2\times S^2)$ onto the index-2 subgroup of $\pi_1(\SO(2)\times\SO(2))\cong\Z\oplus\Z$ consisting of pairs with even sum. By the definition of this boundary operator, the homotopy class of each vertical 2-cell of $I\times\Sigma$ maps to the element of the latter subgroup given by $(D_T(F),D_\nu(F))$ evaluated on the corresponding 1-cell of $\Sigma$. Thus, we can assume $F^*\D=\nu$ over both vertical 2-cells if and only if this pair vanishes. If so, only the 3-cell remains, corresponding to an element of $\pi_3(S^2\times S^2)\cong\Z\oplus\Z$.

To realize a given combination of obstructions over the specified $\Sigma$ (with $F$ given by inclusion) we first realize them abstractly by defining a section $\D'$ of the Grassmann bundle of $F^*TM=I\times (TM|\Sigma)$: Start by setting $\D'=\nu$ at $t=0,1$ and over the vertical 1-cell. The given $D_T$ and $D_\nu$ are required to have even sum, so for each vertical 2-cell there is a unique class in $\pi_2(S^2\times S^2)$ that we can use for extending $\D'$ relative to $\nu$. We can then extend over the 3-cell to realize the remaining obstruction in $\pi_3(S^2\times S^2)$. (Such extensions exist even if the primary obstructions do not vanish, since the attaching map of the 3-cell is nullhomologous rel $t=0,1$. However, in that case, the resulting invariant may only lie in a quotient torsor due to homotopies of the section over the 2-skeleton, cf.\ Remark~\ref{Theta}.) Since the new section $\D'$ agrees with $F^*\D$ when $t=0$, we can homotope the formal Engel structure on $M$ rel $t=0$ so that $F^*\D$ agrees with $\D'$ for all $t$. This realizes the given obstructions, and changes $\Delta_T$ and $\Delta_\nu$ arbitrarily as required.
\end{proof}

\begin{Remark}\label{Theta}
The secondary invariants are still defined (and realized as in the theorem) when the primary invariants are nonzero, although they may lie in finite cyclic torsors. To see this more clearly, trivialize $F^*TM$ and its Grassmann bundle so that $\nu$ is constant. Then $F^*\D$ can be interpreted as a pair of maps $I\times \Sigma\to S^2$ that are constant at $t=0,1$. But maps from a 3-complex to $S^2$ were classified by Pontryagin \cite{Po} (cf.~\cite[proof of Proposition~4.1]{Ann} for the Thom--Pontryagin approach). For each factor $S^2$, the primary obstruction is a class in $H^2(I\times\Sigma,\partial)\cong H^1(\Sigma)$ with some divisibility $d$, and the corresponding secondary obstruction ranges over $\Z/2d$ (as a torsor). The pair of primary obstructions carries the same information as the pair $(D_T,D_\nu)$, although the precise correspondence is not immediately clear.
\end{Remark}

\subsection{A digression on mod 2 residues}\label{Mod2}

When $\Delta_T+\Delta_\nu$ is well-defined in $H^1(\Sigma)$, its mod 2 residue $\ds$ (Section~\ref{ds}) is frequently nonzero. We provide details for completeness, although this is not needed elsewhere in the paper.

\begin{prop}\label{mod2}
Suppose an embedded torus $\Sigma$ in a spin 4-manifold $M$ vanishes in both $H_2(M;\Z)$ and $H_1(M;\Z_2)$. Then $\ds\in H^1(\Sigma;\Z_2)$ is nonzero.
\end{prop}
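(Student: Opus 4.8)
The plan is to recognize $\ds$ as the difference of two honest spin structures on $\Sigma$ and to separate them by the Arf invariant. Since $[\Sigma]=0$ in $H_2(M;\Z)$, Proposition~\ref{Seifert} provides a Seifert solid $N$, and since $H_1(\Sigma;\Z_2)\to H_1(M;\Z_2)$ is trivial, every class of $H_1(\Sigma)$ has even pairing with $H_3(M)$, so $A_{\rm spin}=H_1(\Sigma)$ and $\ds$ is the well-defined, $N$-independent class $\ds=\mathfrak{s}_M|_\Sigma-\mathfrak{s}_0\in H^1(\Sigma;\Z_2)$, where $\mathfrak{s}_M$ is the ambient spin structure and $\mathfrak{s}_0$ is the spin structure on $\Sigma$ determined by the canonical tangent framing together with the normal framing $\phi_\nu$ induced by $N$. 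I would then invoke the standard fact that a spin structure $\mathfrak{s}$ on the torus $\Sigma$ carries a quadratic refinement $q_\mathfrak{s}$ of the mod $2$ intersection form on $H_1(\Sigma;\Z_2)$, with $q_{\mathfrak{s}+\alpha}=q_\mathfrak{s}+\alpha$ and with $\Arf(q_\mathfrak{s})$ the class of $(\Sigma,\mathfrak{s})$ under the isomorphism $\Omega_2^{\rm spin}\cong\Z_2$. Then $\ds=q_{\mathfrak{s}_M|_\Sigma}-q_{\mathfrak{s}_0}$, so it suffices to show these two forms have different Arf invariants.

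One side is immediate: $(\Sigma,\mathfrak{s}_M|_\Sigma)$ bounds the spin $3$-manifold $N$ (with the spin structure restricted from $M$), so $\Arf(q_{\mathfrak{s}_M|_\Sigma})=0$.

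The substance of the argument is to show $\Arf(q_{\mathfrak{s}_0})=1$, i.e.\ that $q_{\mathfrak{s}_0}$ takes the value $1$ on every nonzero class. Given a primitive class, I would represent it by an embedded circle $C\subset\Sigma$ and split $TM|_C$ as the ordered sum of four oriented, canonically trivialized line bundles: $TC$, the normal line of $C$ in $\Sigma$, the outward normal line of $N$, and the normal line of $N$ in $M$. In the induced trivialization of $TM|_C$, the framing defining $\mathfrak{s}_0$ is constant along $C$: the canonical tangent framing is a constant frame in an $\R^2/\Z^2$ chart, hence a constant frame of $T\Sigma|_C$, and $\phi_\nu$ restricts exactly to the two canonical line-bundle sections. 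Since the constant trivialization of a trivial bundle over $S^1$ induces the non-bounding (``Lie group'') spin structure (the standard computation comparing the radial–angular frame to the constant frame on $D^2$), $\mathfrak{s}_0|_C$ is non-bounding for every such $C$. Hence $q_{\mathfrak{s}_0}$ is constant on the nonzero classes of $H_1(\Sigma;\Z_2)$, and for a symplectic basis $a,b$ this common value $c$ must satisfy $c=q_{\mathfrak{s}_0}(a+b)=q_{\mathfrak{s}_0}(a)+q_{\mathfrak{s}_0}(b)+a\cdot b=2c+1=1$, so $q_{\mathfrak{s}_0}$ is the Arf-one form. I expect this to be the delicate step, not because anything is deep but because it requires pinning down several orientation and framing conventions at once and correctly matching the constant $S^1$-framing with the non-bounding spin structure.

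Putting the two computations together, $q_{\mathfrak{s}_M|_\Sigma}\neq q_{\mathfrak{s}_0}$, hence $\ds\neq0$. The hypothesis $[\Sigma]=0$ in $H_2(M;\Z)$ supplies the Seifert solid used throughout, and the vanishing of $H_1(\Sigma;\Z_2)\to H_1(M;\Z_2)$ ensures $\ds$ is an unambiguous element of $H^1(\Sigma;\Z_2)$, so the conclusion is meaningful as stated.
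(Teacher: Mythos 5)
Your proof is correct, and it takes a genuinely different route from the paper's. You interpret $\ds$ as the difference of two honest spin structures on $T\Sigma$ (after converting $\mathfrak{s}_M|_\Sigma$ from $TM|\Sigma$ to $T\Sigma$ using the Seifert normal framing $\phi_\nu$), invoke Johnson's bijection between spin structures on a surface and quadratic refinements of the mod~$2$ intersection form, and then compute the two Arf invariants separately: $\Arf=0$ for the restricted ambient structure because $(\Sigma,\mathfrak{s}_M|_\Sigma)$ spin-bounds $N$ and the Arf invariant detects $\Omega_2^{\rm spin}\cong\Z_2$; and $\Arf=1$ for $\mathfrak{s}_0$ because the canonical tangent framing is the Lie group framing, which restricts to the nonbounding spin structure on every embedded essential circle. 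The paper instead works entirely with the Rokhlin--Freedman--Kirby quadratic form $q$ defined by ambient surfaces $F$ (possibly nonorientable) bounded by circles $C\subset\Sigma$, proves the identity $\langle\ds,C\rangle=q[C]+1$ via a relative Whitney sum formula, and kills $\Arf(q)$ by doubling a compact codimension-$0$ piece $K\supset N$ and applying the characteristic-surface congruence $\Arf(q)\equiv(\Sigma\cdot\Sigma-\sigma(DK))/8$. Your approach is shorter and more conceptual, trading the doubling trick and the Rokhlin congruence for the spin-bordism interpretation of the Arf invariant; what it does not make explicit (but the paper's route does) is the precise value of $\ds$, namely the Poincar\'e dual of the unique nonzero class on which $q$ is nonzero, which falls out of the paper's formula $\langle\ds,C\rangle=q[C]+1$. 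If you want to close the loop between the two arguments, note that your $q_{\mathfrak{s}_M|_\Sigma}$ is exactly the paper's Rokhlin form and your observation $q_{\mathfrak{s}_0}\equiv1$ is exactly the ``$+1$'' in the paper's identity. The steps you flag as delicate---matching the constant framing of $T\Sigma|_C$ in the $(TC,\nu_C)$ trivialization to the nonbounding spin structure, and confirming that $\phi_\nu$ intertwines the two restrictions of $\mathfrak{s}_M$---are indeed the points where a sign or convention error would sink the argument, but your handling of them is correct; citing Johnson's theorem and the isomorphism $\Omega_2^{\rm spin}\cong\Z_2$ explicitly would make the writeup self-contained.
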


\begin{proof}
First note that $\ds$ is well-defined in $H^1(\Sigma;\Z_2)$, using the outward normal to any Seifert solid $N$, since the hypotheses guarantee that $N$ exists (Section~\ref{Framing}) and $A_{\rm spin}=H_1(\Sigma)$ (Sections~\ref{DeltaNu} and~\ref{ds}). We wish to evaluate $\ds$ via the quadratic form $q$ on $H_1(\Sigma;\Z_2)$ outlined by Rokhlin \cite{R} and described in more detail by Freedman and Kirby \cite{FK}. To define this, consider any embedded circle $C$ in $\Sigma$. Since $C$ vanishes in $H_1(M;\Z_2)$, it bounds a compact (not necessarily orientable) surface $F$ in $M$. By twisting this around its boundary if necessary, we can arrange the mod 2 intersection number $F\cdot\Sigma$ to vanish, and then add tubes to $F$ so that it intersects $\Sigma$ only along its boundary. We define $F\cdot F$ by intersecting $F$ with a transverse copy of itself, whose boundary is pushed off using a nowhere-zero vector field $\tau$ tangent to $\Sigma$ but normal to $C$. Then the quadratic form $q\co H_1(\Sigma;\Z_2)\to\Z_2$ is defined by setting $q[C]=F\cdot F$ for every such $C$.

To relate $\ds$ to $q$, note that when $C$ is essential in $\Sigma$, $\tau$ determines the canonical framing $\hat\tau$ of $T\Sigma$ along $C$. The framing $\hat n$ of $\nu\Sigma|C$ determined by the outward normal $n$ from $F$ differs from that induced by $N$ by an even number of twists. This is because the closure of $\inter F\cap N$ is a compact 1-manifold, whose boundary must be an even number of points in $C$. These points occur where $n$ agrees with the outward normal from $N$, so count the mod 2 degree of $n$ relative to $N$. It follows that the spin structure given by the framing $\hat\tau\oplus\hat n$ of $TM|C$ agrees with the one induced by $\Sigma$ and $N$. Since the given spin structure on $TM$ is obviously defined on $TM|F$, it agrees with $\hat\tau\oplus\hat n$ over $C$ if and only if the latter extends as a spin structure over $TM|F$. Now we can apply the Whitney sum formula for relative Stiefel--Whitney numbers:
$$\langle\ds,C\rangle =w_2(TM|F,\hat\tau\oplus\hat n)=w_2(\nu F,\hat\tau)+w_1(\nu F,\hat\tau)\smile w_1(TF,\hat n)+w_2(TF,\hat n)=$$ $$F\cdot F+w_1^2(F,C)+\chi(F)|_2=q[C]+1$$
for all essential $C$ in $\Sigma$. Note that both factors in the cup product can be identified with the obstruction $w_1(F,C)$ to orienting $F$ since $TM$ is orientable. The resulting square vanishes unless  $F$ is a M\"obius band summed with tori, which is precisely the case that $\chi(F)$ is even.

There are two isomorphism classes of quadratic forms on $H_1(\Sigma;\Z_2)$, distinguished by the Arf invariant. If $\Arf(q)$ vanishes, then $q$ is nonzero on a unique (nonzero) class in $H_1(\Sigma;\Z_2)$, and $\ds$ is Poincar\'e dual to it by the displayed formula. If $\Arf(q)$ were nonzero, then $q$ would be nonzero on all three nontrivial classes, implying $\ds=0$. To rule this out, let $K$ be a compact, codimension-0 submanifold of $M$ containing $N$ and surfaces $F$ for the three nontrivial classes. The double $DK$ is spin with signature $\sigma=0$.  This construction preserves $\Sigma$ and its Arf invariant, and its homology class is characteristic (in fact 0) in $H_2(DK;\Z_2)$. By \cite{R} or \cite{FK}, $\Arf(q)\equiv(\Sigma\cdot\Sigma -\sigma(DK))/8=0$ mod 2 as required.
\end{proof}

\begin{example}\label{R4mod2} Under the hypotheses of the previous proposition, we have characterized $\ds$ (and hence $\Delta_T+\Delta_\nu$ mod 2 in the formal Engel case) as Poincar\' e dual to the unique $\Z_2$-class of circles in $\Sigma$ bounding surfaces $F$ with $F\cdot F$ (mod 2) nonzero and $\inter F$ disjoint from $\Sigma$. For a simple example, every torus $\Sigma\subset S^3\subset\R\times S^3$ is the boundary of a tubular neighborhood of a knot. The 0-longitude and meridian bound oriented surfaces $F$ in $S^3$ with $\inter F\cap\Sigma=\emptyset$ but $F\cdot F=0$. Thus, $\ds$ is Poincar\'e dual to the remaining mod 2 class, their sum, which bounds a disk with odd $F\cdot F$ in the 4-manifold. Note that $\chi(F)$ is odd for each of these surfaces, so $n$ does not extend over $TF$. To get a nonorientable $F$, split the solid torus as a sum of two nonorientable line bundles. Each is a M\" obius band $F$ whose boundary wraps twice longitudinally around $\Sigma$, and an odd number of times meridionally, so it is mod 2 homologous to the meridian. Again we see that this class is not dual to $\ds$, but this time it is the cup product that is nonzero in the displayed computation, with each factor dual to the central circle. (A push in the fourth coordinate shows $F\cdot F$ still vanishes.)
\end{example}

\section{The range and applications of the formal invariants}\label{Examples}

We now investigate the range of combinations of formal invariants of transverse tori in Engel manifolds. We first show that in overtwisted Engel manifolds up to homotopy, all possible combinations are realized by any fixed torus, and in a sense, the invariants classify such tori. This is analogous to the behavior of self-linking numbers in overtwisted contact 3-manifolds, although the contact setting has the additional advantage that compactly supported homotopy through contact structures implies isotopy \cite{Gray}. In the hope of recognizing tight Engel structures (if these exist), we then study the range of invariants realized by isotopy classes in a fixed Engel manifold. We find that broad ranges of the invariants are often realized, but various gaps remain. These could potentially be useful for identifying tight Engel structures (Section~\ref{Future}). To compute the primary invariants, we often use the method of Section~\ref{Comp}, projecting to an embedded (or immersed) torus in a contact 3-manifold and applying Proposition~\ref{comp}. (The immersed case follows by pulling back to an embedding.)

\subsection{Overtwisted Engel structures}\label{Overtwisted}

This section examines the formal invariants of transverse tori in the setting of overtwisted Engel manifolds up to homotopy through such structures. Along with our classification of these invariants (Theorem~\ref{formal}),  we rely heavily on the h-Principle for overtwisted Engel structures developed in \cite{PV}. The latter shows that every formal Engel structure is homotopic to an overtwisted Engel structure, with their Theorem~1.1 presenting this in a suitable relative and parametric form. Overtwistedness is characterized by the presence of an {\em overtwisted 4-disk}, as defined in \cite{PV}. To include the situation of linked tori, we assume throughout the section that $\Sigma$ is an embedded union of tori in $M$ with trivial normal bundles. If $M$ is noncompact, we allow $\Sigma$ to have infinitely many components, but require its embedding and isotopies to be proper.

Recall from Section~\ref{Classification} that for the plane field $\D$ of a formal Engel structure on $M$, $\Sigma$ becomes a {\em formal transverse embedding} when paired with a fiber homotopy $h$ of $\D|\Sigma$ sending it through monomorphisms to a normal plane field $\nu\Sigma$. A transverse embedding corresponds to the case of constant $h$. A {\em formal transverse isotopy} is a 1-parameter family of formal transverse embeddings, where $\D$ sometimes represents a 1-parameter family of plane fields. A formal transverse embedding transforms by formal transverse isotopy under homotopy of $\D$ and isotopy of $\Sigma$.

We now show that in the overtwisted Engel setting up to homotopy, formal transverse embeddings can be made transverse. This allows complete flexibility in realizing the formal invariants  (Corollaries~\ref{exist1} and \ref{exist2}). Then we show how to make formal transverse isotopies transverse (Theorem~\ref{unique}), implying the formal invariants are a complete set of transverse isotopy obstructions in this setting (Corollary~\ref{unique1}).

\begin{thm}\label{exist}
Suppose $\D$ is an overtwisted Engel structure on $M$ and $h$ makes $\Sigma$ into a formal transverse embedding in $(M,\D)$. Then $h$ is formally transverse isotopic to a constant homotopy, via some homotopy of $\D$ through overtwisted Engel structures. The final structure is overtwisted on $M-\Sigma$, and the homotopy is supported in a preassigned connected neighborhood of $\Sigma$ union an overtwisted 4-disk.
\end{thm}

\noindent Applying \cite[Remark~1.2]{PV} in the proof shows that the homotopy can actually be supported in any neighborhood of $\Sigma$ whose components each contain an overtwisted 4-disk.

\begin{cor}\label{exist1}
If $\D$ is an overtwisted Engel structure on $M$ then it is homotopic, through other such structures, to one in which $\Sigma$ is transverse, with Engel framing realizing any preassigned tangent and normal framings on $\Sigma$ whose sum respects the spin structure on $M$.
\end{cor}

\noindent In particular, all classes in $H^1(\Sigma)$ can be realized as $\Delta_T$ (defined componentwise), and given Seifert solids (not necessarily disjoint) for some components of $\Sigma$, all pairs with sum reducing mod 2 to $\ds$ can be realized as their corresponding invariants $(\Delta_T, \Delta_\nu)$. Note that any formal Engel structure is homotopic to an overtwisted Engel structure, and hence, to one for which all of the above conclusions apply.

\begin{proof}
Since the sum of the given framings on $\Sigma$ respects the spin structure on $M$, it is homotopic to the Engel framing, so the theorem applies. (The homotopy exists over the 1-skeleton by the definition of spin structures, and extends over the 2-cells since $\pi_2(\SO(4))=0$.)
\end{proof}

\begin{cor}\label{exist2}
Suppose $\Sigma$ is transverse in an overtwisted Engel structure $\D$ on $M$.
\begin{itemize}
\item[a)] Then there is a homotopy of $\D$ through overtwisted Engel structures, after which $\Sigma$ is again transverse, that realizes any preassigned combination of formal obstructions for the constant isotopy of $\Sigma$.

\item[b)] Suppose $\D$ is overtwisted on $M-\Sigma$, and $F$ is an isotopy of $f_0=\id_\Sigma$ with $f_1(\Sigma)$ disjoint from $\Sigma$. Then after a homotopy of $\D$ through overtwisted Engel structures, rel a neighborhood of $\Sigma$, $f_1(\Sigma)$  is also transverse, with $F$ realizing any preassigned combination of formal obstructions.
\end{itemize}
\end{cor}

\noindent That is, in either case we can realize any $D_T$ and $D_\nu$ with even sum and any choices of the secondary obstructions (including the torsion invariants of Remark~\ref{Theta}), and do this (a) fixing $\Sigma$ or (b) in a single Engel structure homotopic to $\D$.

\begin{proof}
By Theorem~\ref{formal}, there is a homotopy $h$ making $\Sigma$ a formal transverse embedding realizing the desired formal obstructions. Then (a) immediately follows from Theorem~\ref{exist}. For (b), we instead use $F$ to transport $h$ by a formal transverse isotopy to a homotopy making $f_1(\Sigma)$ formally transverse. Theorem~\ref{exist} then makes $f_1(\Sigma)$ transverse, by a homotopy of $\D$ supported away from $\Sigma$. In the new Engel structure, this transverse embedding is formally transverse isotopic to $(\Sigma,h)$ along $F$, so $F$ realizes the given obstructions.
\end{proof}

In \cite{PV}, a parametrized family of Engel structures is called an {\em overtwisted} family if it contains a corresponding parametrized family of overtwisted 4-disks, which they call a {\em certificate of overtwistedness}. It follows from the proof of Theorem~\ref{exist} that the resulting homotopy of $\D$ in the theorem and corollaries is such an overtwisted family, and if we also assume $\D$ is overtwisted on $M-\Sigma$ then the resulting certificate avoids the given surfaces for all $t$. This notion is also useful in the following theorem, which discusses when an isotopy between transverse surfaces is a transverse isotopy, up to homotopy through overtwisted Engel structures. We more generally allow parametrized families $\W\subset\D\subset\E$ of formal Engel structures that are only Engel for some parameter values.

\begin{thm}\label{unique}
Suppose that $F\co I\times\Sigma\to I\times M$ is an isotopy of $f_0=\id_\Sigma$ and that $(\W,\D,\E)$ is a 1-parameter family of formal Engel structures on $M$. Suppose that for parameter values $i=0,1$, $(\W_i,\D_i,\E_i)$ is an Engel structure that is overtwisted on $M-f_i(\Sigma)$, and $f_i(\Sigma)$ is $\D_i$-transverse. If $F$ extends to a formal $\D$-transverse isotopy $h$ between these transverse embeddings, then $h$ generates a homotopy rel $t=0,1$ from $(\W,\D,\E)$ to an overtwisted family of Engel structures in which $F$ is a transverse isotopy. If $(\W,\D,\E)$ is originally an overtwisted family of Engel structures, with a certificate disjoint from the corresponding surfaces $f_t(\Sigma)$, then the resulting homotopy of $(\W,\D,\E)$ is an overtwisted 2-parameter family of Engel structures (with the corresponding disjointness of the certificate).
\end{thm}

The resulting corollary gives a sense in which Theorem~\ref{formal} supplies a complete set of obstructions to an isotopy being transverse in the overtwisted setting. For a fixed Engel structure $\D$ on $M$, suppose $F$ is an isotopy between transverse embeddings. We will call $F$ a transverse isotopy {\em up to homotopy through Engel structures} if the constant homotopy of $\D$ is homotopic rel $t=0,1$, through Engel structures, to a 1-parameter family for which $F$ is transverse.

\begin{cor}\label{unique1}
For a given Engel structure $\D$ and isotopy $F$ between transverse embeddings, suppose there is an overtwisted 4-disk in $M$ disjoint from the image of $F$. Then $F$ is a transverse isotopy up to homotopy through Engel structures if and only if the obstructions of Theorem~\ref{formal}  vanish.
\end{cor}

\begin{proof}
By Theorem~\ref{formal}, $F$ extends to a formal transverse isotopy if and only if the obstructions vanish. If so, apply Theorem~\ref{unique} to the constant family $\D$ of Engel structures. Conversely, the hypothesized homotopy rel $t=0,1$ determines the required formal transverse isotopy.
\end{proof}

\begin{proof}[Proof of Theorem~\ref{exist}]
It suffices to assume $\D$ is overtwisted on $M-\Sigma$: By hypothesis, there is an overtwisted 4-disk in $M$. This is isotopic to a 4-ball $B$ disjoint from $\Sigma$. Dragging the plane field $\D$ along this isotopy can be interpreted as homotoping $\D$ through overtwisted Engel structures, after which $B$ is the required disjoint overtwisted 4-disk.

Next we arrange $\D$ to agree with a suitable local model near each component $\sigma$ of $\Sigma$, by temporarily sacrificing the Engel condition. The homotopy $h$ transfers the Engel framing to a tangent and normal framing of $\sigma$. Let $d\ge0$ be the divisibility of the corresponding class $\Delta_T(\sigma,h)\in H^1(\sigma)$. Any convex torus in a tight contact 3-manifold can be converted, by folding (Section~\ref{Lerp}) and the Flexibility Theorem~\ref{flex}, to one with $2d$ Reeb components with the same sign. Its transverse lift (Example~\ref{transLift}) in the prolongation then has $\Delta_T$ with divisibility $d$ (Proposition~\ref{comp}(c)). A neighborhood of this lift can be diffeomorphically mapped to a neighborhood of $\sigma$, sending $\Delta_T$ to $\Delta_T(\sigma,h)$. We can also control the normal directions so that $\W$ in the model maps to a line bundle determining the normal framing of $\sigma$ induced by $h$. Then $h$ generates a homotopy of formal Engel structures, ending with the model Engel structure near $\Sigma$.

The theorem now follows easily from \cite{PV}: The constructed formal Engel structure is Engel near $\Sigma$ and overtwisted Engel far from $\Sigma$, but only formally Engel on some intermediate region. However, \cite[Theorem~1.1]{PV} (with one-point parameter space $K$) generates a homotopy, supported in a given connected neighborhood of $\Sigma\cup B$, rel $B$ and a smaller neighborhood of $\Sigma$, yielding an Engel structure. The same theorem, with parameter space now an interval, adjusts the homotopy rel $t=0,1$, to be through Engel structures.
\end{proof}

\begin{proof}[Proof of Theorem~\ref{unique}]
We use a 1-parameter version of the previous proof. The given formal transverse isotopy transfers the family of Engel framings to tangent and normal framings on the embeddings $f_t(\sigma)$. We need to find a smooth family of local models respecting these framings and interpolating between the given Engel structures at $t=0,1$. Then we can homotope $(\W,\D,\E)$ to agree with these models, yielding the required transverse isotopy locally. By hypothesis, there is a $\D_i$-overtwisted 4-disk in $M-f_i(\Sigma)$ for $i=0,1$. Since these are isotopic in $M$ avoiding the surfaces $f_t(\Sigma)$ for each $t$, we can interpolate to get a certificate for $(\W,\D,\E)$ as in the proof of \cite[Corollary~1.4]{PV}. Their Theorem~1.1, with a 1-dimensional parameter space, implies our first conclusion (after a level-preserving self-diffeomorphism of $I\times M$ so that $F$ and the certificate temporarily appear constant). The remaining conclusion then follows similarly with a 2-dimensional parameter space. (In that case, we can also control the support of the homotopies as in the preceding proof.)

To construct the required smooth family of local models, we must first find suitable local models for each $f_i(\sigma)$ in $(M,\D_i)$, $i=0,1$, up to perturbation of $f_i$ and $\D_i$. Each $f_i(\sigma)$ canonically projects to a torus $\sigma_i$ with nonsingular characteristic foliation in a germ of a contact manifold (Section~\ref{EngelChar}). We can assume $\sigma_i$ is convex after a $C^\infty$-small perturbation. Since the foliation is nonsingular, the dividing set consists of a nonzero number of parallel essential curves. Then we have the same (finite) number of closed leaves, all parallel. (These are interleaved with the dividing curves since the annulus between any two closed leaves is convex with Legendrian boundary, so has nonempty dividing set.) However, we have no information about the number and signs of Reeb components, except that their signed count is determined by $\Delta_T$ (Proposition~\ref{comp}(c)). After a further perturbation, we can assume each return map is multiplication by a constant, so the foliation is modeled at each closed leaf by the $y$-axis in a diagonal plane in the standard contact $\R^3$ mod unit $y$-translation. For a model including all of $\sigma_i$, we instead work in the standard tight contact 3-torus $T^3$ given by $(\R^3/\Z^3,\sin(2\pi x)dy+\cos(2\pi x)dz)$. We construct a model that is $y$-invariant, and whose projection to the $yz$-coordinate torus is an orientation-preserving diffeomorphism. We choose an oriented circle $C_i$ in $\sigma_i$ intersecting each closed leaf once, and require its image in the model to have strictly increasing $z$-coordinate. Each closed leaf is then modeled in $T^3$ by setting $x$ equal to either 0 or $\frac12$ in some $y$-invariant annulus, with the choice of $x$ determined by the sign of intersection with $C_i$. (Note that the leaves are oriented oppositely for the two $x$-values since $\xi$ is horizontal at both values, but with opposite orientations.) We can now embed all of $\sigma_i$ in $T^3$, respecting the foliation: Follow $C_i$ around $\sigma_i$, embedding each closed leaf as it is encountered, and $y$-invariantly filling in the annuli between these leaves so that $C_i$ projects diffeomorphically to the $z$-coordinate circle. Reeb components then correspond to annuli spanning the two values of $x$. Thus, the $x$-winding number of the resulting torus $\sigma_i^*$ in $T^3$ is determined by their signed count, and hence by $\Delta_T(f_i)$ (Proposition~\ref{comp}(c)). Since the embedding respects the foliations, it extends contactomorphically to neighborhoods in the 3-manifolds (Section~\ref{SurfCont}) and then to a model for $f_i(\sigma)\subset M$ (up to the above perturbations) as a transverse lift in the prolongation of $T^3$ (Section~\ref{LocalEngel}).

Now we can find a smooth family of local models interpolating between $f_0(\sigma)$ and $f_1(\sigma)$. (We do this abstractly, postponing the embeddings in $M$ until the next paragraph.) First note that any isotopy of $\sigma_i^*$ in $T^3$ through $\xi$-transverse tori smoothly varies the local model initially displaying $f_i(\sigma)$. We guarantee $\xi$-transversality by varying through tori projecting diffeomorphically to the $yz$-coordinate torus. We also retain $y$-invariance. The given formal transverse isotopy guarantees that $\Delta_T(f_0)=\Delta_T(f_1)$. If these vanish, the corresponding tori $\sigma_i^*$ have vanishing $x$-winding number in $T^3$, so are isotopic to tori with constant $x$. We do not know the identification of $\sigma_1^*$ with $\sigma$ (which was constrained by the unknown direction of the dividing curves of $\sigma_1$). However, the foliations on the constant-$x$ tori have constant slope, and any slope can be realized by suitably choosing $x$. Thus, we can assume the diffeomorphism $\sigma_0^*\approx\sigma_1^*$ determined by their identifications with $\sigma$ preserves the foliation. Then the models agree, by uniqueness of the contact neighborhood and transverse lift (Example~\ref{transLift}). We can now fit the two smooth families together to interpolate as required. If $\Delta_T(f_i)$ is not zero, its direction determines that of the (unoriented) dividing curves of each $\sigma_i$ (Proposition~\ref{comp}(c)), which is horizontal in $\sigma^*_i$. If we choose the circles $C_i\subset \sigma_i$ to be images of the same oriented circle in $\sigma$, then the Reeb components of the two tori $\sigma^*_i$ have the same signed count relative to the positive $y$-direction, so these tori have the same $x$-winding number. Thus, we can isotope $\sigma_1^*$ onto $\sigma_0^*$. Their induced diffeomorphisms with $\sigma$ may still differ by horizontal Dehn twists. But we can also assume $\sigma_0^*$ has an annulus lying in the torus $x=0$, which is then foliated by circles. Since this part of the foliation is invariant under the allowed Dehn twists, we can again construct an interpolating family.

Finally, we must embed the local models into $M$ so that the 1-parameter family of Engel framings on the transverse tori $f_t(\sigma)$ in the models agrees with the family induced by the given formal transverse isotopy, up to homotopy rel $t=0,1$ preserving transversality of each $f_t(\sigma)$. These framings agree as required for each $t$ since they both come from the Engel framing on $\sigma$. However, they may differ on the family by some number of global rotations of all fibers of either $T\sigma$ or $\nu\sigma$ as $t$ increases. (This ambiguity, like the invariants $\Delta_T$ and $\Delta_\nu$, arises from $\pi_1(\SO(2)\times\SO(2))$ as we lift from the Grassmannian to $\SO(4)$.) But note that we had a choice in how to isotope $\sigma_1^*$ to its final location: Since $\xi$ undergoes a full rotation as we increase $x$ by 1 in $T^3$, translating $\sigma_1^*$ once around $T^3$ induces a full rotation of its characteristic line field. Thus, we can choose the family of models so that the framings suitably agree on $T\sigma$. We can then embed the models so that the normal framings also correspond.
\end{proof}

\subsection{Varying $\Delta_\nu$ with $\Delta_T=0$}\label{Dnu}

It remains to consider examples of a given torus $\Sigma$ in a 4-manifold $M$ with a fixed Engel structure. We investigate the range of primary formal invariants that can be realized by an isotopy of $\Sigma$ (usually $C^0$-small). As we saw in Proposition~\ref{comp}(b), it is common for transverse tori to have $\Delta_T=0$. Notably, the tori produced from our main existence Theorem~\ref{main0} always have this property. We now investigate the range of $\Delta_\nu$ under isotopies yielding vanishing $\Delta_T$. When $\Sigma$ is initially transverse, our resulting tori are simultaneously obtained both by isotopy and transverse homotopy, which can be simultaneously chosen $C^0$-small. (Examples that cannot be transversely homotopic, distinguished by $\Delta_T$, appear in the next section.) The main lemma of this section varies $D_\nu$ (hence $\Delta_\nu$) by adapting the stabilization operation for transverse knots in contact 3-manifolds. Note that for any torus $\Sigma$ in an Engel manifold, if a curve $C\subset\Sigma$ is transverse to the characteristic foliation (so avoids any singularities) then it is canonically oriented by requiring $\alpha|C$ to be positive. Recall that $\PD$ denotes Poincar\'e duality.

\begin{lem}\label{dnu}
Suppose $C\subset\Sigma\subset M$ is a circle (positively) transverse to the characteristic foliation of a transverse torus in an Engel manifold. Choose an integer $n\ge0$. Then there is an isotopy $F$ and a transverse homotopy $H$, both arbitrarily $C^0$-small, from the inclusion of $\Sigma$ to a transverse embedding $f_1=h_1$, such that $D_T(F)=D_T(H)=D_\nu(H)=0$ and $D_\nu(F)=2n\PD[C]\in H^1(\Sigma)$.
\end{lem}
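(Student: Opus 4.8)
The plan is to adapt the transverse-knot stabilization picture (Figure~\ref{stab}) to a neighborhood of the curve $C$ inside $\Sigma$. Since $C$ is positively transverse to the characteristic foliation and $\Sigma$ has no $\W$-tangencies, a tubular neighborhood of $C$ in $\Sigma\subset M$ can be put into a standard local model: I would first extend $\Sigma$ near $C$ to a $\W$-transverse contact $3$-manifold $N$, so that $C$ is transverse to $\xi=\E\cap TN$, then use the cylindrically symmetric local model of a transverse knot (the $z$-axis in $(\R^3,dz+\frac12 r^2 d\theta)$ mod unit $z$-translation, as in the proof of Proposition~\ref{helices}). In these coordinates $C$ is the $z$-axis, $\Sigma$ locally contains a small annular collar, and $\W$ is swept out by the fibers of the prolongation, so one has good simultaneous control of $\D$, $\E$ and $T\Sigma$.

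Next I would perform the stabilization of Figure~\ref{stab} on $C$, done symmetrically in the transverse-knot front projection so that it can be realized either by an arbitrarily $C^0$-small isotopy or by an arbitrarily $C^0$-small transverse homotopy, exactly as in the paragraph following Proposition~\ref{helices}: the transverse homotopy $H$ simply carries along the blackboard framing on $\xi$ (hence on $\E/\D$ and on $\D/\W$), while the isotopy $F$ drags $\Sigma$ across the added loop, twisting the normal framing induced by a Seifert solid by $-2$ relative to the one induced by $\W$. Because the modification is supported in a collar of $C$ and does not disturb the characteristic foliation outside that collar (and the foliation can be restored to the original simple/nonsingular form by a $C^1$-small isotopy, as in the ``transverse twist'' argument in the proof of Theorem~\ref{main1}), both $F$ and $H$ leave the tangent framing induced by $\E/\D$ unchanged up to homotopy, so $D_T(F)=D_T(H)=0$; and since the transverse homotopy $H$ never changes which Seifert solid the normal framing is measured against (it just rotates $\W$ out of $T\Sigma$, carrying the framing along), $D_\nu(H)=0$. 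The single stabilization contributes exactly $2\,\PD[C]\in H^1(\Sigma)$ to $D_\nu(F)$, since the framing change is concentrated along $C$ and has winding number $2$ there; performing the move $n$ times in disjoint collars of $C$ (or, equivalently, doing a single $n$-fold stabilization) yields $D_\nu(F)=2n\,\PD[C]$, and the case $n=0$ is the empty isotopy.

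The main obstacle I expect is bookkeeping of signs and framings: one must check carefully that the stabilization contributes $+2$ (not $-2$) to the $\W$-versus-Seifert normal winding along $C$ with the orientation conventions of Section~\ref{Orientation} and the canonical orientation of $C$ by $\alpha|C>0$, and that it genuinely contributes \emph{nothing} to $D_T$ — i.e.\ that the tooth added in the front projection does not inadvertently rotate the $\E/\D$-framing of $T\Sigma$ along the longitudinal direction. The first point is the self-linking-drops-by-$2$ computation of Figure~\ref{stab} read through the prolongation dictionary ($w$ playing the role of $-x$); the second follows because the modification is supported in a disk-bundle over $C$ and is $C^0$-small, so it is trivial on $H^1$ in the longitudinal class, leaving $\Delta_T$ a multiple of $\PD[C]$ which one checks is $0$ because the characteristic foliation near $C$ is unchanged (no Reeb components are created, by the symmetric choice in the stabilization). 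Once these sign and framing verifications are in place, the rest is a routine assembly of the $C^0$-small isotopy/homotopy from the local model.
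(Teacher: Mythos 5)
Your approach diverges from the paper's at the crucial step, and I believe it contains a genuine error rather than just a cosmetic one. You propose to stabilize $C$ itself as a transverse knot in the $\W$-transverse $3$-manifold $N$ (using the cylindrical model $(\R^3,dz+\tfrac12 r^2d\theta)$ and the front projection of $C$ in $N$). The paper instead stabilizes the \emph{characteristic leaves} crossing $C$, viewed as $\beta$-transverse curves in the $wxy$-projection of the local prolongation model: it is the extra coordinate $w$ (the prolongation fiber, i.e.\ $\W$), not the front-projection direction in $N$, that the added loop pushes into. These are different moves, and only the second one produces the advertised change in the $\W$-framing of $\nu\Sigma=\D|\Sigma$.

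The symptom of the confusion is your claim that the framing change ``has winding number $2$'' as one traverses $C$ (you repeat this in the last paragraph: ``the $\W$-versus-Seifert normal winding along $C$''). That is inconsistent with the conclusion. If $C'$ is a dual circle with $C'\cdot C=1$, then $\langle 2n\,\PD[C],C\rangle = 2n\,(C\cdot C)=0$ and $\langle 2n\,\PD[C],C'\rangle = 2n\,(C\cdot C')=-2n$; so a class equal to $2n\,\PD[C]$ has \emph{zero} winding along $C$ and winding $\mp 2n$ along $C'$. A framing change winding $2$ along $C$ would instead be $\pm 2\,\PD[C']$. In the correct construction the stabilization of each constant-$z$ leaf is performed uniformly in $z$ (the $C$-direction), so the framing mismatch accumulates only as one traverses a leaf ($C'$), not as one traverses $C$; this is exactly why the answer comes out as a multiple of $\PD[C]$. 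Stabilizing $C$ as a transverse knot in $N$ would, if anything, concentrate winding in the $C$-direction and produce the wrong class. It also isn't clear how your move extends to an isotopy of $\Sigma$ at all: the transverse-knot stabilization of $C$ pushes $C$ in the $\L=\D/\W$ direction within $N$, which is transverse to $\Sigma$, so one would have to invent an extension to the rest of $\Sigma$; the paper avoids this by acting on whole leaves, parametrized by $z$.

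One further point: the paper's local model (Proposition~\ref{annuli}) uses $(\R^3,dz+xdy)$ with $\Sigma$ as the $yz$-plane precisely so that both $C$ (the $z$-axis) and the characteristic leaves (constant $z$) are visible and under control simultaneously, and so that the prolongation coordinate $w$ enters cleanly via $\beta=\sin(w)dx+\cos(w)dy$. The cylindrical model you invoke is tailored to stabilizing a transverse knot in a $3$-manifold and does not give you a clean picture of the annular collar of $C$ in $\Sigma$ or of $\W$. I'd suggest redoing the argument in the paper's model, stabilizing the $\beta$-transverse leaves in the $wy$-plane rather than $C$ in the $N$-front-projection, and re-examining where the winding number actually lives.
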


\begin{proof}
Note that $C$ is primitive since it is embedded and homologically essential (for otherwise the foliation would have a singularity). After canonically projecting $\Sigma$ into its associated contact 3-manifold, we have a local model of $C$ that identifies it with the positively oriented $z$-axis in $(\R^3,dz+xdy)$ mod unit $z$-translation, and sends its neighborhood in $\Sigma$ into the $yz$-plane (e.g.\ by Proposition~\ref{annuli}). Then the characteristic foliation is given by lines of constant $z$. A neighborhood of $C$ in $M$ is now identified with one in the prolongation of the model via the induced line field $\L$ as in Section~\ref{LocalEngel}. Since $\Sigma$ is transverse, $\L$ is positively transverse to the plane, and it is cut out from $\xi$ by the 1-form $\beta=\sin(w) dx +\cos(w) dy$ of Section~\ref{Prolong}. The characteristic foliation is oriented (Section~\ref{OrientL}) so that $\beta$ is positive on it. (If we choose the model so that $(y,z)$ gives the canonical orientation of $\Sigma$, then the foliation is parallel to the positively oriented $y$-axis, and $\L$ is oriented toward increasing $x$.) Thus, in each slice of constant $z$, the corresponding leaf is exhibited as a positively $\beta$-transverse curve in the $wy$-plane in $wxy$-space. We can stabilize this as in Figure~\ref{stab} (that projects out the $w$-axis). Stabilizing a transverse knot in $\R^3$ lowers its self-linking by 2, by raising the 0-framing by 2 relative to the framing induced by the contact structure (Section~\ref{KnotOps}). In our case, we do not have a canonical 0-framing, but the same change occurs to any preassigned local framing. Stabilizing $n$ times, varying smoothly with respect to $z$ as $\ker\beta$ varies through transverse line fields in the diagram,  creates a new surface that is both isotopic and transversely homotopic to $\Sigma$. (Each resulting immersion in the homotopy is transverse since its new characteristic line field, which is nearly horizontal in the $xyz$-space model, determines a tangent framing on which $\beta\wedge\alpha$ is positive.) The homotopy (denoted $H$) carries along the original Engel normal framing, and the isotopy ($F$) carries it topologically but not through Engel framings. The resulting mismatch shows that $\D_\nu(F)$ has value $-2n$ on a circle $C'$ intersecting $C$ once with $C'\cdot C=1$. (Since $C$ is primitive, $C'$ exists. The order  of the factors is chosen so that $C'$ crosses $C$ in the same direction as the foliation, cf.~Section~\ref{OrientL}.) The new embedding $f_1$ only changes the foliation by a small perturbation near $C$. Thus, $D_T(F)$ and $\langle D_\nu(F),C\rangle$ vanish and $\langle D_\nu(F),C'\rangle=-2nC'\cdot C=+2nC\cdot C'$, so $D_\nu(F)=2n\PD[C]$. The other invariants listed in the lemma must vanish since $H$ is a transverse homotopy (Proposition~\ref{transHom}).
\end{proof}

\begin{cor}\label{Z}
If the foliation on a transverse $\Sigma$ has a closed leaf $L$, then $\Sigma$ can be changed as above so that $D_\nu(F)=2n\PD[L]$ for any integer $n$.
\end{cor}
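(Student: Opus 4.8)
The plan is to deduce this directly from Lemma~\ref{dnu}: for each sign of $n$ I will produce a circle $C\subset\Sigma$ transverse to the characteristic foliation $\F$ with $\PD[C]=\pm\PD[L]$, and then quote the lemma. The point is that a closed leaf, unlike a generic transverse circle, is flanked on each of its two sides by a transverse circle parallel to it, and the canonical orientations of these two circles are \emph{opposite}; this is what lets the lemma be applied with both signs.

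First I would normalize $L$. We may assume $L$ is nondegenerate: otherwise a $C^0$-small isotopy of $\Sigma$ supported near $L$ that perturbs $\F$ (as in the proof of Scholium~\ref{Reeb}, e.g.\ by realizing a neighborhood of $L$ as a Legendrian lift in a prolongation) makes the holonomy around $L$ a nontrivial scaling, so that leaves on each side spiral monotonically toward or away from $L$; in the applications the foliation is simple, so this is automatic. Model a neighborhood of $L$ in $\Sigma$ by $S^1\times(-\delta,\delta)$ with coordinates $(\theta,y)$, $L=\{y=0\}$, $\partial_\theta$ positively tangent to $L$ (hence to $\F$ along $L$), and $(\partial_\theta,\partial_y)$ a positive basis of $T\Sigma$ (replace $y$ by $-y$ if needed). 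Since $\Sigma$ is transverse, $\F=\ker(\alpha|T\Sigma)$; writing $\lambda=\alpha|T\Sigma$, the function $\lambda(\partial_\theta)$ vanishes along $L$, while the orientation convention of Section~\ref{Orientation} (a vector positively tangent to $\F$ followed by one on which $\alpha$ is positive gives the orientation of $\Sigma$) forces $\lambda(\partial_y)>0$ near $L$. After the normalization, along each circle $C_\epsilon=\{y=\epsilon\}$ with small $\epsilon\ne0$ the function $\lambda(\partial_\theta)$ is nonzero of constant sign, this sign is reversed between $C_\epsilon$ and $C_{-\epsilon}$, and $\F$ is transverse to $C_\epsilon$. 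Hence, orienting $C_\epsilon$ by $\alpha|C_\epsilon>0$ as Lemma~\ref{dnu} requires, one gets $[C_\epsilon]=[L]$ for $\epsilon$ of one sign and $[C_\epsilon]=-[L]$ for $\epsilon$ of the other (which side is which depends on whether $L$ is attracting or repelling).

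Now the corollary is immediate. For $n\ge0$, apply Lemma~\ref{dnu} to a circle $C=C_\epsilon$ with $[C]=[L]$ and $n$ stabilizations; this yields the asserted $C^0$-small isotopy $F$ and transverse homotopy $H$ with $D_T(F)=D_T(H)=D_\nu(H)=0$ and $D_\nu(F)=2n\,\PD[C]=2n\,\PD[L]$. For $n<0$, apply Lemma~\ref{dnu} instead to a circle $C=C_\epsilon$ on the other side of $L$, where $[C]=-[L]$, with $-n$ stabilizations, obtaining $D_\nu(F)=2(-n)\,\PD[C]=-2(-n)\,\PD[L]=2n\,\PD[L]$.

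The main obstacle is exactly this sign bookkeeping. Stabilization can only ever lower the reference framing, so any single transverse circle $C$ realizes $2n\,\PD[C]$ for just one sign of $n$; the content of the corollary is that the two sides of a closed leaf carry transverse pushoffs of $L$ in the opposite homology classes $\pm[L]$, and verifying that reversal against the orientation conventions of Section~\ref{Orientation} is the step requiring care.
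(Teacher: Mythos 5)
Your proposal is correct and takes essentially the same route as the paper: after perturbing so that $L$ is nondegenerate, push $L$ off to a parallel transverse circle, and observe that the canonical orientation (by $\alpha|C>0$) reverses between the two sides, so $[C]=\pm[L]$ can be realized and Lemma~\ref{dnu} applied with the sign of $n$ dictated by the choice of side. The paper's proof is just the two-sentence version of this; your extra bookkeeping with $\lambda(\partial_\theta)$ correctly fills in why nondegeneracy gives a transverse pushoff on each side with opposite sign.
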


\begin{proof}
After a small perturbation, we can assume $L$ is nondegenerate. A parallel pushoff $C$ of $L$ is then transverse to the foliation, and its orientation depends on which side it has been pushed toward.
\end{proof}

\begin{cor}\label{halfplane}
If the foliation on a transverse $\Sigma$ has both a closed leaf $L$ and a transverse arc $A$ intersecting $L$ in two points, then there are homotopies $F$ and $H$ as in Lemma~\ref{dnu} realizing all even classes $\eta\in H^1(\Sigma)$ with $\langle\eta,L\rangle\le0$ as $D_\nu(F)$ with $\Delta_T(f_1)=0$.
\end{cor}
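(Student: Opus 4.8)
The plan is to combine two families of $C^0$-small stabilizations. One family is already supplied by Corollary~\ref{Z}: pushing a parallel copy of $L$ to one side or the other and applying Lemma~\ref{dnu} realizes $D_\nu(F)=2q\,\PD[L]$ for every $q\in\Z$, with $D_T(F)=0$. The second family will be stabilizations along a transverse circle built from $A$, chosen so that its homology class pairs with $[L]$ to $\pm1$; composing the two families then sweeps out the claimed half-space. As a preliminary I would perturb $L$ to be a nondegenerate closed leaf, so that it admits a transverse parallel pushoff (as in the proof of Corollary~\ref{Z}) and survives, up to a small isotopy, under the $C^0$-small perturbations of the foliation produced below.

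The heart of the argument is to produce from $A$ an embedded circle $C$ that is transverse to the characteristic foliation and satisfies $\langle\PD[C],L\rangle=[C]\cdot[L]=-1$, so that $\{[L],[C]\}$ is a basis of $H_1(\Sigma)$. Since $A$ is transverse to the foliation while $L$ is a leaf, the two intersection points $q_1,q_2$ of $A$ with $L$ are transverse crossings; let $A_1\subset A$ be the sub-arc between them, which has interior disjoint from $L$. Tracking which side of $L$ the arc lies on across each crossing shows that $A_1$ is an \emph{essential} arc of the annulus $\Sigma\setminus L$. I would then replace the arc of $L$ joining $q_1$ to $q_2$ by a transverse pushoff of it (transverse to the foliation by the argument in the proof of Corollary~\ref{Z}), attach it to $A_1$, and round the two corners within the connected set of directions transverse to the (nonsingular) foliation; the result is an embedded transverse circle $C$ whose homology class is $A_1$ capped off along $L$. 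Essentiality of $A_1$ gives $[C]\cdot[L]=\pm1$, and the orientation conventions of Section~\ref{Orientation} fix the sign to $-1$. Lemma~\ref{dnu} applied to $C$ now yields, for each $n\ge0$, a $C^0$-small isotopy $F$ and transverse homotopy $H$ with $D_T(F)=D_T(H)=D_\nu(H)=0$ and $D_\nu(F)=2n\,\PD[C]$.

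To conclude I would compose. A single application of Lemma~\ref{dnu} or Corollary~\ref{Z} disturbs the foliation only near the circle used, so after each move $L$ persists as a nondegenerate closed leaf (up to a small isotopy) and still carries a transverse arc meeting it in two points. Thus these moves may be concatenated; $C^0$-small isotopies compose to $C^0$-small isotopies, the relative invariant $D_\nu$ is additive, and $D_T$ remains $0$. Concatenating the $C$-move (with parameter $n\ge0$) with the $L$-move gives $D_\nu(F)=2n\,\PD[C]+2q\,\PD[L]$ for all $n\ge0$, $q\in\Z$. Since $[C]\cdot[L]=-1$ and $[L]\cdot[L]=0$, evaluating on $[L]$ gives $\langle D_\nu(F),L\rangle=-2n\le0$, and because $\{[L],[C]\}$ is a basis these are \emph{all} the even classes $\eta$ with $\langle\eta,L\rangle\le0$. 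Finally $D_T(F)=0$, so $\Delta_T(f_1)=\Delta_T(\Sigma)$, which is $0$ in the situation at hand.

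The step I expect to be the main obstacle is the construction of $C$: getting a circle that is at once embedded, transverse to the foliation, and of intersection number exactly $-1$ with $L$ (rather than $0$ or $\pm2$). This intersection number is controlled by whether $A_1$ is boundary-parallel or essential in $\Sigma\setminus L$, hence by the signs of the crossings $q_1,q_2$; one must verify that the hypothesis ``$A$ meets $L$ in two points'' forces (or can be arranged into) the essential case, and that the corner-rounding and the choice of transverse pushoff of $L$ can be carried out without ever losing transversality. The remaining steps are bookkeeping with the relative invariants together with appeals to Lemma~\ref{dnu}, Corollary~\ref{Z}, and the orientation conventions of Section~\ref{Orientation}.
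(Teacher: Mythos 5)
Your argument is correct, but it takes a mildly different route from the paper's. The paper realizes every primitive class $\zeta$ with $L\cdot\zeta\ge0$ directly by a single positively transverse circle $C_\zeta$ built from parallel copies of $A$ joined to pushed-off arcs of $L$, and then applies Lemma~\ref{dnu} once to $C_\zeta$; no composition of moves is needed. You instead construct one basis circle $C$ from $A_1$ and a pushed-off $L$-arc, and obtain the whole half-space by concatenating the $C$-stabilizations with $L$-stabilizations (Corollary~\ref{Z}), relying on additivity of $D_\nu$ and on structural stability of the nondegenerate leaf $L$ to make the concatenation legitimate. Both routes work; the paper's avoids the persistence bookkeeping at the cost of a slightly more elaborate one-shot construction of $C_\zeta$.

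On the point you flagged: the essential case is forced, not merely arrangeable. Since the foliation is nonsingular and oriented on the transverse torus $\Sigma$, any arc transverse to it is canonically oriented (so that its tangent followed by the foliation direction gives the orientation of $\Sigma$), and with that orientation both crossings of $A$ with the leaf $L$ have the same sign. Cutting $\Sigma$ along $L$, this means the lift of $A_1$ runs from one boundary circle of the annulus to the other, so $A_1$ is essential and the smoothed circle $C$ meets $L$ geometrically once, giving $[C]\cdot[L]=\pm1$. Two small gaps worth filling in your write-up: (1) you assert $\Delta_T(\Sigma)=0$ without proof; the reason (as in the paper) is that the characteristic line field has winding number $0$ along $L$ (it is tangent there) and along the circle built from $A$ (being everywhere transverse to that circle, it never aligns with its tangent, hence has zero relative winding), and these two classes span $H_1(\Sigma)$; (2) the final $\Delta_T(f_1)=0$ also needs $D_T=0$ for the concatenation, which follows from additivity of $D_T$ together with $D_T=0$ for each factor from Lemma~\ref{dnu}.
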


Dually, we can equivalently realize all classes $2\zeta\in H_1(\Sigma)$ with $L\cdot\zeta\ge0$.

\begin{proof}
As before, assume $L$ is nondegenerate. Every primitive class $\zeta\in H_1(\Sigma)$ with $L\cdot\zeta\ge0$ can be realized by a positively transverse circle $C$ in $\Sigma$, made by connecting parallel copies of $A$ to arcs of $L$ pushed off to the side guaranteeing positivity as in the previous corollary. Lemma~\ref{dnu} now realizes all required classes. Since the original foliation agrees with the canonical trivialization of $T\Sigma$ along the subset $L\cup A$ carrying $H_1(\Sigma)$, $\Delta_T(\Sigma)=0=\Delta_T(f_1)$ (the latter since $D_T(F)=0$).
\end{proof}

\begin{thm}\label{cofinite}
For any torus $\Sigma$ embedded with trivial normal bundle in an Engel manifold $M$, there is a family $F^i$, for $i\in\Z^+$, of $C^0$-small isotopies from the inclusion to transverse embeddings $f^i$, such that the invariants $D_\nu(\hat F^i)$, where $\hat F^i$ is the induced isotopy from $f^1$ to $f^i$, range over all but finitely many elements of $2H^1(\Sigma)$. The transverse embeddings $f^i$ have $\Delta_T=0$ and comprise no more than three transverse homotopy classes.
\end{thm}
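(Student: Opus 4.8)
The plan is to assemble this theorem from Corollaries~\ref{Z} and~\ref{halfplane}, using the freedom in Theorem~\ref{main1} to prescribe the direction of a simple foliation in three different ways, so as to defeat the half-space limitation of Corollary~\ref{halfplane}. Nothing new and geometric is needed beyond Lemma~\ref{dnu} and its corollaries; the work is bookkeeping, with one genuine choice to make.

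\emph{Step 1: three standard models.} Fix a basis of $H_1(\Sigma)\cong\Z^2$ and set $\ell_1=(1,0)$, $\ell_2=(0,1)$, $\ell_3=(-1,-1)$. First I would apply Corollary~\ref{standardize}, then Theorem~\ref{main1} and a transverse pushoff (Proposition~\ref{push}), to produce for each $j$ a $C^0$-small isotopy $G_j$ from the inclusion of $\Sigma$ to a transverse torus $\Sigma_j$ carrying a simple foliation of direction $\ell_j$ (Definition~\ref{stdForm}). Each $\Sigma_j$ has $\Delta_T(\Sigma_j)=0$, since a simple foliation is trivial on a basis of $H_1(\Sigma)$ consisting of a closed leaf and a transverse circle, and the canonical reference framing has degree $0$ on such curves.

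\emph{Step 2: the three families.} A simple foliation of direction $\ell_j$ has a nondegenerate closed leaf $L_j$ of class $\ell_j$, and, the line field having bounded slope in the standard parametrization, plenty of transverse arcs crossing $L_j$; combined with arcs of $L_j$ these realize every primitive class on the positivity-dictated side of $L_j$, while classes parallel to $L_j$ are handled by Corollary~\ref{Z}. Thus Corollary~\ref{halfplane} provides, for every even class $\eta$ in the closed half-plane $\bar H_j=\{\eta\in 2H^1(\Sigma):\langle\eta,\ell_j\rangle\le0\}$, a $C^0$-small isotopy from $\Sigma_j$ (which is simultaneously a $C^0$-small transverse homotopy) to a transverse embedding with $D_\nu=\eta$, $D_T=0$, and vanishing final $\Delta_T$. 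Concatenating each such isotopy with $G_j$ yields a $C^0$-small isotopy from the inclusion of $\Sigma$; I take $F^1=G_1$ (trivial stabilization) and enumerate all of these, over $j\in\{1,2,3\}$ and over $\bar H_j$, as $\{F^i\}_{i\in\Z^+}$, with endpoints $f^i$.

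\emph{Step 3: conclusion.} Each $f^i$ has $\Delta_T=\Delta_T(\Sigma_j)+D_T(\text{stabilization})=0$, and within family $j$ every $f^i$ is transversely homotopic to $\Sigma_j$ (via the homotopy $H$ of Lemma~\ref{dnu}), so the $f^i$ fall into at most three transverse homotopy classes. Writing $\hat F^i=\overline{F^1}\cdot F^i$, additivity of the relative invariant under concatenation gives $D_\nu(\hat F^i)=D_\nu(F^i)-D_\nu(G_1)$, where $D_\nu(F^i)=D_\nu(G_j)+\eta$ with $\eta$ ranging over $\bar H_j$. Hence the set of realized values is $\bigcup_{j=1}^3\big(D_\nu(G_j)+\bar H_j\big)-D_\nu(G_1)$, a union of three translated closed half-planes in the lattice $2H^1(\Sigma)\cong\Z^2$, whose complement is the intersection of three translated open half-planes with normals those of $\bar H_1,\bar H_2,\bar H_3$; since these three directions were chosen so that even after translation this intersection is bounded, it is finite, so $D_\nu(\hat F^i)$ ranges over all but finitely many elements of $2H^1(\Sigma)$.

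\emph{The main obstacle} is the elementary but essential choice in Step 1: two opposite directions would give only two half-planes whose union omits an infinite slab, so a third, transverse direction is needed, positioned so that the three complementary open half-planes still meet in a bounded region after the a priori uncontrolled translations by $D_\nu(G_j)$. Verifying this little convex-geometry/lattice fact, and confirming that a simple foliation genuinely supplies the transverse arcs demanded by Corollary~\ref{halfplane} (with the boundary case of classes parallel to $L_j$ peeled off to Corollary~\ref{Z}), are the only points needing care.
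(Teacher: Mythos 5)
Your proposal is correct and takes essentially the same route as the paper: isotope $\Sigma$ to a transverse torus with a simple foliation whose closed leaf direction can be prescribed (Theorem~\ref{main1}), apply Corollary~\ref{halfplane} for three suitably chosen directions, and observe that the union of the three resulting translated half-planes of realized $D_\nu$ values covers all but a bounded (hence finite) region, regardless of the unknown translations $D_\nu(G_j)-D_\nu(G_1)$. The only difference is that you spell out the convex-geometry bookkeeping (choosing $\ell_1+\ell_2+\ell_3=0$ so the three complementary open half-planes have empty or triangular intersection) that the paper compresses into the phrase ``fills the complement of a triangular region.''
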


\begin{proof}
By Theorem~\ref{main1} and surrounding text, $\Sigma$ is $C^0$-small isotopic to a transverse torus with simple characteristic foliation, and with a closed leaf realizing any preassigned primitive homology class. This satisfies the hypotheses of Corollary~\ref{halfplane}. Define $F^1$, $F^2$ and $F^3$ by choosing three homology classes so that applying Corollary~\ref{halfplane} to each fills the complement of a triangular region. (We can do this with no information about $D_\nu(\hat F^2)$ or $D_\nu(\hat F^3)$, which translate the half-planes an unknown amount relative to $f^1$.)
\end{proof}

We can now prove that every torus $\Sigma$ embedded with trivial normal bundle in an Engel manifold is isotopic to infinitely many transverse tori as in Theorem~\ref{many}. Some care is required since there are such tori for which $\Delta_\nu$ is well-defined in $H^1(\Sigma)$ that have homologically nontrivial self-isotopies with nonzero $D_\nu$ (Example~\ref{PV}(d)). However, the divisibility of $\Delta_\nu$ is an isotopy invariant.

\begin{proof}[Proof of Theorem~\ref{many}]
For Part (b), we are given that $[\Sigma]=0$. Theorem~\ref{cofinite} realizes values $\Delta_\nu(f^i)=\D_\nu(\hat F^i)+\Delta_\nu(f^1)$ with arbitrarily large divisibilities in $A^*$. (These can be chosen in the same half-plane so that the embeddings are transversely homotopic.) By Corollary~\ref{wellDef}, the resulting images are all distinct under transverse isotopy and reparametrization. For Part (a), we allow $[\Sigma]$ to be nonzero, so that $\Delta_\nu$ may no longer be defined. Isotope $\Sigma$ to eliminate any $\W$-tangencies (Proposition~\ref{noW}). Then $\Sigma$ canonically embeds in a germ of a contact manifold diffeomorphic to $\Sigma\times\R$. This smoothly embeds in $\R^3$, and its contact structure extends (typically overtwisted) over $\R^3$ by Eliashberg~\cite{E}. Then a neighborhood $U$ of $\Sigma\subset M$ embeds in the corresponding prolongation $P\approx S^1\times\R^3$, preserving the Engel structure. But $H_2(P)=0$, so the previous case ($C^0$-small) shows we can find the required tori in $U$. These cannot be related by transverse isotopies in $P$, so also not in $U$.
\end{proof}

We conclude the section with explicit computations for a family of examples.

\begin{example}\label{PV}
a) Kegel \cite{K} distinguishes infinite families of transverse tori that are isotopic but not transversely isotopic, using an invariant determined by $\Delta_\nu$. These families were first constructed by del~Pino and Vogel \cite{PV} from an $\E$-transverse {\em core} circle $C$ in an Engel manifold $M$ and a transverse knot ({\em profile}) $K$ in $\R^3$. To describe these results from our perspective, locally model the core $C$ by the $z$-axis in $N=(\R^3,dz+xdy)$ mod unit $z$ translation, lifted to the prolongation $\pr N$ so that $C$ projects to 0 in $wxy$-space. (This can be arranged by thickening $C$ to a contact 3-manifold $N_C$ in $M$, then applying Proposition~\ref{annuli} to an annulus in $N_C$ obtained by flowing $C$ along $\L$, so that $\L|C$ maps parallel to the positively oriented $x$-axis in $N$.) The profile $K$ can be exhibited as a transverse knot in a small neighborhood of 0 in $wxy$-space with contact form $\beta=\sin(w) dx+\cos(w) dy$ from Section~\ref{Prolong}. Let $\Sigma$ be its preimage in the model from projecting out $z$. If we instead project out $w$, its image $\p(\Sigma)$  in $N$ is a $z$-invariant immersed torus whose constant-$z$ cross sections are the front projection of $K$. Since homotopic $\E$-transverse circles $C$ in $M$ are transversely isotopic \cite[Lemma~2.10]{PV}, the dependence of $\Sigma$ on $C$ is limited. However, when $M$ is simply connected, $M-\Sigma$ has the same fundamental group as $\R^3-K$, so varying the knot type of $K$ yields many isotopy classes. Each constant-$z$ slice of $\Sigma$ is $\beta$-transverse by construction, and the tangent vectors parallel to the $z$-axis lie in $\ker\beta$, so we have bases everywhere on which $\beta\wedge\alpha$ is positive, implying $\Sigma$ is transverse (since $\D=\ker\alpha\cap\ker\beta$).  Since the characteristic foliation on $\Sigma$ is never parallel to the $z$-axis, $\Delta_T=0$ (Proposition~\ref{comp}(b)). To compute $\Delta_\nu$, fix a positive basis $(\mu,\lambda)$ for $H_1(\Sigma)$, where $\mu$ is a constant-$z$ copy of $K$ (positively oriented by $\beta$) and $\lambda$ is parallel to the positively oriented $z$-axis. A Seifert surface for $K$ in $\R^3$ pulls back to a $z$-invariant Seifert solid for $\Sigma$. Its outward normal in $M$ agrees along $\lambda$ with the normal to $\p(\Sigma)$ in $N$, so $\langle\Delta_\nu,\lambda\rangle=0$ (Proposition~\ref{comp}(a)). Along $\mu$, the Engel normal framing is the blackboard framing of $K$ (given by $\W$ in $M$), which we measure relative to the Seifert framing. Thus, $\langle\Delta_\nu,\mu\rangle$ is the self-linking number $l(K)$ (Section~\ref{Knots}), and
$$\Delta_\nu=-l(K)\PD(\lambda)$$
(since $\lambda\cdot\mu=-1$). From Section~\ref{DeltaNu}, $\Delta_\nu$ is well-defined in $H^1(\Sigma)$ when $C$ pairs trivially with $H_3(M)$, and on $A=\langle\mu\rangle$ otherwise. Its divisibility $|l(K)|$ then distinguishes infinitely many transverse isotopy classes within the isotopy class determined by a given topological knot type of $K$ and fixed $C$. Varying the knot type of $K$, these are all transversely homotopic for a given $C$ (by the corresponding statement for transverse knots in $\R^3$). Note that changing $\Sigma$ by stabilizing $K$ is a special case of the procedure of Lemma~\ref{dnu}.

\item[b)]  We can now explicitly find even more classes realized as $\Delta_\nu$ of tori isotopic and transversely homotopic to a given $\Sigma$ as above. We arrange our transverse knot $K$ so that the total signed area enclosed by its front projection vanishes. (At the two points of extremal $x$ in the above front projection, the tangent vectors cannot point directly downward, so they point upward and contribute area with opposite sign. After rescaling $K$ sufficiently small in its required neighborhood, we can push one of these points outward to cancel the rest of the area.) Then the characteristic foliation of $\Sigma$ consists of circles representing $\mu$ in homology (since projecting $z$ out of $N$ is $\alpha$-Lagrangian, Section~\ref{Proj}). Applying Corollary~\ref{halfplane} realizes all classes of the form
$$\Delta_\nu=2m\PD(\mu)+(2n-l(K))\PD(\lambda)$$ with $m,n\in\Z$ and $n\ge0$.
The inequality can be weakened to $n\ge-\epsilon|m|$ for some small positive $\epsilon$ by perturbing $K$ to have front projection with nonzero signed area of either sign. (Then we can arrange closed leaves with any rational slope close to that of $\mu$.) The method of Theorem~\ref{cofinite} realizes all but finitely many classes congruent to $\PD(\lambda)$ mod 2 by adding another half-plane. However, we have no obvious bound on the number of classes missed, and it isn't clear if the tori realizing the new half-plane are transversely homotopic to $\Sigma$.

\item[c)] Now we specialize to the case $M=M_r\approx\R^4$ of Observation~\ref{tight}. Then in the smooth category, $C$ is unknotted and has only two possible framings in $\R^4$. The knotted tori we realize are precisely the turned spun tori. (The framing is ``turned'' since $l(C)$ is always odd.) Every $M_r$ contains a neighborhood in $\pr(\R^3,dz+xdy)$ of the section $w=0$, and $C$ is vertically transversely isotopic in $M_r$ to a transverse knot in this $\R^3$ section. We can draw this knot by its front projection in the $yz$-plane. The untwisted framing of $C$ in our 3-dimensional model $N$ in (a) (given by both $\L|C$ and $\lambda$) becomes its blackboard framing (parallel to the $x$-axis) when viewed in the given section of $M_r$. Then $\Sigma$ is pictured as an immersed torus following $C$ using this framing, with each slice normal to $C$ given by the front projection of $K$. The above computations apply with $\lambda$ the blackboard longitude, which is canonical in this setting: We can change any crossing of $C$ with at least one upward strand (in ``x'' position) by a transverse isotopy bypassing the crossing in the fourth coordinate. This changes $l(C)$ and the 0-longitude of $C$, but preserves the blackboard longitude (and exhibits the equivalence of all $\E$-transverse knots $C$ in $\R^4$ as mentioned in (a).) When $K$ is an unknot, the mod 2 residue of $\Delta_\nu$ is dual to a $(1,1)$-curve relative to the 0-longitude of $C$, as required by Example~\ref{R4mod2}, since transverse knots in $\R^3$ have odd self-linking.

\item[d)] Specializing further to an unknotted torus $\Sigma=S^1\times S^1\subset\R^2\times\R^2=M_r$ reveals other novel phenomena. There is an isotopy $F$ sending $\Sigma$ to a transverse torus $\Sigma_K$ as in (c) with $K$ and $C$ given by transverse unknots. We choose $C$ to be given by a front projection with just one crossing. Then $l(C)=-1$, so $\lambda$ on $\Sigma_K$ follows the $(-1)$-framing of $C$ (relative to the 0-longitude). Thus, there is a simple choice of $F$ for which $\lambda$ corresponds to the difference of the obvious basis elements, the two $S^1$-factors of $\Sigma$. By Montesinos \cite[Theorem~5.4]{M}, every element of $\SL(2,\Z)$ fixing this difference mod 2 is realized by a self-isotopy of $\Sigma$. (For example, if we write $\Sigma$ as the boundary of $S^1\times D^2$, an even Dehn twist of the latter is isotopic to the identity in $\R^4$ since $\pi_1(\SO(3))\cong\Z_2$.) Since $\SL(2,\Z)$ acts transitively on primitive classes, the self-isotopies act transitively on primitive classes with odd coefficients. Equivalently, they act transitively on primitive classes in $H^1(\Sigma)$ with odd coefficients in the Poincar\' e (or algebraic) dual basis (i.e.~primitive classes reducing mod 2 to $\ds$ from Section~\ref{ds}). Thus, we can realize any primitive, odd class as $f_1^*\PD(\lambda)$ by preceding $F$ by a self-isotopy. But $l(K)$ can range over all negative odd integers, so by (a), every positive odd multiple of $PD(\lambda)$ can be realized as $\Delta_\nu(\Sigma_K)$. Hence, every class with odd coefficients can be realized as $f_1^*\Delta_\nu(\Sigma_K)$ for some $K$ and $F$. Equivalently, every even class is realized as $D_\nu(\hat F)$ for isotopies $\hat F$ from one such unknotted transverse torus to the others. Since an arbitrary Engel manifold $M$ contains every bounded region of some $M_r$ as above (Observation~\ref{tight}), and any unknotted torus in $M$ is (by definition) isotopic to the resulting embedded $\Sigma$, we have:

\begin{prop}\label{unknottedTori}
Every unknotted torus $\Sigma'$ in an Engel manifold $M$ is isotopic to transverse tori with $\Delta_T=0$ but $\Delta_\nu$ realizing every class in $H^2(\Sigma')$ that reduces to $\ds$ mod 2. \qed
\end{prop}

\noindent Alternatively, suppose $\Sigma''$ is an unknotted transverse torus in $M$ with $\Delta_T=0$, and $\Delta\in H^2(\Sigma'')$ is any class reducing to $\ds$ mod 2 and with the same divisibility as $\Delta_\nu(\Sigma'')$. Then the previous reasoning gives an isotopy of $\Sigma''$ to itself, via the above $\Sigma$, with $f_1^*\Delta_\nu(\Sigma'')=\Delta$. Since the constant isotopy of $\Sigma''$ is clearly transverse, we conclude:
\begin{prop} For unknotted transverse tori with $\Delta_T=0$ in any Engel manifold, the only transverse isotopy invariant that is absolute (independent of choice of isotopy) and can be extracted from $\Delta_\nu$ is its divisibility. \qed
\end{prop}
\noindent We return to unknotted tori in Remark~\ref{overtwistedR3}.
\end{example}

\subsection{Varying $\Delta_T$}\label{DT}

We now present a method for varying $\Delta_T$. This is especially useful in prolongations, where we exhibit isotopic families of transverse tori realizing all classes in $H^1(\Sigma)$ as $\Delta_T$ (Example~\ref{T3}). When $\Delta_\nu$ is well-defined in $H^1(\Sigma)$, we find families realizing all linearly dependent pairs with even sum as $(\Delta_T,\Delta_\nu+\delta)$ for some fixed class $\delta$ (Example~\ref{vert}(c)). Families without such a relation seem harder to construct (cf.~Question~\ref{ques}(b)), although Section~\ref{Overtwisted} realizes all pairs with the correct mod 2 residue in overtwisted Engel manifolds by varying the Engel structure. Our main lemma is the following:

\begin{lem}\label{dt}
Suppose $\Sigma$ is an $\E$-transverse Legendrian torus (Definition~\ref{Leg}) in an Engel manifold $M$, and its characteristic foliation has a nondegenerate closed leaf $L$ along which $\Sigma$ has no $\W$-tangencies. Then there are $C^0$-small isotopies from $\Sigma$ to a family of transverse tori realizing all multiples of $\PD[L]$ as $\Delta_T$. For each such $\Delta_T$, there is a $C^0$-small transversely homotopic subfamily realizing all multiples of $\PD[L]$ congruent mod 2 to $D_T(F)$ as $D_\nu(F)$,  where $F$ is the induced isotopy from the transverse pushoff $\tau\Sigma$.
\end{lem}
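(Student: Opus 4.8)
The plan is to build on Scholium~\ref{Reeb}, which is the engine for creating Reeb components along a nondegenerate closed leaf of a Legendrian torus. First I would iterate the scholium $k$ times along (parallel copies of) $L$, producing an $\E$-transverse Legendrian torus $\Sigma'$ whose characteristic foliation now has $L$ replaced by a string of nondegenerate closed leaves separated by Reeb foliations, with all the new Reeb components having the same direction of convexity. By the discussion preceding Section~\ref{Overtwisted}, $\Delta_T$ of a transverse torus is a multiple of $\PD[\gamma]$ for a dividing/closed leaf $\gamma$, with multiplicity equal to half the signed count of Reeb components; since each application of the scholium inserts two Reeb components of the same sign, taking transverse pushoffs after $k$ applications yields $\Delta_T=\pm k\,\PD[L]$ (and reversing the direction of the turn-around in the scholium, equivalently choosing the opposite direction of folding in Figure~\ref{fold}, gives the opposite sign). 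Since $k$ is arbitrary and the modification is supported near $L$, this realizes all multiples of $\PD[L]$ as $\Delta_T$, via $C^0$-small isotopies (the isotopy of Scholium~\ref{Reeb} is $C^0$-small, as is the transverse pushoff of Proposition~\ref{push}).

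For the second assertion, I would fix the value of $\Delta_T$ obtained as above and then exploit the closed leaves that the scholium has created: after the modification, the foliation contains a nondegenerate closed leaf parallel to $L$, and a parallel pushoff $C$ of such a leaf is a circle positively transverse to the characteristic foliation with $[C]=[L]$. Applying Lemma~\ref{dnu} (or Corollary~\ref{Z}) to $C$ on the transverse pushoff $\tau\Sigma$ produces, for each $n\ge0$, a $C^0$-small isotopy $F$ and a $C^0$-small transverse homotopy $H$ with $D_\nu(F)=2n\,\PD[C]=2n\,\PD[L]$ and $D_T(F)=0$, all transversely homotopic to one another and to $\tau\Sigma'$. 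To get both signs of the multiple of $\PD[L]$, push $C$ off to the other side of the closed leaf so that it acquires the opposite orientation as a positively transverse curve (as in the proof of Corollary~\ref{Z}); this changes the sign of $D_\nu(F)$. Composing with the isotopy realizing the chosen $\Delta_T$, the induced isotopy $F$ from $\tau\Sigma$ has the prescribed $\Delta_T=D_T(F)$ and $D_\nu(F)$ ranging over all multiples of $\PD[L]$; the mod~2 congruence $D_T(F)\equiv D_\nu(F)$ is automatic from Theorem~\ref{formal} (equivalently, from the spin-structure argument in Section~\ref{Invariants}), which constrains exactly which residue class of multiples of $\PD[L]$ can appear for a given $\Delta_T$ and shows that all of that residue class is in fact hit.

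The main obstacle I anticipate is bookkeeping the mod~2 constraint correctly: one must check that a single application of Scholium~\ref{Reeb} shifts $D_T$ by an \emph{odd} multiple of $\PD[L]$ (so that parity of $D_T$ is controlled by the number of scholium applications), and independently that Lemma~\ref{dnu} only ever changes $D_\nu$ by an \emph{even} multiple of $\PD[L]$, so that the two achievable residue classes match as forced by $D_T+D_\nu\equiv 0 \bmod 2$. Concretely, I would verify the parity of the $\Delta_T$-contribution of one scholium move by computing the multiplicity directly from the two new Reeb components (half of a signed count of $\pm2$, i.e.\ $\pm1$, an odd shift), and then observe that since $2n\,\PD[L]$ is always even, for fixed $\Delta_T=D_T(F)$ the attainable values of $D_\nu(F)$ are precisely the multiples of $\PD[L]$ in the residue class of $D_T(F)$ mod~$2\,\PD[L]$ — which is exactly the statement. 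A secondary point to be careful about is that the Reeb regions created by the scholium carry circles of $\W$-tangencies, so one should confirm (as in Remark~\ref{immersion}) that after transverse pushoff these disappear and the resulting torus is genuinely transverse with the claimed foliation, and that the closed leaf used as the core of the stabilization in Lemma~\ref{dnu} survives with no $\W$-tangencies along it — which it does, by the last sentence of Scholium~\ref{Reeb}.
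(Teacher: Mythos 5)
Your overall strategy---iterate Scholium~\ref{Reeb} along $L$ to insert Reeb components, pass to the transverse pushoff to read off $\Delta_T$ from the Reeb count, then stabilize along a parallel closed leaf via Corollary~\ref{Z} to adjust $D_\nu$---is the same as the paper's. But there is a real gap in your treatment of $D_\nu$. You correctly compute that the stabilization step contributes $2n\PD[L]$ to $D_\nu$, and you invoke the mod~2 congruence $D_T+D_\nu\equiv 0$ from Theorem~\ref{formal} to pin down the residue class. However, that congruence lives in $H^1(\Sigma;\Z_2)$ and only tells you that $D_\nu-D_T$ is even; it does \emph{not} tell you that the scholium-induced isotopy contributes a $\Z$-multiple of $\PD[L]$ to $D_\nu$. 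If that contribution were, say, $\PD[L]+2\PD[C']$ for some independent class $[C']$, your total $D_\nu(F)$ would never be a multiple of $\PD[L]$, and the lemma's conclusion would fail. The paper closes this gap by arranging the scholium isotopy to fix a neighborhood of a circle $C^\parallel$ parallel to $L$; since $\tau\Sigma'$ then agrees with $\tau\Sigma$ along $C^\parallel$, the invariant $D_\nu(F)$ pairs to zero with $[L]=[C^\parallel]$, which forces $D_\nu(F)\in\Z\,\PD[L]$. Your proposal needs an argument of this kind before the stabilization step can produce the full residue class of multiples.

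A smaller issue: to realize the opposite sign of the $\Delta_T$-coefficient you propose ``reversing the direction of the turn-around in the scholium.'' Since a half-turn rotation reaches the same terminal position regardless of which way you rotate, it is not clear this reverses the convexity of the resulting Reeb components. The paper instead precedes the scholium by a separate fold along $L$, so that the central leaf is reversed before the scholium is applied, and then the scholium produces Reeb components convex in the opposite direction. You should either verify that your reversal really changes the sign or adopt the fold-first route. Your checks on the $\W$-tangencies (that they live only in the Reeb annuli of the Legendrian torus, vanish after transverse pushoff, and in particular do not meet the closed leaves used for stabilization) are correct and worth keeping.
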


\noindent These hypotheses are more restrictive than those of Lemma~\ref{dnu}. For example, the Legendrian tori of Theorem~\ref{main1} (made from arbitrary tori to prove Theorem~\ref{main0} by transverse pushoff) are constructed with circles of $\W$-tangencies that intersect each leaf. The transverse tori in Example~\ref{PV}(a) can be realized as transverse pushoffs of Legendrian tori by taking the profile $K$ to be the pushoff of a Legendrian knot, but the cusps of the latter generate $\W$-tangencies intersecting the leaves. Nevertheless, the lemma suffices to prove Theorem~\ref{nonhom} and generate other interesting examples.

\begin{proof}
First, we isotope $\Sigma$ to a new $\E$-transverse Legendrian torus  $\Sigma'$ with Reeb components inserted into its characteristic foliation parallel to $L$: By repeatedly applying Scholium~\ref{Reeb} to $\Sigma$, we can split $L$ into a collection of parallel closed leaves separated by any even number of Reeb components, all convex in the same direction (as in Figure~\ref{ReebFig}). To realize convexity in the opposite direction, precede this construction by folding along $L$ (cf.\ proof of Scholium~\ref{Reeb}). This splits $L$ into three closed leaves but only changes the characteristic line field by a small perturbation.  Thus, the torus remains Legendrian if we suitably perturb it in the $\W$-direction. Now the new central leaf attracts in the opposite direction from $L$ (with time reversed), and the previous construction gives Reeb components with the opposite convexity. We can assume the isotopy preserves $L$ and fixes a neighborhood of some circle $C^\parallel$ parallel to $L$.

The transverse pushoff $\tau\Sigma'$ (Proposition~\ref{push}) inherits the nondegenerate closed leaf $L$ and the new Reeb components from $\Sigma'$, and it agrees with $\tau\Sigma$ near $C^\parallel$. As in Proposition~\ref{comp}, $\Delta_T(\tau\Sigma')$ must be a multiple of $\PD[L]$, and any coefficient can be realized by controlling the Reeb components. Since $\tau\Sigma'$ agrees with $\tau\Sigma$ near $C^\parallel$, $D_\nu(F)$ vanishes on $[L]$, so is a multiple of $\PD[L]$. By Corollary~\ref{Z}, this can be any multiple with $D_T(F)+D_\nu(F)$ even, and the resulting transverse tori are $C^0$-small transversely homotopic.
\end{proof}

\begin{cor}
Let $\Sigma$ be a torus with a tight neighborhood in a contact 3-manifold $N$. Then $\Sigma$ has a lift to the prolongation $\pr N$ that is isotopic to transverse tori representing infinitely many transverse homotopy classes.
\end{cor}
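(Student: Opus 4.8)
The plan is to realize the required transverse tori as transverse pushoffs of a single $\E$-transverse Legendrian lift of (an isotopic copy of) $\Sigma$, and then to apply Lemma~\ref{dt} together with the fact that the divisibility of $\Delta_T$ is a transverse homotopy invariant. First I would put $\Sigma$ in good position in $N$: since it has a tight neighborhood, the Giroux Criterion (Section~\ref{Convex}) shows that after a $C^\infty$-small isotopy we may take $\Sigma$ convex with dividing set a nonzero even number of parallel essential circles and no Reeb components, and the Flexibility Theorem~\ref{flex} then lets us replace the characteristic foliation by a simple one (Definition~\ref{stdForm}). The isotoped torus — still a torus with a tight neighborhood — now carries a nondegenerate closed leaf $L$ with $[L]$ primitive in $H_1(\Sigma)$.

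Next I would form the $\E$-transverse Legendrian lift $\Sigma^\ast\subset\pr N$ as in Example~\ref{liftReeb}: the oriented characteristic line field of $\Sigma$, taken to be the line field $\L=\D/\W$, defines a section of the restricted circle bundle $\p^{-1}(\Sigma)\to\Sigma$, and $\Sigma^\ast$ is its image. Since $\Sigma$ is embedded, so is $\Sigma^\ast$; and since $\Sigma^\ast$ is the image of a section, it is transverse to the fibers of $\p$, hence to $\W$, so it has no $\W$-tangencies anywhere. Moreover $\p$ restricts to a diffeomorphism of $\Sigma^\ast$ onto $\Sigma$ carrying the $\E$-characteristic foliation of $\Sigma^\ast$ to the $\xi$-characteristic foliation of $\Sigma$ (Section~\ref{Engel}), so $\Sigma^\ast$ inherits the nondegenerate closed leaf $L$, along which it has no $\W$-tangencies. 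Thus $\Sigma^\ast$ satisfies the hypotheses of Lemma~\ref{dt}.

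Applying Lemma~\ref{dt}, there are $C^0$-small isotopies from $\Sigma^\ast$ — more precisely, from its transverse pushoff $\tau\Sigma^\ast$ (Proposition~\ref{push}) — to a family of transverse tori realizing every integer multiple of $\PD[L]$ as $\Delta_T$. These tori are all smoothly isotopic to $\tau\Sigma^\ast$, hence to one another, and $\tau\Sigma^\ast$ is an embedded transverse torus whose image under $\p$ is isotopic to $\Sigma$, so it serves as the desired lift. Because $[L]$ is primitive, the classes $k\,\PD[L]$ for $k\in\Z^+$ have pairwise distinct divisibilities, and since the divisibility of $\Delta_T$ is a transverse homotopy invariant (Section~\ref{Invariants}), the resulting tori fall into infinitely many distinct transverse homotopy classes, as claimed.

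The only point requiring genuine care is the verification that the Legendrian lift $\Sigma^\ast$ meets every hypothesis of Lemma~\ref{dt} — that it is $\E$-transverse (automatic, since the characteristic line field of the simply foliated $\Sigma$ is nonsingular), Legendrian, embedded, and free of $\W$-tangencies along $L$ (in fact everywhere, being the image of a section). Everything after that is bookkeeping with the invariants set up in Section~\ref{Invariants}.
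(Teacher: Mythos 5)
Your proof is correct and follows essentially the same route as the paper: make $\Sigma$ convex with a nonsingular (simple) foliation and nondegenerate closed leaf, take the $\E$-transverse Legendrian lift from Example~\ref{liftReeb}, apply Lemma~\ref{dt}, and separate the resulting transverse homotopy classes via the divisibility of $\Delta_T$. The only addition over the paper's terse version is your explicit verification that the Legendrian lift, being the image of a section, is everywhere $\W$-transverse, which is a worthwhile check and entirely correct.
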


\begin{proof}
We can assume (for example via convexity and the Flexibility Theorem~\ref{flex}) that the characteristic foliation on $\Sigma$ is nonsingular and has a nondegenerate closed leaf. Then the above lemma applies to the Legendrian lift of $\Sigma$ from Example~\ref{liftReeb}. (That example produces infinitely many homotopy classes of such lifts if we vary Reeb components before applying the lemma; cf.\ Remark~\ref{immersion}.) The resulting transverse homotopy classes are distinguished by the multiplicity of $\Delta_T$.
\end{proof}

\begin{example}\label{T3}
The standard tight contact 3-torus $T^3$ is given by $(\R^3/\Z^3,\sin(2\pi x)dy+\cos(2\pi x)dz)$. Let $\Sigma_c\subset T^3$ be the 2-torus obtained by setting $x$ equal to a constant $c$. Then the characteristic line field on $\Sigma_c$ is constant, but realizes all slopes as we vary $c$ (an isotopy). Perturbing $\Sigma_c$ shows that $\Sigma_0$ is isotopic to a convex torus with a closed leaf realizing any preassigned primitive class in $H_1(\Sigma_0)$. Adding Reeb components and taking Legendrian lifts to $\pr T^3=T^4$ realizes (up to isotopy) all lifts of $\Sigma_0$ (which are classified by $H^1(\Sigma_0)=\Z\oplus\Z$). The conclusion of the corollary applies to each of these. For the lift without Reeb components, Lemma~\ref{dt} realizes all elements of $H^1(\Sigma_0)$ as $\Delta_T$. This class (not just its multiplicity) is an invariant of transverse homotopy since every self-homotopy of a lift in $T^4$ is the identity on $H^1$. One can make similar examples where this last statement fails by replacing $T^3$ by other $T^2$-bundles over $S^1$. More generally, one can vary $\Delta_T$ in this manner whenever we have a half-unit of {\em Giroux torsion}, namely an embedding of the subset of $T^3$ given above by the restriction $0\le x\le 1/2$. (Many such examples are known.)
\end{example}

We can sometimes control transverse homotopy classes and transverse isotopy classes simultaneously. We implicitly restrict the former to a single isotopy class. (Families that are transversely homotopic but not isotopic arise in Example~\ref{PV}.)

\begin{thm}\label{prolong}
Let $\Sigma=\partial N_0\subset N$ be a torus bounding a compact submanifold $N_0$ of a 3-manifold, and let $\xi^*$ be a plane field on $N$ with $e(\xi^*)|N_0=0$. Then there is a contact structure $\xi$ homotopic to $\xi^*$ and a lift of $\Sigma$ to the prolongation $\pr(N,\xi)$ whose isotopy class contains an infinite collection of transverse homotopy classes of transverse tori, for which each class contains infinitely many transverse isotopy classes.
\end{thm}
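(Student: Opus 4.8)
The plan is to reduce to the contact setting, lift $\Sigma$ nullhomologously to a prolongation, and then use Lemma~\ref{dt} to spread the resulting smooth isotopy class across infinitely many transverse homotopy classes and Lemma~\ref{dnu} (with Corollary~\ref{Z}) to populate each one with infinitely many transverse isotopy classes. First I would homotope $\xi^*$ to a contact structure by the contact h-principle (\cite{E} when $N$ is closed, \cite{Gr} when $N$ is open), keeping $e(\xi)|N_0=e(\xi^*)|N_0=0$. Since $e(\xi^*)|N_0=0$, I can then graft a half-unit of Giroux torsion into a collar $\Sigma\times[-1,1]$ of $\Sigma$: insert the model of Example~\ref{T3} (contact form $\sin(2\pi t)\theta_1+\cos(2\pi t)\theta_2$ for closed one-forms $\theta_1,\theta_2$ spanning $H^1(\Sigma;\R)$) on a sub-collar and interpolate on the rest. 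The primary obstruction is unchanged (the Euler class on cross-section tori is still $0$), so a further plane-field modification in a ball disjoint from the collar restores the homotopy class of $\xi^*$; straightening the result to a contact structure rel the torsion collar, by the relative form of \cite{E} (resp.\ \cite{Gr}), gives a contact $\xi\simeq\xi^*$ in which some cross-section torus $\Sigma'\simeq\Sigma$ is convex with two parallel essential dividing circles, hence a tight neighborhood, and carries a nonsingular characteristic foliation with a nondegenerate closed leaf of any preassigned primitive slope. Finally, $e(\xi)|N_0=0$ makes the circle bundle $\p\colon\pr(N,\xi)\to N$ trivial over $N_0$; fixing a section $s_0\colon N_0\to\pr(N,\xi)$ gives a nullhomologous torus $s_0(\partial N_0)$ with Seifert solid $s_0(N_0)$.

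Next I would choose the lift and vary $\Delta_T$. By Example~\ref{liftReeb}, modifying the characteristic foliation of $\Sigma'$ changes the homotopy class of its $\E$-transverse Legendrian lift by the Poincar\'e dual of a closed leaf; since closed leaves of all primitive slopes occur and their duals generate $H^1(\Sigma)$, every class is realized by some lift. I pick $\Sigma'$ and the lift $\widetilde\Sigma\subset\pr(N,\xi)$ so that $\widetilde\Sigma$ is isotopic over $\Sigma$ to $s_0|_\Sigma$ — hence isotopic to $s_0(\partial N_0)$ in $\pr(N,\xi)$, so $[\widetilde\Sigma]=0$ with Seifert solid $s_0(N_0)$ — while keeping a nondegenerate closed leaf $L$ in its foliation. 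A section has no $\W$-tangencies, so Lemma~\ref{dt} applies to $\widetilde\Sigma$: there are $C^0$-small isotopies from the transverse pushoff $\tau\widetilde\Sigma$ to transverse tori $T_m$, $m\in\Z$, with $\Delta_T(T_m)=m\,\PD[L]$; dragging $s_0(N_0)$ along keeps $[T_m]=0$. As $[L]$ is primitive, $\PD[L]$ is primitive, so $\Delta_T(T_m)$ has divisibility $|m|$, and since this is a transverse homotopy invariant, the $T_m$ represent infinitely many transverse homotopy classes inside one smooth isotopy class.

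Then I would populate each of these classes. Fix $m$. The foliation of $T_m$ is nonsingular with nondegenerate closed leaves and with circles transverse to it (the closed leaves and Reeb-region boundaries produced by Lemma~\ref{dt} and Scholium~\ref{Reeb}). Applying Lemma~\ref{dnu} to $T_m$ along a transverse circle $C$ — and Corollary~\ref{Z} along a pushoff of $L$ — produces, for each $n\ge0$, a transverse embedding that is simultaneously $C^0$-small isotopic to $T_m$ (via $F$) and $C^0$-small transversely homotopic to $T_m$ (via $H$), with $D_T=0$, so that $\Delta_T$ stays $m\,\PD[L]$ and the transverse homotopy class is unchanged, while $\Delta_\nu$ changes by $D_\nu(F)=2n\,\PD[C]$. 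Iterating along finitely many transverse circles and pushoffs of $L$ realizes $\Delta_\nu(T_m)+2\eta$ for every $\eta\in H^1(\Sigma)$ with $\langle\eta,[L]\rangle\ge0$; a routine congruence argument shows this half-plane of classes, translated by the fixed class $\Delta_\nu(T_m)$, contains elements of arbitrarily large divisibility. By Corollary~\ref{wellDef} the divisibility of $\Delta_\nu$ is a transverse isotopy invariant independent of parametrization, so these tori form infinitely many transverse isotopy classes, all inside the transverse homotopy class of $T_m$. Combined with the previous paragraph, this gives the theorem.

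The main obstacle is Step~1: promoting an arbitrary plane field to a \emph{contact} structure that additionally carries a tight unit of Giroux torsion in a prescribed collar, while staying in the given homotopy class — this requires the relative contact h-principle together with obstruction-theoretic bookkeeping for the grafting (agreement on the $2$-skeleton and correcting the $\pi_3(S^2)$-invariant in a ball). A secondary point is arranging the Legendrian lift to be nullhomologous while retaining a nondegenerate closed leaf. It is worth emphasizing that, in contrast with one's first guess, no control over the ``self-linking of $L$'' (the value $\langle\Delta_\nu(\tau\widetilde\Sigma),[L]\rangle$, which by analogy with Example~\ref{PV}(a) need not vanish) is needed: the $\Delta_\nu$-variation in Step~3 sweeps out a whole half-plane of even classes, not merely multiples of $\PD[L]$, and that is enough to force unbounded divisibility.
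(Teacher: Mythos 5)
Your overall strategy (contactify $\xi^*$, build a nullhomologous Legendrian lift, apply Lemma~\ref{dt} to vary $\Delta_T$, then vary $\Delta_\nu$) is the same skeleton as the paper, but the crucial last step contains a genuine gap, and it is precisely the step you flag as dispensable.

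You claim that within each transverse homotopy class $T_m$ ($m\ne 0$), the $\Delta_\nu$ values can be swept over a whole half-plane of even classes in $H^1(\Sigma)$ by applying Lemma~\ref{dnu}/Corollary~\ref{halfplane} to transverse circles, and conclude that ``no control over the `self-linking of $L$' is needed.'' This is false once $m\ne 0$. If $C$ is any circle transverse to the characteristic foliation, then the foliation's line field never coincides with $TC$, so its winding relative to $TC$ is zero; since the canonical framing of $T\Sigma$ also has zero winding relative to $TC$ for any embedded essential circle, we get $\langle\Delta_T(T_m),[C]\rangle=0$. With $\Delta_T(T_m)=m\,\PD[L]$ and $m\ne0$, this forces $[L]\cdot[C]=0$, i.e.\ every transverse circle on $T_m$ is parallel to $L$. (The same constraint rules out the transverse arc needed for Corollary~\ref{halfplane}, since its proof closes up such an arc into a transverse circle with $[L]\cdot[C]\ne0$.) So the only $\Delta_\nu$-variation available from Lemma~\ref{dnu} and Corollary~\ref{Z} is by multiples of $\PD[L]$ --- exactly what Lemma~\ref{dt} already gives you. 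Consequently the divisibility of $\Delta_\nu$ on the annihilator $A\subset H_1(\Sigma)$ is unbounded only if $\PD[L]$ restricts nontrivially to $A$; if $A$ has rank one and is spanned by $[L]$ (which can happen: $A$ always contains a class bounding in the Seifert solid, and $[L]$ might be that class), then $\Delta_\nu|_A$ is constant over the whole family and you distinguish nothing. The paper's proof is built around ruling out exactly this scenario: it chooses $L$ to be a specific circle $C$ and runs a relative-$w_2$/spin argument to show the characteristic line field winds an odd number of times around a class $[C_0]\in A$, so $\langle\PD[C],[C_0]\rangle\ne 0$. That nonvanishing, which your write-up explicitly declares unnecessary, is the load-bearing input; without it the theorem is not established.

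A secondary remark: your Step~1 (grafting a Giroux-torsion collar into $\xi^*$, then applying the relative h-principle) is more elaborate than what the paper does and is not obviously needed. The paper never invokes Lutz twist or Giroux torsion here; it simply homotopes $\xi^*$ near $\Sigma$ to a chosen local contact model with the desired convex foliation, then applies Eliashberg rel that neighborhood, and matches the Legendrian lift to $\partial\hat N_0$ by inserting Reeb components along one direction $C$. That construction simultaneously produces the circle $C$ for which the $w_2$ argument is then run, so the two steps are intertwined; decoupling them as you do is part of why the nonvanishing of $\PD[L]|_A$ slips through unaddressed.
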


\begin{proof}
First, we suitably define $\xi$ and the corresponding lift of $\Sigma$. Since $e(\xi^*)|N_0=0$, we can choose an oriented line field $\L^*$ in $\xi^*|N_0$. The resulting flag $\L^*\subset\xi^*|N_0\subset TN_0$ trivializes $TN_0$. To arrange a nonsingular characteristic foliation on $\Sigma$, let $v$ be a vector field in $T\Sigma$ determining its canonical framing. Since $\chi(N_0)=0$, $v$ has a nonvanishing extension to $TN_0$. Using the flag trivialization, we can interpret the extension as a map $N_0\to S^2$, whose restriction to $\Sigma$ must be homologically, hence homotopically, trivial. Thus, near $\Sigma$, we can homotope $\xi^*$ to be transverse to $v$. Then its characteristic line field on $\Sigma$ is nonsingular and determines its canonical framing. The angle of $\L^*$ in $\xi^*|\Sigma$ relative to $T\Sigma$ now gives a map $\Sigma\to S^1$. We can assume this vanishes near some essential circle $C$ in $\Sigma$ where $\L^*$ agrees with the oriented characteristic line field. After further homotopy of $\xi^*$, we can also assume its characteristic foliation (which determines the canonical framing on $T\Sigma$) contains $C$ as a leaf and agrees with some convex local model on $\Sigma$, and that $\xi^*$ agrees with the corresponding contact structure near $\Sigma$. By Eliashberg~\cite{E}, we can homotope $\xi^*$ elsewhere to obtain a (typically overtwisted) contact structure $\xi$ on $N$. These homotopies send $\L^*\subset\xi^*|N_0$ to a line field $\L$ in $\xi|N_0$. This $\L$ lifts $N_0$ to $\hat N_0\subset\pr(N,\xi)$. Then $\partial \hat N_0$ agrees with the Legendrian lift $\hat\Sigma$ of $\Sigma$ near $C$, but differs from it elsewhere. However, by isotoping $\Sigma$ in $N$ rel $C$, we can add Reeb components to its foliation as in Example~\ref{liftReeb} along a  closed leaf parallel to $C$, changing the homotopy class of the Legendrian lift $\hat\Sigma$ rel $C$ so that it becomes vertically isotopic to $\partial\hat N_0$. After isotoping $\hat N_0$, we can now assume the Legendrian lift $\hat\Sigma$ equals $\partial\hat N_0$. Equivalently, we homotope $\L$ on $N_0$ so that its restriction to $\Sigma$ agrees with our new characteristic line field.

To apply $\Delta_\nu$, we must see that $\PD[C]$ in $H^1(\Sigma)$ is nonzero on the annihilator $A$ of $H_3(\pr(N,\xi))$. Let $C_0\subset \Sigma=\partial N_0$ be an essential circle bounding a surface $\Sigma_0\subset N_0$, so $[C_0]\in A$. Let $\tau_\L$ and $\tau_\Sigma$ be the framings of $TN|C_0$ determined, respectively, by  the flag $\L|C_0\subset\xi|C_0\subset TN|C_0$ and by the tangents to $C_0\subset \Sigma\subset N$. Then $w_2(TN|\Sigma_0,\tau_\L)=0$ since the framing extends. But the normal to $C_0$ in $\Sigma$ extends normally over $\Sigma_0$, so $w_2(TN|\Sigma_0,\tau_\Sigma)=w_2(T\Sigma_0,TC_0)=\chi(\Sigma_0)|2\ne0$. Thus, the two framings $\tau_\L$ and $\tau_\Sigma$ differ by the nontrivial element of $\pi_1(\SO(3))$. Since $\L$ coincides with the characteristic line field of $\Sigma$, this means the latter has odd winding number as we traverse $C_0$. Equivalently, $\rho C\cdot C_0$ is odd, where $2\rho$ is the signed count of Reeb components of the foliation, so $\PD[C]$ has nonzero value on $[C_0]$.

To complete the proof, we apply Lemma~\ref{dt} to $\hat\Sigma$ with $L=C$. This realizes all multiples of $\PD[C]$ as $\Delta_T$ by $C^0$-small isotopies, and the multiplicities distinguish infinitely many transverse homotopy classes. Within each such class, the induced isotopies $F$ from $\tau\hat\Sigma$ realize all multiples of $\PD[C]$ congruent to $D_T(F)$ mod 2 as $D_\nu(F)$. The normal framings of $\tau\hat\Sigma$ induced by the Seifert solid $\hat N_0$ and $\W$ agree since $\hat N_0$ lifts $N_0$. Thus, $\Delta_\nu(\tau\hat\Sigma)=0$, where we work relative to $\hat N_0$, so $\Delta_\nu$ of each new transverse torus is a multiple of $\PD[C]$. These classes range over all multiples for which $\Delta_T+\Delta_\nu$ is odd. (Note that the sum has mod 2 reduction $\ds$ that cannot be 0, since it is isotopy invariant with $\Delta_\nu(\tau\hat\Sigma)=0$ and $\langle\Delta_T(\tau\hat\Sigma),C_0\rangle=\rho C\cdot C_0$ odd.) By Corollary~\ref{wellDef}, $\Delta_\nu$ is well-defined on $A$, independently of the choice of Seifert solid. Since $\PD[C]$ has nonzero value on $[C_0]\in A$, the resulting multiplicities distinguish infinitely many transverse isotopy classes within each transverse homotopy class.
\end{proof}

\begin{proof}[Proof of Theorem~\ref{nonhom}]
We are given a circle bundle over a 3-manifold $N$, lying in a plane bundle $\xi^*$ with $e(\xi^*)$ even. Then $\xi^*\oplus\R$ is trivial, so we can identify it with $TN$. The above theorem immediately applies to many choices of $\Sigma$. If $\Sigma$ is unknotted in an $\R^3\subset N$, then it bounds a solid torus $N_0\subset\R^3$ and $e(\xi^*)|N_0=0$ trivially. We can choose $\L^*$ to extend over the trivial bundle $\xi^*|\R^3$. Then all of $\R^3$ lifts by $\L$, so $\hat\Sigma$ is unknotted in $\pr(N,\xi)$. (The other choices of $\L^*$ realize all homotopy classes of lifts of $N_0$ by families as above, and we can just as easily lift solid tori following nontrivial loops in $N$.)
\end{proof}

\begin{Remark}\label{overtwistedR3}
By Proposition~\ref{unknottedTori} for $\Delta_T=0$ and its same method in general, we conclude that an unknotted torus in an Engel manifold as constructed above is isotopic to transverse tori realizing $(\Delta_T,\Delta_\nu)$ by all linearly dependent pairs with sum reducing mod 2 to $\ds$. This doesn't seem to extend to arbitrary Engel manifolds as in Proposition~\ref{unknottedTori}, since at minimum, we are using a large $w$-interval in the prolongation of an overtwisted contact structure on $\R^3$.
\end{Remark}

We again finish with explicit computations:

\begin{example}\label{vert}
a) Let $K$ be a transverse knot in a contact 3-manifold $(N,\xi)$, and let $\Sigma\subset\pr N$ be the torus obtained by restricting the circle bundle to $K$. This is $\E$-transverse Legendrian, although not generic since its characteristic line field is $\W|\Sigma$. The topological knot type of $\Sigma$ depends on that of $K$. For example, when $N$ is simply connected and $e(\xi)=0$, the homotopy type of $N-K$ (hence the knot group) is preserved via the universal cover of $(\pr N,\Sigma)$. The variable line field $\L|K$ in $\xi|K=\nu K$ rotates once (negatively) as we follow the characteristic foliation (by fibers of $\pr N$) around $\Sigma$ once. By the proof of Proposition~\ref{push}, we obtain the transverse pushoff $\tau\Sigma$ by pushing $K$ in the positive $\L$-direction, so $\tau\Sigma$ projects to the boundary of a tubular neighborhood $N_0$ of $K$ in $N$, with $\L$ directed outward. Thus, we can equivalently view $\tau\Sigma$ as the transverse lift of $\partial N_0$. (We have chosen to exhibit the boundary orientation on $\tau\Sigma$; the other orientation would correspond to a push in the negative $\L$ direction.) The transverse lift of $\partial N_0$ is not a lift as occurs in the previous proof since it does not extend over $N_0$. However, we can still analyze $\Delta_T$. We can assume $\partial N_0$ is foliated by helices (proof of Proposition~\ref{helices}), so $$\Delta_T(\tau\Sigma)=0$$ (Proposition~\ref{comp}(b)). After a perturbation of $\partial N_0$, we can assume there is a nondegenerate closed leaf $L$ representing any given primitive class $$[L]=p\mu-q\lambda$$ with $p,q\in\Z^+$ and $p/q$ sufficiently large relative to a given choice of longitude $\lambda$. (See Section~\ref{OrContSurf} for the sign of $L$.) Thus, applying Lemma~\ref{dt} to the Legendrian lift of $\partial N_0$ gives infinitely many transverse homotopy classes isotopic to $\tau\Sigma$, realizing all multiples of $\PD[L]$ as $\Delta_T$ for each such $L$.

\item[b)] Now suppose that $K$ is nullhomologous in $N$. The circle bundle $\pr N$ restricted to any Seifert surface of $K$ is a Seifert solid for $\Sigma$. This becomes a Seifert solid for $\tau\Sigma$ under the isotopy producing the latter from $\Sigma$. The invariant $\Delta_\nu(\tau\Sigma)$ compares the Engel framing (given by $\L$, which is outward normal to $\partial N_0$ when measuring its lift) against the outward normal to the boundary of the Seifert solid. These framings agree on the longitude $\lambda_0$ determined by the 0-framing of $K$. Thus, $\langle \Delta_\nu(\tau\Sigma),\lambda_0\rangle=0$. But around the meridian $\mu$, $\L$ rotates once positively, so $\langle \Delta_\nu(\tau\Sigma),\mu\rangle=1$. Since $\lambda_0\cdot\mu=-1$, we conclude that
$$\Delta_\nu(\tau\Sigma)=-\PD(\lambda_0).$$ Note that this only depends on the topological knot type of $K$. In particular, it is preserved under stabilization of $K$.  By Section~\ref{DeltaNu}, $\Delta_\nu$ is well-defined in $H^1(\tau\Sigma)$ if, for example, $N$ is open (so $H_3(N)=0$). It is well-defined in general if we restrict to Seifert solids made from the fibration as above.

Applying Lemma~\ref{dnu} to circles transverse to the leaves $L$ specified in (a) realizes infinitely many transversely homotopic tori, with $\Delta_T=0$ and $\Delta_\nu$ realizing all classes of the form 
$$\Delta_\nu=2m\PD(\mu)+(2n-1)\PD(\lambda_0)$$
with $n>\min(-\epsilon m,0)$ for some $\epsilon>0$ (cf.~Example~\ref{PV}(b)). For examples with $\Delta_T\ne0$, fix $L$ as in (a). Then Lemma~\ref{dt} gives transverse tori realizing
$$\Delta_T=m\PD[L] {\rm\ \ and\ \ } \Delta_\nu=n\PD[L]-\PD(\lambda_0)$$
for all $m,n\in\Z$ with $m+n$ even. These are transversely homotopic for each fixed value of $\Delta_T$.

\item[c)] Now we modify $\xi$ by a {\em Lutz twist} along $K$. This basically inserts a unit of Giroux torsion along $\partial N_0$, making $\xi$ overtwisted. Then we can realize all primitive classes as $[L]$ by isotopy of $\Sigma$. Thus, in (a) we can realize all classes in $H^1(\Sigma)$ as $\Delta_T$ (and realize transverse tori in all other homotopy classes of lifts) as in Example~\ref{T3}. In (b), we can realize all linearly dependent pairs in $H^1(\Sigma)$ with even sum as $$(\Delta_T,\Delta_\nu+\PD(\lambda_0)).$$
\end{example}


\begin{thebibliography}{MM}

\bibitem[BEM]{BEM}
M.\ Borman, Y.\ Eliashberg and E.\ Murphy,
{\em Existence and classification of overtwisted contact structures in all dimensions}, 
Acta Math.\ {\bf 215} (2015), no.\ 2, 281--361.

\bibitem[C]{C}
E.\ Cartan,
{\em Sur quelques quadratures dont l'\'el\'ement diff\'erentiel contient des fonctions arbitraires}, 
Bull.\ Soc.\ Math.\ France {\bf 29} (1901), 118--130.

\bibitem[CPP]{CPP}
R.\ Casals, \'A.\ del Pino and F.\ Presas,
{\em Loose Engel structures}, 
Compositio Math.\ {\bf 156} (2020), 412--434.

\bibitem[E]{E}
Y.\ Eliashberg,
{\em Classification of overtwisted contact structures
on 3-manifolds}, 
Invent.\ Math.\ {\bf 98} (1989), no.\ 3, 623--637.

\bibitem[EM]{EM}
Y.\ Eliashberg and N.\ Mishachev,
{\em Introduction to the h-Principle},
Grad.\ Studies in Math.\ {\bf 48},
Amer.\ Math.\ Soc., Providence, 2002. 

\bibitem[FK]{FK}
M.\ Freedman and R.\ Kirby,
{\em A geometric proof of Rochlin's Theorem}, 
Proc.~Symp.~Pure Math.~{\bf 32} (1978), 85--97.

\bibitem[Gi]{G}
E.\ Giroux,
{\em Convexit\' e en topologie de contact}, 
Comm.~Math.~Helv.~{\bf 66} (1991), 637--677.

\bibitem[Go]{Ann}
R.\ Gompf,
{\em Handlebody construction of Stein surfaces},
Ann.\ Math.\ (2) {\bf148} (1998), 619--693.

\bibitem[Gray]{Gray}
J.\ W.\ Gray,
{\em Some global properties of contact structures},
Ann.\ Math.\ (2) {\bf 69} (1959), 421--450.

\bibitem[Gro]{Gr}
M.\ Gromov,
{\em Partial Differential Relations},
Ergebnisse der Mathematik und ihrer Grenzgebiete (3), Springer--Verlag,
Berlin, 1986.

\bibitem[H]{H}
K.\ Honda,
{\em On the classification of tight contact structures I}, 
Geom.\ Topol.\ {\bf 4} (2000), 309--368.

\bibitem[K]{K}
M.~Kegel,
{\em Non-isotopic transverse tori in Engel manifolds},
Rev.\ Mat.\ Iberoam., {\bf 40} (2024), no.~1, 43--56.

\bibitem[KM]{KM}
P.\ Kronheimer and T.\ Mrowka,
{\em Witten's conjecture and property P},
Geom.\ Topol.\ {\bf 8} (2004) 295--310.

\bibitem[McD]{McD}
D.\ McDuff,
{\em Applications of convex integration to symplectic and contact geometry},
Ann.\ Inst.\ Fourier (Grenoble) {\bf 37}
(1987), no.\ 1, 107--133.

\bibitem[M]{M}
J.\ Montesinos,
{\em On twins in the 4-sphere I},
Quart.\ J.\ Math.\ Oxford, Ser.\ (2) {\bf 34} (1983) No.\ 134, 171--199.

\bibitem[OS]{OS}
B.\ Ozbacgi and A.\ Stipsicz,
{\em Surgery on Contact 3-Manifolds and Stein Surfaces},
Bolyai Soc.\ Math.\ Studies {\bf 13}, 
Springer--Verlag Berlin, Heidelberg, 2004.

\bibitem[PP]{PP}
\'A.\ del Pino and F.\ Presas,
{\em Flexibility for tangent and transverse immersions in Engel manifolds}, 
Rev.~Mat.~Comp.\ {\bf 32(1)} (2019), 215--238.

\bibitem[PV]{PV}
 Á.~del Pino and T.~Vogel
{\em The Engel--Lutz twist and overtwisted Engel structures},
Geom.\ Topol.\ {\bf 24} (2020) 2471--2546.

\bibitem[Po]{Po}
L.\ Pontryagin,
{\em A classification of mappings of the three-dimensional complex into the two dimensional sphere},
Matematicheskii Sbornik {\bf 9(51)} (1941),  331--363.

\bibitem[Pr]{P}
F.~Presas, 
{\em Non-integrable distributions and the h-principle},
Eur.\ Math.\ Soc.\ Newsl.\ {\bf 99} (2016), 18--26.

\bibitem[R]{R}
V.~Rokhlin, 
{\em Proof of Gudkov's hypothesis},
Functional Anal.~Appl.~{\bf 6} (1972), 136--138.

\end{thebibliography}
\end{document}